\newtheorem{Th}{Theorem}[section]
\newtheorem{Le}[Th]{Lemma}
\newtheorem{Cj}[Th]{Conjecture}
\newtheorem{Cr}[Th]{Corollary}
\newtheorem{Prop}[Th]{Proposition}
\numberwithin{equation}{section}
\theoremstyle{remark}
\newtheorem{Rm}[Th]{Remark}
\begin{document}

\title{Galois invariants of $K_{1}$-groups of Iwasawa algebras.}

 \author{Dmitriy Izychev }
\address{Universit\"{a}t Heidelberg\\ Mathematisches Institut\\
Im Neuenheimer Feld 288\\ \newline 69120 Heidelberg, Germany.} \email{dizychev@mathi.uni-heidelberg.de}

\author{Otmar Venjakob}%
\address{Universit\"{a}t Heidelberg\\ Mathematisches Institut\\
Im Neuenheimer Feld 288\\ \newline 69120 Heidelberg, Germany.} \email{venjakob@mathi.uni-heidelberg.de}
\urladdr{http://www.mathi.uni-heidelberg.de/\textasciitilde otmar/}
\thanks{We acknowledge support  by the DAAD, ERC and
DFG}


 \subjclass[2010]{19B28, 11S23}

\keywords{ algebraic K-group, group algebra,  Iwasawa algebra, descent, Whitehead group}%

\date{\today}%
\maketitle \thispagestyle{empty}


\section*{Introduction}

\quad Let $S$ be an arbitrary ring with unit, then $K_{1}(S)$ is defined by the exact
sequence
\begin{equation*}
1 \rightarrow  E(S) \rightarrow  GL(S)\rightarrow K_{1}(S)\rightarrow 1,
\end{equation*}
where $ GL(S)=\underset{\rightarrow}{\lim}\; GL_{n}(S)$ denotes the general linear group
of $S$ and $ E(S)$ denotes the subgroup of elementary matrices over $S$.

In \cite[Rem.\ 3.4.6]{FK} Fukaya and Kato formulate the following expectation, which plays an
important role in the definition of so called $\varepsilon$-isomorphisms concerning the local
Iwasawa theory of $\varepsilon$-constants: For an {\it adic ring} $\Lambda$ in the sense of 1.4.1 in (loc.\ cit.) the sequence
\[\xymatrix@C=0.5cm{
  0 \ar[r] & K_1(\Lambda) \ar[rr]^{ } && K_1(\tilde{\Lambda})  \ar[rr]^{1-\varphi_*} && K_1(\tilde{\Lambda}) \ar[r] & 0 }\]
 should be exact, where
 $\tilde{\Lambda}:=\widehat{\mathbb{Z}_p^{ur}}\hat{\otimes}_{\mathbb{Z}_p}\Lambda$ denotes the
 completed tensor product
 and $\varphi$ denotes the Frobenius morphism acting on the ring of integers $\widehat{\mathbb{Z}_p^{ur}}$ of the $p$-adic completion $\widehat{\mathbb{Q}^{ur}_{p}}$ of the maximal unramified extension of
$\mathbb{Q}_{p}.$ For any finite group $G$ and $\Lambda:=\mathbb{Z}_{p}[G]$ this amounts to the
    statement
\begin{equation*}
i_{*}:K_{1}(\mathbb{Z}_{p}[G])\cong K_{1}(\widehat{\mathbb{Z}_p^{ur}}[G])^{\varphi=id},
\end{equation*} where $i_{*}$ is induced by the inclusion
$i:\mathbb{Z}_{p}\rightarrow\widehat{\mathbb{Z}_p^{ur}}$, and the Frobenius map $\varphi$     acts
coefficientwise on $\widehat{\mathbb{Z}_p^{ur}}[G]$  and hence on the $K_{1}$-group.

The original motivation  of this note was to show Fukaya and Kato's expectation in this specific
case. A  more general question  would rather be whether the following statement
\begin{equation}  i_{*}:K_{1}(S^{\Delta}[G])\cong K_{1}(S[G])^{\Delta}, \label{eq:1}
\end{equation}
holds whenever  $S$ is a ring and $\Delta$ is a group acting on $S$ by ring automorphisms. 
But surprisingly, it turns out, that neither of the above statements does   hold in general (see
3.2). In this paper we restrict our attention to the case, where $\Delta$ is the Galois group of
 some   algebraic field extension related  to the extension of a specific class of rings $S/S^{\Delta}$ contained in the completion $\mathbb{C}_p$ of $\overline{\mathbb{Q}_p}$. We  obtain partial results
toward (a corrected version of) \eqref{eq:1}, see Theorem \ref{prop:4}. In particular, we show that in general the following sequence is exact
 \[\xymatrix@C=0.5cm{
  1 \ar[r] & SK_1(\mathbb{Z}_p[G]) \ar[rr]^{ } && K_1(\mathbb{Z}_p[G]) \ar[rr]^{i_* } && K_{1}(\widehat{\mathbb{Z}_p^{ur}}[G])^{\varphi=id}\ar[rr]^{ } && 1
  }\] and induces  an isomorphism for the rational $K$-groups \[K_1(\mathbb{Z}_p[G])_\mathbb{Q} \cong K_1(\widehat{\mathbb{Z}_p^{ur}}[G])_\mathbb{Q}^{\varphi=id}.\]

If $S$ is a finite algebraic extension of $\mathbb{Z}_p$ and $SK_1(S[G])=1$, the isomorphism
\eqref{eq:1} reduces to Galois descent of the determinantal image:
\[i_{*}:\mathrm{Det}(S^\Delta[G])\cong \mathrm{Det}(S[G])^\Delta\] as has been proved by M.\ Taylor
in the case where $S$ is unramified. But the case of infinite extensions of $\mathbb{Z}_p$ and
infinite groups $\Delta$ seems not to be covered in the literature, not even by the fairly general
recent treatment \cite{CPT}, where only finite group actions are considered, as was pointed out to
us by M.\ Taylor. Actually one has to check that the techniques of integral group logarithms extend
to this situation, either  by extending Taylor's original definition as pursued in (loc.\ cit.) or
by using Snaith' version in \cite{Sn} - both in the case of $p$-groups- and then use standard
induction techniques to reduce the general case of finite groups to it, as e.g.\ in \cite{Fr.}.
Both approaches work and for the convenience of the reader we show that the methods of \cite{CPT}
extends easily to our setting recalling the main steps of their proof, but noting that for ramified
extensions Snaith's construction might be better adapted.

The reason for the defect in \eqref{eq:1} relies on the surprising vanishing
\[SK_1(\widehat{\mathbb{Z}_p^{ur}}[G])=1\] for {\it all finite groups} $G,$ in particular $SK_1$ - in contrast to the Det-part - does not
have good Galois descent in general, see Corollary \ref{cr:33} for a more precise statement.

The paper is organized as follows: In the first section we recall for the convenience of the reader
Galois descent results for group rings with coefficients in local or global fields using Fr\"{o}hlich's
Hom-description. In the second section, the heart of the paper, we first concentrate on descent
results for the Det-part. In particular we obtain a rather general result not only for finite
groups, but also for compact $p$-adic Lie groups and their Iwasawa algebras, which turns out to be
quite useful in number theory, see   \cite{BV}. Then we deal with the $SK_1$-part recalling and
generalizing results from \cite{O.}. Altogether both parts lead to the desired descent description
for $K_1.$ Finally, in the last section we derive similar descent results over the corresponding
residue class fields.

We are very grateful to Victor Snaith and  Martin Taylor for various discussions and for informing
us about the state of the art regarding the descent problem.

\section{The case of ``local'' and ``number'' fields.}

\quad The goal of this section is to prove the following theorem which is certainly known to
experts but for lack of a reference we treat it here, because it forms the prototype for the
descent results in the integral cases later.

\begin{Th} \label{th:9}
Let $L$ be a finite Galois extension of $\mathbb{Q}_{p}$ and $M$ be either an arbitrary (possibly infinite) Galois extension $M^{0}$ of $\mathbb{Q}_{p}$ or the $p$-adic completion of a Galois extension $M^{0}$ of finite absolute ramification index over $\mathbb{Q}_{p}$, so that $\mathbb{Q}_{p}\subseteq L\subseteq M\subseteq \mathbb{C}_{p}$. Further, let $\Delta=Gal(M^{0}/L)$, such that $M^{\Delta}=L$ (see Remark \ref{rm:27}). Let $\Gamma$ be a finite group. Then
\begin{equation} \label{eq:8}
i_{*}:K_{1}(L[\Gamma])\cong K_{1}(M[\Gamma])^{\Delta},
\end{equation}
where $\Delta$ acts on the $K_{1}$-group coefficientwise.
\end{Th}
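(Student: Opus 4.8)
The plan is to reduce the statement to Fröhlich's Hom-description of $K_{1}$ of a group ring over a $p$-adic field, under which the assertion \eqref{eq:8} becomes an essentially formal identity between equivariant homomorphism groups. Fix an algebraic closure $\overline{\mathbb{Q}_{p}}$ inside $\mathbb{C}_{p}$, let $E=\mathbb{Q}_{p}(\mu_{e})\subseteq\overline{\mathbb{Q}_{p}}$ with $e$ the exponent of $\Gamma$, and let $R_{\Gamma}$ denote the additive group of $\overline{\mathbb{Q}_{p}}$-valued virtual characters of $\Gamma$. Since every such character takes values in $E$, the absolute Galois group $\Omega:=\mathrm{Gal}(\overline{\mathbb{Q}_{p}}/\mathbb{Q}_{p})$ acts on $R_{\Gamma}$ through the finite quotient $\mathrm{Gal}(E/\mathbb{Q}_{p})$. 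For a field $F$ with $\mathbb{Q}_{p}\subseteq F\subseteq\overline{\mathbb{Q}_{p}}$, writing $\Omega_{F}:=\mathrm{Gal}(\overline{\mathbb{Q}_{p}}/F)$, the reduced norm (the coefficientwise determinant on the Wedderburn components of $F[\Gamma]$) should induce an isomorphism
\[
\mathrm{Det}_{F}:K_{1}(F[\Gamma])\xrightarrow{\ \sim\ }\mathrm{Hom}_{\Omega_{F}}(R_{\Gamma},\overline{\mathbb{Q}_{p}}^{\times}),
\]
onto the group of $\Omega_{F}$-equivariant homomorphisms.

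First I would recall why $\mathrm{Det}_{F}$ is an isomorphism for $F$ finite over $\mathbb{Q}_{p}$: the algebra $F[\Gamma]$ is semisimple, so $K_{1}(F[\Gamma])$ is the product of the $K_{1}(D_{j})$ of the division rings $D_{j}$ underlying its simple factors $M_{n_{j}}(D_{j})$; over a $p$-adic field the reduced norm is injective on each $K_{1}(D_{j})$, so $SK_{1}(F[\Gamma])=1$, and it is surjective onto the units of the respective center. The equivariance condition $f(\sigma\chi)=\sigma f(\chi)$ precisely records the field of definition of each component, so the image of $\mathrm{Det}_{F}$ is exactly the full equivariant Hom-group. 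The two further inputs I need are that $\mathrm{Det}_{F}$ is \emph{natural} in $F$ and \emph{Galois-equivariant}: for $F\subseteq F'$ the square relating $i_{*}$ to restriction along $\Omega_{F'}\hookrightarrow\Omega_{F}$ commutes, and the coefficientwise action of $\sigma\in\mathrm{Gal}(F'/F)$ on $K_{1}(F'[\Gamma])$ corresponds under $\mathrm{Det}_{F'}$ to the action $(\sigma\cdot f)(\chi)=\sigma\,f(\sigma^{-1}\chi)$ on the Hom-group. Both follow from the definition of the reduced norm, since $\sigma$ permutes the Wedderburn components and acts on character values.

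Granting this, the finite case $[M:L]<\infty$ is immediate. With $\Omega_{M}\triangleleft\Omega_{L}$ and $\Omega_{L}/\Omega_{M}\cong\Delta$ one has the elementary identity
\[
\mathrm{Hom}_{\Omega_{M}}(R_{\Gamma},\overline{\mathbb{Q}_{p}}^{\times})^{\Delta}=\mathrm{Hom}_{\Omega_{L}}(R_{\Gamma},\overline{\mathbb{Q}_{p}}^{\times}),
\]
valid for equivariant homomorphism groups whenever the subgroup is normal with quotient $\Delta$; combined with $\mathrm{Det}_{L}$, $\mathrm{Det}_{M}$ and their equivariance this yields \eqref{eq:8}. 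For an infinite algebraic extension $M=M^{0}=\bigcup_{i}L_{i}$ (the $L_{i}/L$ finite Galois) I would pass to the limit: $K_{1}$ commutes with the filtered colimit $M^{0}[\Gamma]=\varinjlim_{i}L_{i}[\Gamma]$, and correspondingly $\mathrm{Hom}_{\Omega_{M^{0}}}(R_{\Gamma},-)=\varinjlim_{i}\mathrm{Hom}_{\Omega_{L_{i}}}(R_{\Gamma},-)$ because $R_{\Gamma}$ is finitely generated with $\Omega$-action factoring through $\mathrm{Gal}(E/\mathbb{Q}_{p})$; taking $\Delta=\varprojlim_{i}\mathrm{Gal}(L_{i}/L)$-invariants and using continuity reduces to the finite identity. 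For the completion case $M=\widehat{M^{0}}$ I would invoke the invariance of the absolute Galois group under $p$-adic completion (Ax--Sen--Tate / Krasner), giving $\mathrm{Gal}(M^{\mathrm{sep}}/M)\cong\Omega_{M^{0}}$, together with the fact that character values remain in $E\subseteq\overline{\mathbb{Q}_{p}}$, so that the computation is unchanged.

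The main obstacle I expect is not the formal descent identity but the careful construction of $\mathrm{Det}_{F}$ with all the required properties at once: establishing $SK_{1}(F[\Gamma])=1$ together with the exact image of the reduced norm over $p$-adic fields, and---most delicately---verifying that this isomorphism is compatible both with the transition maps $i_{*}$ and with the coefficientwise Galois action, uniformly enough to survive the passage to the infinite and completed coefficient fields. In the completion case one must also check that enlarging the target from $\overline{\mathbb{Q}_{p}}^{\times}$ to $\overline{M}^{\times}$ does not alter the relevant Hom-groups, which again rests on the finiteness of $E/\mathbb{Q}_{p}$ and the identification $M^{\Delta}=L$ recorded in Remark \ref{rm:27}.
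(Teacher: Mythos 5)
Your overall strategy --- Fr\"ohlich's Hom-description $K_{1}(F[\Gamma])\cong\mathrm{Hom}_{\Omega_{F}}(R_{\Gamma},\overline{\mathbb{Q}_{p}}^{\times})$ for finite $F/\mathbb{Q}_{p}$, formal passage to $\Delta$-invariants, direct limits for infinite algebraic $M$, and Ax--Sen--Tate to control values --- is essentially the paper's own route (Proposition \ref{prop:3} and diagram \eqref{eq:11}), and your finite and infinite-algebraic cases are sound. The gap is in the completion case. The completed field $M=\widehat{M^{0}}$ is \emph{not} algebraic over $\mathbb{Q}_{p}$ and is \emph{not} a filtered colimit of finite subextensions, so neither your local-field argument (semisimplicity, $SK_{1}(D)=1$ and surjectivity of the reduced norm over $p$-adic fields) nor your colimit argument applies to $M[\Gamma]$. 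The isomorphism $\mathrm{Gal}(\overline{M}/M)\cong\Omega_{M^{0}}$ from Krasner/Ax--Sen--Tate identifies only the Galois side; it transports no $K_{1}$-information, since $K_{1}(M[\Gamma])$ is simply a different group from $K_{1}(M^{0}[\Gamma])$ (already for trivial $\Gamma$: $M^{\times}\supsetneq (M^{0})^{\times}$), so ``the computation is unchanged'' is not a proof. What is needed, and what your proposal never supplies, is the injectivity of $\mathrm{Det}$ on $K_{1}(M[\Gamma])$, i.e.\ vanishing of $SK_{1}$ of the simple components of $M[\Gamma]$, for $M$ a complete discretely valued field whose residue field is an \emph{infinite} algebraic extension of $\mathbb{F}_{p}$. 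This is Proposition \ref{prop:3}(i) of the paper, resting on results of Queyrut and Nakayama--Matsushima for perfect discretely valued fields, and it is exactly where the hypothesis of finite absolute ramification index enters: it guarantees that $M$ is discretely valued.

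A sharp way to see that your completion step cannot be purely formal: your argument nowhere uses the finite-ramification hypothesis, so it would apply verbatim to $M=\widehat{\mathbb{Q}_{p}(\mu_{p^{\infty}})}$ --- but the authors state in Remark \ref{rm:7} that the theorem is open in precisely that case. Note also that once injectivity over $M$ is available, you should not try to establish the full Hom-description (surjectivity of $\mathrm{Det}_{M}$) over the completed field, which would be a further unproved claim; the paper avoids it by a sandwich argument,
\begin{equation*}
K_{1}(L[\Gamma])\subseteq K_{1}(M[\Gamma])^{\Delta}\hookrightarrow \mathrm{Hom}_{G_{L}}(R_{\Gamma},\overline{M}^{\times})\cong \mathrm{Hom}_{G_{L}}(R_{\Gamma},\overline{L}^{\times})\cong K_{1}(L[\Gamma]),
\end{equation*}
where the middle identification is the Ax--Sen--Tate step you gestured at, and Taylor's Hom-description is invoked only over the finite base field $L$. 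Restructuring your proof this way, with the Queyrut/Nakayama--Matsushima injectivity supplied for the completion, closes the gap.
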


For this we need the following

\begin{Prop} \label{prop:3}
Let $N$ be either an arbitrary (possibly infinite) algebraic extension of $\mathbb{Q}_{p}$ or the completion of an algebraic extension of finite absolute ramification index over $\mathbb{Q}_{p}$. Let $\Gamma$ be a finite group. Then \newline
(i) The map $i_{*}:K_{1}(N[\Gamma])\rightarrow K_{1}(\overline{N}[\Gamma])$ is injective, \newline
(ii) If $N$ is a finite Galois extension of $\mathbb{Q}_{p}$ and $G_{N}=Gal(\overline{N}/N)$ is the absolute Galois group, then
\begin{equation*}
i_{*}:K_{1}(N[\Gamma])\cong K_{1}(\overline{N}[\Gamma])^{G_{N}}.
\end{equation*}
\end{Prop}
\begin{proof}
The first statement is well known for the local fields, i.e. finite extensions of $\mathbb{Q}_{p}$, and more generally for the perfect discrete valued fields (see Proposition 2.8 in \cite{Q.}; Theorem 1 and the Remark after Theorem 2 in \cite{NM}). The infinite algebraic extensions can always be written as a direct limit of their finite subextensions. Since the direct limit is exact on the category of abelian groups and $K_{1}$ commutes with the direct limit (see \cite{R.}, Exercise 2.1.9), (i) is true for infinite algebraic extensions. This completes the proof of (i).

Let $R_{\Gamma}=R_{\Gamma}(\overline{N})$ denote the Grothendieck group of finitely generated
$\overline{N}[\Gamma]$-modules. Alternatively $R_{\Gamma}$ will be viewed as the group of virtual
$\overline{N}$-valued characters of $\Gamma$. Since $\overline{N}$ is algebraically closed,
$R_{\Gamma}$ is a free abelian group on the irreducible characters.

Using the Wedderburn's decomposition of $\overline{N}[\Gamma]$ we get an isomorphism of $G_{N}$-modules
\begin{equation} \label{eq:9}
K_{1}(\overline{N}[\Gamma])\cong \underset{\chi}{\prod}\;\overline{N}^{\times}\cong
\mathrm{Hom}(R_{\Gamma},\overline{N}^{\times}),
\end{equation}
where $\chi$ are irreducible $\overline{N}$-valued characters and the action of $G_{N}$ on the
Hom-group is given by the actions on $R_{\Gamma}$ and on $\overline{N}^{\times}$ in the standard way
\begin{equation*}
f^{g}(\chi)=(f(\chi^{g^{-1}}))^{g},\forall f\in \mathrm{Hom}(R_{\Gamma},\overline{N}^{\times}),\;g\in
G_{N},\;\chi\in R_{\Gamma}.
\end{equation*}

From \cite{T.} (part 1, paragraph 2)  we obtain the corresponding Hom-description for $K_{1}(N[\Gamma])$
\begin{equation} \label{eq:10}
K_{1}(N[\Gamma])\cong \mathrm{Hom}_{G_{N}}(R_{\Gamma},\overline{N}^{\times}).
\end{equation}
Then the second statement is obvious, as $i_{*}$ is a Galois homomorphism.
\end{proof}

\begin{Rm} \label{rm:27}
We fix an embedding $\overline{\mathbb{Q}_{p}}\hookrightarrow \mathbb{C}_{p}$. With the previous notation we have by the theorem of Ax-Sen-Tate
\begin{equation*}
M^{\Delta}=(M^{0})^{\Delta}=L\text{ and }\mathcal{O}_{M}^{\Delta}=(\mathcal{O}_{M^{0}})^{\Delta}=\mathcal{O}_{L}.
\end{equation*}
In the following we will always use this theorem for the case of completions.
\end{Rm}

Now let $L$ and $M$ be as in Theorem \ref{th:9}. We introduce a commutative diagram
\begin{equation} \label{eq:11}
\begin{array}{ccccc}
K_{1}(L[\Gamma]) & \rightarrow & K_{1}(\overline{L}[\Gamma]) & \tilde\rightarrow & \underset{\chi}{\prod}\;\overline{L}^{\times}\\
\Big\downarrow{i_{*}} & & \Big\downarrow & & \Big\downarrow\\
K_{1}(M[\Gamma]) & \rightarrow & K_{1}(\overline{M}[\Gamma]) & \tilde\rightarrow & \underset{\psi}{\prod}\;\overline{M}^{\times},\\
\end{array}
\end{equation}
where $\chi$ and $\psi$ are irreducible characters of $\Gamma$ with values in $\overline{L}^{\times}$ and
$\overline{M}^{\times}$ respectively. The rows are injective by the Proposition \ref{prop:3}. The right
hand side column is injective as $\overline{L}^{\times}\subseteq \overline{M}^{\times}$ and each $\psi$ being a
character of a finite group is the composition of one of the characters $\chi$ with the inclusion
map $i:\;\overline{L}\rightarrow\overline{M}$. It follows, that the left hand side column is also
injective. Note also, that \[R_\Gamma(\overline{L})\cong R_\Gamma(\overline{M}),\] which we take as
an identification henceforth.

Taking invariants under the action of $\Delta$, which is a left exact functor, we obtain the first inclusion of Theorem \ref{th:9}
\begin{equation} \label{eq:12}
K_{1}(L[\Gamma])\subseteq(K_{1}(M[\Gamma]))^{\Delta}.
\end{equation}

On the other hand, from the diagram \eqref{eq:11} and the isomorphism \eqref{eq:9} we deduce
\begin{equation*}
(K_{1}(M[\Gamma]))^{\Delta}=(K_{1}(M[\Gamma]))^{G_{L}}\subseteq (K_{1}(\overline{M}[\Gamma]))^{G_{L}}\cong
\end{equation*}
\begin{equation*}
\mathrm{Hom}_{{G_{L}}}(R_{\Gamma}(\overline{M}),\overline{M}^{\times})\cong
\mathrm{Hom}_{{G_{L}}}(R_{\Gamma}(\overline{L}),\overline{L}^{\times}).
\end{equation*}
The last isomorphism is given by Remark \ref{rm:27}.

By the Proposition \ref{prop:3} (ii) the last Hom-group is equal to $K_{1}(L[\Gamma])$, thus we get the second inclusion of Theorem \ref{th:9}, and hence the isomorphism \eqref{eq:8}.

\begin{Rm} \label{rm:7}
Unfortunately we cannot prove Theorem \ref{th:9} in full generality, i.e., for $M$ being the
completion of an arbitrary Galois extension of $\mathbb{Q}_{p}$. For instance, it is not known to
us, whether the theorem holds for $M$ being the completion of the infinite purely ramified
extension $\mathbb{Q}_{p}(\mu_{p^{\infty}})$ of $\mathbb{Q}_{p}$.
\end{Rm}

\begin{Rm}
The proof above also works for ``number'' fields, i.e. algebraic (possible infinite) extensions of
$\mathbb{Q}$. We just have to replace $\mathbb{Q}_{p}$ by $\mathbb{Q}$ in Theorem \ref{th:9}. Then,
letting $L$ be a finite Galois extension of $\mathbb{Q}$ and M be an arbitrary (possible infinite)
Galois extension of $\mathbb{Q}$, we follow the proof of Theorem \ref{th:9} using the same
arguments and references to get \begin{equation*}   i_{*}:K_{1}(L[\Gamma])\cong
K_{1}(M[\Gamma])^{\Delta}.
\end{equation*} The only difference is, that the elements of $\mathrm{Hom}$-groups in the proof are to be\textit{} totally positive on all symplectic representations.
\end{Rm}

\section{The case of rings of integers of ``local'' fields.}

\quad Let $G$ be a finite group. If $S$ is an integral domain of characteristic zero with field of
fractions $L$, then $\overline{L}$ will denote a chosen algebraic closure of $L$. We have a map
induced by base extension $\overline{L}\otimes_{S}-$
\begin{equation*}
\mathrm{Det}:\;K_{1}(S[G])\rightarrow
K_{1}(\overline{L}[G])=\underset{\chi}{\prod}\overline{L}^{\times}\cong \mathrm{Hom}(R_{G
},\overline{L}^{\times}),
\end{equation*}
where the direct product extends over the irreducible $\overline{L}$-valued characters of $G$.
Since the map from $K_{1}(L[G])$ to $K_{1}(\overline{L}[G])$ induced by $\overline{L}\otimes_{L}-$
is injective (see Proposition \ref{prop:3} (i)), the Det-map factorizes over $K_{1}(L[G])$. We
write $ SK_{1}(S[G])$ for $\mathrm{Ker}(\mathrm{Det})$, so that we have an exact sequence
\begin{equation}
1\rightarrow  SK_{1}(S[G])\rightarrow K_{1}(S[G])\rightarrow \mathrm{Det}(K_{1}(S[G]))\rightarrow 1. \label{eq:2}
\end{equation}

Therefore we shall consider the two parts of $K_{1}$, namely the Det-part and the $ SK_{1}$-part, separately.

\begin{Rm} \label{rm:17}
From the exact sequence \eqref{eq:2} and the fact, that $K_{1}$ commutes with direct limits (see \cite{R.}, Exercise 2.1.9), we deduce, that Det and $ SK_{1}$ also commute with direct limits.
\end{Rm}

\subsection{The $\mathrm{Det}$-part.}

\quad We keep the notation of the introduction. In this subsection let $S=\mathcal{O}_{L}$,
 where $L$ is either an arbitrary (possible infinite) Galois extension $L^{0}$ of finite absolute ramification index over $\mathbb{Q}_{p}$ or the completion of such $L^{0}$. Then $S$ is a Noetherian local ring, i.e. $S$ has a unique maximal left ideal, and $S[G]$ is semilocal, i.e. the quotient $S[G]/rad(S[G])$ of the ring by its Jacobson radical is left Artinian (see Proposition 20.6 in \cite{L.}). We have the
 following

\begin{Prop} \label{th:1}
Let $\Lambda$ be a semilocal ring (for example $S[G]$). The maps
\begin{equation*}
\Lambda^{\times}= GL_{1}(\Lambda)\hookrightarrow GL(\Lambda)\twoheadrightarrow K_{1}(\Lambda)
\end{equation*}
induce an equality
\begin{equation*}
\mathrm{Det}(\Lambda^{\times})=\mathrm{Det}( GL(\Lambda))=\mathrm{Det}(K_{1}(\Lambda)).
\end{equation*}
\end{Prop}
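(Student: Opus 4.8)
The plan is to reduce both asserted equalities to the single statement that the canonical map $\Lambda^{\times}=GL_{1}(\Lambda)\to K_{1}(\Lambda)$ is surjective. First I would observe that $\mathrm{Det}(GL(\Lambda))=\mathrm{Det}(K_{1}(\Lambda))$ is essentially formal: by construction $\mathrm{Det}$ is a homomorphism that is trivial on the elementary subgroup $E(\Lambda)$, so it factors through the quotient $K_{1}(\Lambda)=GL(\Lambda)/E(\Lambda)$, and since $GL(\Lambda)\twoheadrightarrow K_{1}(\Lambda)$ is surjective the two images literally coincide. For the remaining equality $\mathrm{Det}(\Lambda^{\times})=\mathrm{Det}(GL(\Lambda))$ it then suffices to show that the image of $GL_{1}(\Lambda)$ in $K_{1}(\Lambda)$ already exhausts $K_{1}(\Lambda)$: once this is known, applying $\mathrm{Det}$ yields $\mathrm{Det}(\Lambda^{\times})=\mathrm{Det}(K_{1}(\Lambda))=\mathrm{Det}(GL(\Lambda))$.

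The key input is that a semilocal ring has \emph{stable rank} $1$, i.e.\ for any $a,b\in\Lambda$ with $\Lambda a+\Lambda b=\Lambda$ there exists $t\in\Lambda$ with $a+tb\in\Lambda^{\times}$. Granting this, I would run the standard matrix reduction. Given $A\in GL_{n}(\Lambda)$ with $n\geq 2$, its last row is unimodular, so stable rank $1$ lets me use elementary column operations to make the bottom-right entry a unit; clearing the last row and column by further elementary operations brings $A$ into block-diagonal shape $\mathrm{diag}(A',u)$ with $A'\in GL_{n-1}(\Lambda)$ and $u\in\Lambda^{\times}$, modulo $E_{n}(\Lambda)$. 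Inducting downward expresses the class of $A$ in $K_{1}(\Lambda)$ as that of a diagonal matrix $\mathrm{diag}(u_{1},\dots,u_{n})$ with $u_{i}\in\Lambda^{\times}$, and Whitehead's lemma (giving $\mathrm{diag}(v,v^{-1})\in E_{2}(\Lambda)$) collapses this to the class of the single unit $u_{1}\cdots u_{n}\in GL_{1}(\Lambda)$. Hence $GL_{1}(\Lambda)\to K_{1}(\Lambda)$ is onto, which completes the proof.

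The main obstacle is the stable rank $1$ property together with that first reduction step, since everything after it is routine Whitehead calculus. I would establish stable rank $1$ by passing to $\overline{\Lambda}:=\Lambda/\mathrm{rad}(\Lambda)$, which is semisimple Artinian, hence a finite product of matrix rings over division rings and manifestly of stable rank $1$, and then lifting the required relation along the surjection $\Lambda^{\times}\to\overline{\Lambda}^{\times}$ (units lift because $1+\mathrm{rad}(\Lambda)\subseteq\Lambda^{\times}$). An alternative route, bypassing stable rank, is to prove $GL_{1}\to K_{1}$ surjective directly over $\overline{\Lambda}$ via the Dieudonn\'e determinant and then lift through $GL_{n}(\Lambda)\twoheadrightarrow GL_{n}(\overline{\Lambda})$; this variant, however, requires separately controlling the congruence subgroup $1+M_{n}(\mathrm{rad}(\Lambda))$, which is precisely the point the stable rank argument absorbs automatically. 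In any event the statement is classical, and for the final write-up one may simply cite Bass or Milnor; I have only sketched the argument here for completeness.
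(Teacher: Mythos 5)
Your proposal is correct. For comparison: the paper does not give an argument at all here --- its entire proof is the citation ``See Theorem 40.31 in \cite{CR. 2}'', i.e.\ it invokes the classical theorem of Bass (as presented in Curtis--Reiner) that for a semilocal ring the natural map $\Lambda^{\times}\to K_{1}(\Lambda)$ is surjective. What you have written is exactly the standard proof of that cited result: the formal observation that $\mathrm{Det}$ factors through $K_{1}(\Lambda)$, the reduction of both equalities to surjectivity of $GL_{1}(\Lambda)\to K_{1}(\Lambda)$, stable rank $1$ for semilocal rings obtained by passing to the semisimple Artinian quotient $\Lambda/\mathrm{rad}(\Lambda)$ and lifting units along $1+\mathrm{rad}(\Lambda)\subseteq\Lambda^{\times}$, and then the usual column reduction plus Whitehead's lemma to collapse any class in $K_{1}(\Lambda)$ to that of a single unit. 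All of these steps are sound (the only point requiring care, the left/right conventions in the unimodular-row reduction over a noncommutative ring, is routine). So in substance you and the paper rely on the same classical theorem; the difference is only that you unpack its proof while the paper outsources it, and indeed your closing remark that one may simply cite Bass or Curtis--Reiner is precisely what the authors do.
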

\begin{proof}
See Theorem 40.31 in \cite{CR. 2}.
\end{proof}

\begin{Cj} \label{cj:1}
Let $S=\mathcal{O}_{L}$ and $G$ be as above. Let $\Delta$ be an open subgroup of $Gal(L^{0}/\mathbb{Q}_{p})$ acting coefficientwise on $S[G]$, and hence on Det-groups. Then
\begin{equation*}
i_{*}:\mathrm{Det}(S^{\Delta}[G]^{\times})\cong\mathrm{Det}(S[G]^{\times})^{\Delta}.
\end{equation*}
\end{Cj}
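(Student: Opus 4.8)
The plan is to follow the strategy of Theorem \ref{th:9} as closely as possible, separating a formal descent over the fields from an integrality comparison which is the genuinely new ingredient. Write $F=(L^{0})^{\Delta}$; since $\Delta$ is open in $\mathrm{Gal}(L^{0}/\mathbb{Q}_{p})$, $F$ is a finite extension of $\mathbb{Q}_{p}$, and by Ax--Sen--Tate (Remark \ref{rm:27}) one has $S^{\Delta}=\mathcal{O}_{F}$. The inclusion $\mathcal{O}_{F}[G]\hookrightarrow\mathcal{O}_{L}[G]$ is $\Delta$-equivariant, so $i_{*}$ lands in the invariants and yields the easy inclusion $\mathrm{Det}(\mathcal{O}_{F}[G]^{\times})\subseteq\mathrm{Det}(\mathcal{O}_{L}[G]^{\times})^{\Delta}$; injectivity of $i_{*}$ follows by restricting the diagram \eqref{eq:11} to the determinantal images inside $\mathrm{Hom}(R_{G},\overline{L}^{\times})$. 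It remains to prove the reverse inclusion.

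First I would pass to the fields. Regarding all determinantal images as subgroups of $\mathrm{Hom}(R_{G},\overline{L}^{\times})$, functoriality gives $\mathrm{Det}(\mathcal{O}_{L}[G]^{\times})\subseteq\mathrm{Det}(L[G]^{\times})$, and by Proposition \ref{th:1} together with the injectivity of $\mathrm{Det}$ on $K_{1}(L[G])$ (Proposition \ref{prop:3}(i)) one has $\mathrm{Det}(L[G]^{\times})=\mathrm{Det}(K_{1}(L[G]))\cong K_{1}(L[G])$. Hence the field-level descent of Theorem \ref{th:9} (or rather its proof, which only uses the Hom-description and the identification of $\Delta$-invariants with $G_{F}$-invariants) shows that any $x\in\mathrm{Det}(\mathcal{O}_{L}[G]^{\times})^{\Delta}$ already lies in $\mathrm{Det}(F[G]^{\times})$. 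Thus the whole problem collapses to the \emph{integrality comparison}
\begin{equation*}
\mathrm{Det}(\mathcal{O}_{F}[G]^{\times})=\mathrm{Det}(F[G]^{\times})\cap\mathrm{Det}(\mathcal{O}_{L}[G]^{\times}),
\end{equation*}
i.e.\ to showing that a determinant which is simultaneously $F$-rational and $\mathcal{O}_{L}$-integral is already $\mathcal{O}_{F}$-integral.

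To prove this comparison I would invoke Fr\"ohlich's Hom-description of the determinantal image, which cuts out $\mathrm{Det}(\mathcal{O}[G]^{\times})$ among the Galois-equivariant homomorphisms $R_{G}\to\overline{L}^{\times}$ with integral values by explicit Galois-stable congruence conditions, linearised by the integral group logarithm of Taylor and Oliver (as recalled and extended in \cite{CPT}, respectively \cite{Sn}). By standard hyperelementary induction, as in \cite{Fr.}, it suffices to treat $p$-groups $G$; there the logarithm equivariantly identifies $\mathrm{Det}(\mathcal{O}[G]^{\times})$, modulo torsion coming from roots of unity which is treated separately, with an explicit lattice of congruences, and for additive modules the comparison of the $F$-rational integral part with the $\mathcal{O}_{L}$-integral part is immediate. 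For the algebraic case $L=L^{0}$ this can be organised still more directly: writing $L^{0}=\bigcup_{i}L_{i}$ as a union of finite Galois subextensions containing $F$, Remark \ref{rm:17} gives $\mathrm{Det}(\mathcal{O}_{L^{0}}[G]^{\times})=\varinjlim_{i}\mathrm{Det}(\mathcal{O}_{L_{i}}[G]^{\times})$ with $\Delta$ acting on the $i$-th term through the finite group $\Delta_{i}=\mathrm{Gal}(L_{i}/F)$; because each $\Delta_{i}$ is finite and the transition maps are $\Delta$-equivariant, an invariant class can be represented at a level $j$ where it is genuinely $\Delta_{j}$-fixed, and the finite-extension case then delivers it from $\mathrm{Det}(\mathcal{O}_{F}[G]^{\times})$.

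The main obstacle is to make the integral group logarithm and the resulting congruence description available beyond the classical finite-extension setting, in particular for the completion case $L=\widehat{L^{0}}$, where $\mathcal{O}_{L}$ is no longer a filtered union of the $\mathcal{O}_{L_{i}}$ and the direct-limit shortcut fails. Here one must verify that the logarithm converges on $\mathcal{O}_{\widehat{L^{0}}}[G]^{\times}$, that it remains $\Delta$-equivariant, and above all that the defining congruences are preserved under the (possibly infinite) group $\Delta$ and under completion, so that taking $\Delta$-invariants produces no determinants beyond those already integral over $\mathcal{O}_{F}$. Concretely, I would reduce the completed statement to the algebraic one by a density and continuity argument for the logarithm, using that $L^{0}$ has finite absolute ramification index, so that $\mathcal{O}_{L^{0}}$ is $p$-adically dense in $\mathcal{O}_{\widehat{L^{0}}}$. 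This technical verification, the analogue for the determinantal congruences of the clean Hom-description that made descent automatic in Theorem \ref{th:9}, is where the real work lies.
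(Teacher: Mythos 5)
Your overall architecture is the same as the paper's: the easy inclusion and the injectivity of $i_{*}$ via the embedding into $K_{1}(L[G])$, the use of Theorem \ref{th:9} to reduce the conjecture to the integrality statement $\mathrm{Det}(\mathcal{O}_{F}[G]^{\times})=\mathrm{Det}(F[G]^{\times})\cap\mathrm{Det}(\mathcal{O}_{L}[G]^{\times})$, and then the integral group logarithm for $p$-groups plus hyperelementary induction for arbitrary finite $G$ --- this is exactly Step 1 (Theorem \ref{th:3}, after \cite{CPT}) and Step 2 (Theorems \ref{th:4}--\ref{th:8}). The genuine gap is the point where you declare the comparison ``immediate'' after linearisation. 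The integral logarithm, and with it the congruence/lattice description of $\mathrm{Det}(\mathcal{O}[G]^{\times})$, exists only when the coefficient ring carries a lift of Frobenius (conditions (i)--(iii) and axioms (1)--(4) of \ref{axiomatic}). Such a lift exists for unramified coefficients $W(\kappa)$, and the theory extends to tamely ramified $S$ provided $\Delta$ contains the inertia group (Remark \ref{rm:4}, via p.\ 92 of \cite{T.}); for wildly ramified $S$ it is simply not available. This is not a technical verification one can defer: it is precisely why the statement is a Conjecture in the paper, proved only as Theorem \ref{th:34} (tame $S$, $\Delta$ containing inertia), and why even the field-level descent is unknown for the completion of $\mathbb{Q}_{p}(\mu_{p^{\infty}})$ (Remark \ref{rm:7}). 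As written, your proposal overclaims: at best it reproves Theorem \ref{th:34}, and it nowhere records the restrictions on $S$ and on $\Delta$ under which its key step can actually be executed.

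There is a second, independent flaw in how you order the two types of $L$. The machinery of \cite{CPT} has $p$-adic completeness as one of its hypotheses, so the cases it treats directly are the finite extensions and the completions; the infinite algebraic extensions, which are not complete, are the ones reached afterwards by direct limits (Remark \ref{rm:5}). You invert this: you do the algebraic case by limits (fine) and then propose to deduce the completion case from the algebraic one by ``density and continuity''. That argument does not close: given $x\in\mathrm{Det}(\mathcal{O}_{\widehat{L^{0}}}[G]^{\times})^{\Delta}$ written as a limit of $x_{n}\in\mathrm{Det}(\mathcal{O}_{L^{0}}[G]^{\times})$, nothing allows you to replace the $x_{n}$ by $\Delta$-invariant elements; the assertion that taking closures commutes with taking $\Delta$-invariants of determinant groups is essentially the descent statement you are trying to prove, so the proposed reduction is circular. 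A smaller inaccuracy: hyperelementary induction is not a clean ``reduce to $p$-groups''. The paper needs separate arguments for $\mathbb{Q}$-$p$-elementary groups, via twisted group rings and Brauer-group vanishing (Theorems \ref{th:4} and \ref{th:5}), and for $\mathbb{Q}$-$l$-elementary groups with $l\neq p$, via maximal orders (Proposition \ref{prop:2}, Theorem \ref{th:7}); Serre's induction theorem then controls the quotient $\mathrm{Det}(S[G]^{\times})^{\Delta}/\mathrm{Det}(R[G]^{\times})$ only one prime at a time (Theorems \ref{th:6} and \ref{th:8}).
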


The proof of Conjecture \ref{cj:1} proceeds in two steps. At   present  we can prove step 1 and
thus Conjecture \ref{cj:1} only under further assumptions on $S$ (see Theorem \ref{th:34}). We
first do the proof for finite extensions and completions of infinite extensions, since $S$ is
$p$-adically complete in these cases, and then we generalize the statement to infinite algebraic
extensions using direct limits (see Remark \ref{rm:5}).

\begin{Rm}
The map $i_{*}$ in the conjecture is always a monomorphism, as the following diagram commutes and respects the action of $\Delta$
\begin{equation*}
\begin{array}{ccc}
\mathrm{Det}(S^{\Delta}[G]^{\times}) & \hookrightarrow & K_{1}(L^{\Delta}[G])\\
\Big\downarrow{i_{*}} & & \Big\downarrow i_{*} \\
\mathrm{Det}(S[G]^{\times}) & \hookrightarrow & K_{1}(L[G]),\\
\end{array}
\end{equation*}
and the right hand side map is injective (see section 1).
\end{Rm}

\subsubsection{Step 1. The $p$-group case.}\label{axiomatic}
\quad Let $G$ be a finite $p$-group. For this step we generalize the ideas of \cite{CPT} to the case of an infinite group $\Delta$. We want to do the proof in full generality of \cite{CPT}, thus we have to assume, that $S$ is an arbitrary unitary ring satisfying the following conditions: \newline
(i) $S$ is an integral domain, which is torsion free as an abelian group, \newline
(ii) the natural map $S\rightarrow\underset{\leftarrow}{\lim}\;S/p^{n}S$ is an isomorphism, so that $S$ is $p$-adically complete,\newline
(iii) $S$ supports a lift of Frobenius, that is to say an endomorphism $F:\;S\rightarrow S$ with the property that for all $s\in S$
\begin{equation*}
F(s)\equiv s^{p}\;\mathrm{mod}\;pS.
\end{equation*}

\begin{Rm} \label{rm:1}
Proposition \ref{th:1} holds also for $S$ satisfying the conditions (i)-(iii) above and $G$ being a finite $p$-group.
\end{Rm}
\begin{proof}
See Theorem 1.2 in \cite{CPT}.
\end{proof}

Now we are ready to formulate the main theorem of step 1.

\begin{Th} \label{th:3}
Let $G$ be a finite $p$-group. Let $S$ be an arbitrary unitary ring satisfying the conditions (i)-(iii) and $\Delta$ be a group acting on $S$ by the ring automorphisms, such that $R=S^{\Delta}$ also satisfies the conditions (i)-(iii). We do not suppose, that the lift of Frobenius $F_{R}$ is compatible with the lift of Frobenius $F_{S}$, so that $F_{S}\mid_{R}$ need not equal $F_{R}$. Then we have the isomorphism
\begin{equation*}
i_{*}:\mathrm{Det}(R[G]^{\times})\cong \mathrm{Det}(S[G]^{\times})^{\Delta},
\end{equation*}
where $\Delta$ acts on Det-groups coefficientwise.
\end{Th}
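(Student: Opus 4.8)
The plan is to prove the isomorphism using the theory of integral group logarithms, following the approach of \cite{CPT} but carefully tracking the $\Delta$-action. The key tool is the integral logarithm $\log$, which for a finite $p$-group $G$ and a ring $S$ satisfying (i)--(iii) fits into a commutative diagram relating $\mathrm{Det}(S[G]^\times)$ to the additive group generated by the conjugacy classes of $G$ with coefficients in $S$. More precisely, one has an integral group logarithm
\begin{equation*}
\Gamma_{S}:\mathrm{Det}(S[G]^\times)\longrightarrow S[\mathrm{Conj}(G)],
\end{equation*}
where $S[\mathrm{Conj}(G)]$ denotes the free $S$-module on the set of conjugacy classes of $G$, built out of the ordinary $p$-adic logarithm twisted by the Frobenius lift $F_S$ via the formula $\Gamma_S = \log - \tfrac{1}{p}\psi\circ F_S$ (the integralizing correction), where $\psi$ is the Adams-type operation on conjugacy classes. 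Since $F_S$ is a ring endomorphism with $F_S(s)\equiv s^p \bmod pS$, this $\Gamma_S$ has integral image and its kernel and cokernel are explicitly computable torsion groups depending only on $G$ (and not on $S$, by the universality arguments of \cite{CPT}).

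First I would set up the logarithm $\Gamma_S$ for $S$ and the corresponding $\Gamma_R$ for $R=S^\Delta$, and record the fundamental exact sequence from \cite{CPT} that describes $\mathrm{Det}(S[G]^\times)$ as an extension built from the image of $\Gamma_S$ together with the torsion (the contribution of roots of unity and the cokernel of $1-\tfrac{1}{p}\psi F_S$). The crucial compatibility is that $\Gamma_S$ is \emph{natural} in the coefficient ring with respect to any ring homomorphism, and in particular with respect to the inclusion $i:R\hookrightarrow S$ and with respect to each $\delta\in\Delta$. Because $\Delta$ acts on $S$ by ring automorphisms, it acts compatibly on $S[\mathrm{Conj}(G)]$ (only on the coefficients, fixing the conjugacy-class basis), and the diagram
\begin{equation*}
\begin{array}{ccc}
\mathrm{Det}(S[G]^\times) & \xrightarrow{\ \Gamma_S\ } & S[\mathrm{Conj}(G)]\\
\Big\downarrow{\delta} & & \Big\downarrow{\delta}\\
\mathrm{Det}(S[G]^\times) & \xrightarrow{\ \Gamma_S\ } & S[\mathrm{Conj}(G)]
\end{array}
\end{equation*}
commutes for every $\delta\in\Delta$. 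The subtlety flagged in the statement --- that $F_S|_R$ need not equal $F_R$ --- is exactly what must be handled here: the two logarithms $\Gamma_R$ and $\Gamma_S|_{\mathrm{Det}(R[G]^\times)}$ need \emph{not} agree, because they use different Frobenius lifts. I would resolve this by showing that the difference of the two correction terms, namely $\tfrac{1}{p}(\psi F_S - \psi F_R)$ applied to $\mathrm{Det}(R[G]^\times)$, lands in $p\,R[\mathrm{Conj}(G)]$ and therefore does not affect the relevant computation modulo the torsion, since $F_S(r)\equiv r^p\equiv F_R(r)\bmod pS$ for $r\in R$ forces $F_S(r)-F_R(r)\in pS\cap R$; invoking Remark~\ref{rm:27}/Ax--Sen--Tate-type integrality, or directly $pS\cap R=pR$, one checks the two logarithms induce the same map on the torsion-free quotient.

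Next I would take $\Delta$-invariants of the logarithm diagram. On the target, taking invariants is transparent: $(S[\mathrm{Conj}(G)])^\Delta = S^\Delta[\mathrm{Conj}(G)] = R[\mathrm{Conj}(G)]$, since $\Delta$ acts only on coefficients and $S^\Delta=R$ by hypothesis. Using the snake lemma applied to the $\Gamma$-exact sequences for $R$ and for $S^\Delta$-invariants, together with the earlier established injectivity of $i_*$ (the Remark following Conjecture~\ref{cj:1}, which gives that $i_*:\mathrm{Det}(R[G]^\times)\hookrightarrow\mathrm{Det}(S[G]^\times)^\Delta$ is a monomorphism), the problem reduces to surjectivity. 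For surjectivity I would compare the image of $\Gamma_R$ with the $\Delta$-invariants of the image of $\Gamma_S$: an element of $\mathrm{Det}(S[G]^\times)^\Delta$ maps under $\Gamma_S$ into $R[\mathrm{Conj}(G)]$, and I must lift this invariant logarithm back through $\Gamma_R$, then reconcile the torsion. The torsion part --- the roots of unity $\mu(S)$ in $S^\times$ together with the cokernel of the integralizing operator --- must be shown to descend, i.e. $\mu(S)^\Delta=\mu(R)$ and the $\Delta$-invariants of the cokernel of $1-\tfrac{1}{p}\psi F_S$ agree with the corresponding cokernel for $R$.

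The main obstacle I anticipate is precisely the interaction of the Frobenius discrepancy with the torsion part. The torsion-free computation descends cleanly by the naturality and the linear-algebra of taking invariants of a free module, but the roots-of-unity contribution and the cokernel of the integralizing operator are genuinely sensitive to the choice of Frobenius lift, and these are where $F_S|_R\neq F_R$ could in principle break the descent. I expect to have to prove a \emph{torsion descent lemma}: that the full preimage under $\Gamma_S$ of $R[\mathrm{Conj}(G)]$, intersected with $\mathrm{Det}(S[G]^\times)^\Delta$, is exactly $i_*(\mathrm{Det}(R[G]^\times))$, by an explicit cohomological computation of $H^1$ of $\Delta$ acting on the (finite or profinite) torsion subgroups, or by reducing modulo $p^n$ and passing to the limit using that $S$ and $R$ are $p$-adically complete (condition (ii)). This last reduction, together with the vanishing of the relevant first cohomology of $\Delta$ on the coefficient groups (an integral version of Hilbert 90 / the additive $H^1=0$ for $\mathcal{O}_M^\Delta=\mathcal{O}_L$), is what ultimately forces the second inclusion and hence the isomorphism.
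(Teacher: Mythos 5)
Your overall strategy (the Frobenius-twisted integral group logarithm of \cite{CPT}) is the same family of ideas the paper itself relies on: the paper's proof simply invokes Theorem 4.1 of \cite{CPT}, observing that that proof uses nothing beyond the equalities $(S^{\times})^{\Delta}=R^{\times}$ and $\big(\bigoplus_{i}S e_{i}\big)^{\Delta}=\bigoplus_{i}R e_{i}$ for free modules with coefficientwise action, and hence goes through for infinite $\Delta$. But your write-up has a genuine gap at its central step: you assert that the square intertwining $\Gamma_{S}$ with the action of $\delta\in\Delta$ commutes ``because $\Delta$ acts on $S$ by ring automorphisms.'' This is false in general. The map $\Gamma_{S}$ is built from the chosen lift of Frobenius $F_{S}$, and one has $\delta\circ\Gamma_{S}\circ\delta^{-1}=\Gamma_{S,\delta F_{S}\delta^{-1}}$; nothing in conditions (i)--(iii) forces $F_{S}$ to commute with $\Delta$ (for instance $S=\mathbb{Z}_{p}[[t]]$, $\delta(t)=-t$, $F_{S}(t)=t^{p}+pt^{2}$ gives a valid lift with $\delta F_{S}\delta^{-1}\neq F_{S}$; note also that if $F_{S}$ did commute with $\Delta$ it would restrict to a lift of Frobenius on $R$, making the caveat $F_{S}\mid_{R}\neq F_{R}$ in the statement largely vacuous). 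Equivariance of the logarithm is exactly the delicate point: compare the paper's proof of Theorem \ref{th:4}, where the commutation of $F$ with the acting group $A_{m}$ is explicitly \emph{verified} before $\nu$ is claimed to be a map of $A_{m}$-modules. Without equivariance you cannot conclude that $\Gamma_{S}$ carries $\mathrm{Det}(S[G]^{\times})^{\Delta}$ into $S[\mathrm{Conj}(G)]^{\Delta}=R[\mathrm{Conj}(G)]$, so the ``take $\Delta$-invariants and apply the snake lemma'' machinery never gets off the ground. (Your treatment of the related discrepancy $F_{S}\mid_{R}\neq F_{R}$ also contains a slip: $F_{S}(r)-F_{R}(r)$ lies in $pS$ but need not lie in $R$ at all, so the expression $pS\cap R$ does not apply to it.)

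The second gap is that the crux of surjectivity --- your ``torsion descent lemma'' --- is only announced, not proved, and the tools you propose for it do not exist in this setting: here $S$ is an \emph{arbitrary} unitary ring satisfying (i)--(iii) and $\Delta$ an \emph{arbitrary abstract} group acting by ring automorphisms, so there is no Galois theory, no topology on $\Delta$, and hence no Ax--Sen--Tate, no Hilbert 90, and no continuous-cochain $\mathrm{H}^{1}$ to invoke; moreover your claim that $\ker\Gamma_{S}$ and $\mathrm{coker}\,\Gamma_{S}$ ``depend only on $G$ and not on $S$'' is wrong (they involve $\mu(S)$ and an Artin--Schreier-type cokernel, both genuinely $S$-dependent). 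This is precisely why the argument the paper appeals to is arranged to be cohomology-free: in \cite{CPT} the descent runs through the exact sequence $0\rightarrow\phi(\mathcal{A}(S[G]))\rightarrow\mathrm{Det}(S[G]^{\times})\rightarrow S[G^{ab}]^{\times}\rightarrow 1$ (the paper's axioms (2) and (3)), whose kernel $\mathrm{Det}(1+\mathcal{A}(S[G]))$ is a $\Delta$-stable subgroup defined without any reference to $F_{S}$, and whose outer terms descend by the two elementary equalities above; only this makes the extension to infinite $\Delta$ a triviality, as the paper remarks. Until you give an $F$-independent, cohomology-free treatment of the descent of that kernel part (and of the unit/root-of-unity contributions), the proposal is not a proof of Theorem \ref{th:3} in the stated generality, although it would likely become one in the special case of Remark \ref{rm:2}, where $S=W(\kappa)$ and the Witt-vector Frobenius can be chosen, and does commute with the abelian group $\Delta$.
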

\begin{proof}
Since $\Delta$ acts on $S$ by the ring automorphisms, we have the equality $(S^{\times})^{\Delta}=R^{\times}$ and for any finitely generated free $S$-module $M=\underset{i}{\bigoplus}S e_{i}$, on which $\Delta$ acts coefficientwise, $M^{\Delta}$ is the finitely generated free $R$-module $M^{\Delta}=\underset{i}{\bigoplus}R e_{i}$.

$S$ and $R$ both satisfy the Hypothesis in \cite{CPT}, thus the proof of the theorem is identical with the proof of Theorem 4.1 in \cite{CPT}. Note that this proof does not depend on the condition, whether $\Delta$ is a finite group or not; it only uses the equalities above.
\end{proof}

\begin{Rm} \label{rm:2}
In particular, Theorem \ref{th:3} holds for $S=\mathcal{O}_{L}$, where $L$ is either a finite
unramified extension $L^{0}$ of $\mathbb{Q}_{p}$ or the completion of an infinite unramified
extension $L^{0}$ of $\mathbb{Q}_{p}$ - in other words for the ring of Witt vectors $W(\kappa)$ of
any algebraic extension $\kappa$ of $\mathbb{F}_{p}$ -, and for $\Delta$ being an open subgroup of
$Gal(L^{0}/\mathbb{Q}_{p})$.
\end{Rm}

\subsubsection{Step 2. Reduction of the general case to the $p$-group case.}

\quad In contrast to the first step the second one  can be proved in the full generality of
Conjecture \ref{cj:1} for a given axiomatic setting. So let $S$ and $G$ be as in the conjecture
(for infinite Galois extensions see Remark \ref{rm:5}, so we assume, that $S$ is $p$-adically
complete). Let $\Delta$ be an open subgroup of $Gal(L^{0}/\mathbb{Q}_{p})$, such that
$R=S^{\Delta}$ is the ring of integers of a finite extension of $\mathbb{Q}_{p}$. Then $S$ is a
local, Noetherian, normal ring satisfying
\newline (i) $S$ is an integral domain, which is torsion free as an abelian group, \newline (ii)
the natural map $S\rightarrow\underset{\leftarrow}{\lim}\;S/p^{n}S$ is an isomorphism, so that $S$
is $p$-adically complete, \newline (iii)  $S$ supports a lift of Frobenius, that is to say an
endomorphism $F:\;S\rightarrow S$ with the property that for all $s\in S$
\begin{equation*}
F(s)\equiv s^{p}\;\mathrm{mod}\;\mathfrak{M},
\end{equation*}
where $\mathfrak{M}$ is the maximal ideal of $S$.

Note that with $S$ also  $R$ satisfies (i)-(iii).

Our reduction step is based on the following axioms to be satisfied for  every  finite
$p$-group $G$ ($S$, $R$, $\Delta$ being as above); actually they were essential ingredients of step
1 in the unramified case and  we will show that  Conjecture \ref{cj:1} is true for any class of
rings satisfying them:
\begin{enumerate}
\item There exists a homomorphism $\nu$ defined using the lift of Frobenius on $S$
\begin{equation*}
\nu: \mathrm{Det}(1+I(S[G]))\rightarrow L[\mathcal{C}_{G}],
\end{equation*}
such that $\mathcal{L}=\nu\circ \mathrm{Det}$ (for the definition of $\mathcal{L}$ see pp. 12-13 in \cite{CPT}). Here $I(S[G])$ denotes the augmentation ideal of the group ring $S[G]$ and $\mathcal{C}_{G}$ denotes the set of conjugacy classes of $G$.
\item Let $\nu'$ denote the restriction of the homomorphism $\nu$ to $\mathrm{Det}(1+\mathcal{A}(S[G]))$, where $\mathcal{A}(S[G])$ is the kernel of the natural map from $S[G]$ to $S[G^{ab}]$, then $\nu'$ is an isomorphism
\begin{equation*}
\mathrm{Det}(1+\mathcal{A}(S[G]))\;\tilde{\longrightarrow}\; p\phi(\mathcal{A}(S[G])),
\end{equation*}
and $\phi:L[G]\rightarrow L[\mathcal{C}_{G}]$ denotes the $L$-linear map obtained by mapping each element of $G$ to its conjugacy class.
\item We have the exact sequence
\begin{equation*}
0\rightarrow \phi(\mathcal{A}(S[G]))\xrightarrow{(\nu')^{-1}\circ (p\cdot)} \mathrm{Det}(S[G]^{\times})\rightarrow S[G^{ab}]^{\times}\rightarrow 1.
\end{equation*}
\item We have the isomorphism
\begin{equation*}
i_{*}:\mathrm{Det}(S^{\Delta}[G]^{\times})\cong \mathrm{Det}(S[G]^{\times})^{\Delta},
\end{equation*}
where $\Delta$ acts coefficientwise on Det-groups.
\end{enumerate}
From now we   assume these axioms  and describe  the reduction step in this hypothetical
generality, in the hope to prove Conjecture \ref{cj:1} some day in full generality.

Most ideas and techniques of the proof are contained in \cite{CPT}. Thus, we only have to relax the condition (iii) on the ring in \cite{CPT}. So, let $S$ be as above. The proof will now proceed in different stages, restricting $G$ first to special types of groups.

\begin{flushleft}
\textbf{$\mathbb{Q}$-$p$-elementary groups.}
\end{flushleft}

We begin with an algebraic result which we shall require later on this stage.

Suppose, that $\mathcal{O}_{K}$ is the ring of integers of a finite unramified extension $K$ of $\mathbb{Q}_{p}$. Let $A$ denote the ring $S\otimes_{\mathbb{Z}_{p}}\mathcal{O}_{K}$ and let $M$ be the ring of fractions of $A$. Since $M$ is a separable $L$-algebra, it can be written as a finite product of field extensions $M_{i}$ of $L$:
\begin{equation*}
M=\prod^{n}_{i=1}M_{i}.
\end{equation*}
Since $K$ is a finite unramified extension of $\mathbb{Q}_{p}$, we know that $A$ is etale over $S$ and hence is normal (see e.g. page 27 in \cite{Mi.}). If $A_{i}$ is the normalization of $S$ in $M_{i}$, then
\begin{equation*}
A=\prod^{n}_{i=1}A_{i}.
\end{equation*}
\begin{Le} \label{le:1}
Let $F$ denote the lift of Frobenius on $A$ given by the tensor product of the lift of Frobenius on $S$ with the Frobenius automorphism of $\mathcal{O}_{K}$; then $F(A_{i})\subset A_{i}$.
\end{Le}
\begin{proof}
Let $\left\{e_{i}\right\}$ denote the system of primitive orthogonal idempotents associated to the above product decomposition of $A$. As $F$ is a ${\mathbb{Z}_{p}}$-algebra endomorphism, we know, that $\left\{F(e_{i})\right\}$ is a system of orthogonal idempotents with
\begin{equation*}
1=\sum^{n}_{i=1}F(e_{i})
\end{equation*}
and so this system corresponds to a decomposition of the commutative algebra $A$ into $n$ components. Since the decomposition of Noetherian commutative algebras into indecomposable algebras is unique, we must have $F(e_{i})=e_{\pi(i)}$ for some permutation $\pi$ of $\left\{1,...,n\right\}$. It will suffice to show, that the permutation $\pi$ is the identity. Suppose for contradiction, that for some $i$, we have $\pi(i)=j\neq i$. We know by definition, that
\begin{equation*}
F(e_{i})\equiv e_{i}^{p}=e_{i}\;(\mathrm{mod}\;\mathfrak{M}A)
\end{equation*}
and so
\begin{equation*}
e_{i}\equiv F(e_{i})\cdot e_{i}\equiv e_{j}\cdot e_{i}=0\;(\mathrm{mod}\;\mathfrak{M}A).
\end{equation*}
However, by Theorem 6.7 (page 123) in \cite{CR. 1} we know, that, since $\mathfrak{M}A$ is contained in the radical of $A$, $e_{i}$ mod $\mathfrak{M}A$ must be a primitive idempotent of $A/\mathfrak{M}A$, and so we have the desired contradiction.
\end{proof}

Suppose $G$ is a $\mathbb{Q}$-$p$-elementary group, i.e. $G$ may be written as a semidirect product $C\rtimes P$, where $C$ is a cyclic normal subgroup of order s, which is prime to $p$, and where $P$ is a $p$-group. We decompose the commutative group ring $\mathbb{Z}_{p}[C]$ according as the divisors $m$ of $s$
\begin{equation} \label{eq:3}
\mathbb{Z}_{p}[C]=\prod_{m | s}\mathbb{Z}_{p}[m],
\end{equation}
where $\mathbb{Z}_{p}[m]$ is the semilocal ring
\begin{equation*}
\mathbb{Z}_{p}[m]=\mathbb{Z}[\zeta_{m}]\otimes_{\mathbb{Z}}\mathbb{Z}_{p},
\end{equation*}
and where $\zeta_{m}$ is a primitive $m$th root of unity in $\overline{\mathbb{Q}_{p}}$. For brevity we set $S[m]=S\otimes_{\mathbb{Z}_{p}}\mathbb{Z}_{p}[m]$, and we note, that by Lemma \ref{le:1} $S[m]$ decomposes as a product of rings satisfying (i)-(iii), where the Frobenius is given by the restriction of the tensor product of the lift of Frobenius on $S$ and the Frobenius automorphism of $\mathbb{Z}_{p}[m]$ to each component.

For each $m$ the conjugation action of $P$ on $C$ induces a homomorphism
\begin{equation*}
\alpha_{m}:P\rightarrow\mathrm{Aut}\left\langle \zeta_{m}\right\rangle
\end{equation*}
and we let $H_{m}=\mathrm{Ker}(\alpha_{m})$ and $A_{m}=\mathrm{Im}(\alpha_{m})$.

Tensoring the decomposition \eqref{eq:3} with $-\otimes_{\mathbb{Z}_{p}[C]}S[G]$ affords a
decomposition of $S$-algebras
\begin{equation} \label{eq:4}
S[G]=\prod_{m}S[m]\circ P,
\end{equation}
where $S[m]\circ P$ is the free $S[m]$-module on the set of elements of $P$ with the following multiplication $s_{1}p_{1}\cdot s_{2}p_{2}=s_{1}s_{2}^{p_{1}}p_{1}p_{2}$, $P$ acting on $S[m]$ through $A_{m}$. $S[m]\circ P$ is also called the twisted group ring. We shall study the determinant group $\mathrm{Det}( GL(S[G]))$ by studying the various subgroups $\mathrm{Det}( GL(S[m]\circ P))$. Note that the twisted group ring $S[m]\circ P$ contains the standard group ring $S[m][H_{m}]$. We therefore have the inclusion map $i:S[m][H_{m}]\rightarrow S[m]\circ P$. We also have a restriction map defined by choosing a transversal $\left\{a_{i}\right\}$ of $P/H_{m}$. This induces a restriction homomorphism
\begin{equation*}
\mathrm{res}: GL_{n}(S[m]\circ P)\rightarrow  GL_{n\mid A_{m}\mid}(S[m][H_{m}]).
\end{equation*}
By Proposition \ref{th:1} we know, that $\mathrm{Det}( GL(S[m][H_{m}]))=\mathrm{Det}(S[m][H_{m}]^{\times})$, and so we have defined the composition:
\begin{equation} \label{eq:5}
r_{m}:\mathrm{Det}( GL_{n}(S[m]\circ P))\rightarrow  GL_{n\mid A_{m}\mid}(S[m][H_{m}])\rightarrow \mathrm{Det}(S[m][H_{m}]^{\times}).
\end{equation}
Since for $\pi\in P$, $x\in (S[m]\circ P)^{\times}$, we know, that $\mathrm{Det}(\pi x \pi^{-1})=\mathrm{Det}(x)$, whence
\begin{equation*}
r_{m}:\mathrm{Det}( GL_{n}(S[m]\circ P))\rightarrow \mathrm{Det}(S[m][H_{m}]^{\times})^{A_{m}}.
\end{equation*}
Here $A_{m}$ acts via $\alpha_{m}$ on $S[m]$ and by conjugation on $H_{m}$. From (3.8) on page 69 in \cite{T.} we know, that $r_{m}$ is injective. Note for future reference, that for $x\in S[m][H_{m}]^{\times}$, $i(x)$ is mapped by restriction to the diagonal matrix $\mathrm{diag}(x^{a_{i}})$; thus we write $\mathrm{Det}(x)$ for the usual element of $\mathrm{Det}(S[m][H_{m}]^{\times})$ whereas $\mathrm{Det}(i(x))$ denotes an element of $\mathrm{Det}((S[m]\circ P)^{\times})$. These two determinants are related by the identity
\begin{equation*}
r_{m}(\mathrm{Det}(i(x)))=\prod_{a\in A_{m}}\mathrm{Det}(x^{a})=N_{A_{m}}(\mathrm{Det}(x)).
\end{equation*}
Next we describe $\mathrm{Det}((S[m]\circ P)^{\times})$, and more generally $\mathrm{Det}( GL(S[m]\circ P))$, and the maps $i$ and $r_{m}$ in terms of character functions. In Lemma \ref{le:2} we shall see, that every irreducible character of $G$ may be written in the form $\mathrm{Ind}^{G}_{H_{m}}(\phi_{m})$ for some $m$, where $\phi_{m}$ is an abelian character of $H_{m}$ with the property, that the restriction of $\phi_{m}$ to $C$ has order $m$. With this notation the elements $\mathrm{Det}(i(x))$ in $\mathrm{Det}((S[m]\circ P)^{\times})$ are character functions on such $\mathrm{Ind}^{G}_{H_{m}}(\phi_{m})$ with
\begin{equation*}
\mathrm{Det}(i(x))(\mathrm{Ind}^{G}_{H_{m}}(\phi_{m}))=r_{m}(\mathrm{Det}(i(x)))(\phi_{m})=\prod_{a\in A_{m}}\mathrm{Det}(x^{a})(\phi_{m}).
\end{equation*}
It is also instructive to see the above in the context of K-theory. We then have induction and restriction maps
\begin{equation*}
\begin{array}{ccccccc}
ind & : & K_{1}(S[m][H_{m}]) & \leftrightarrows & K_{1}(S[m]\circ P) & : & r_{m} \\
ind & : & \mathrm{Det}(S[m][H_{m}]^{\times}) & \leftrightarrows & \mathrm{Det}((S[m]\circ P)^{\times}) & : & r_{m}. \\
\end{array}
\end{equation*}
Similarly we have the corresponding maps on the representation rings
\begin{equation*}
\begin{array}{ccc}
\mathrm{K}_{0}(\overline{\mathbb{Q}_{p}}[m][H_{m}]) & \leftrightarrows & \mathrm{K}_{0}(\overline{\mathbb{Q}_{p}}[m]\circ P) \\
\end{array}
\end{equation*}
and by Mackey theory the induction map $i=\mathrm{Ind}^{P}_{H_{m}}$ maps the ring $\mathrm{K}_{0}(\overline{\mathbb{Q}_{p}}[m][H_{m}])$ onto $\mathrm{K}_{0}(\overline{\mathbb{Q}_{p}}[m]\circ P)$ (see page 68 in \cite{T.}).

Let $I_{H_{m}}$, resp. $I_{P}$, denote the augmentation ideal of $S[m][H_{m}]$, resp. the two sided $S[m]\circ P$-ideal generated by $I_{H_{m}}$. The main result on this stage is to show:
\begin{Th} \label{th:4}
The map $r_{m}$ defined in (6) gives an isomorphism
\begin{equation*}
r_{m}:\mathrm{Det}( GL(S[m]\circ P))=\mathrm{Det}((S[m]\circ P)^{\times})\rightarrow \mathrm{Det}(S[m][H_{m}]^{\times})^{A_{m}}.
\end{equation*}
\end{Th}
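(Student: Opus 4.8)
The plan is to separate the statement into two independent assertions: the equality $\mathrm{Det}(GL(S[m]\circ P))=\mathrm{Det}((S[m]\circ P)^{\times})$, and the surjectivity of $r_m$ onto the $A_m$-invariants (injectivity, the inclusion of the image in the invariants, and the norm identity $r_m(\mathrm{Det}(i(x)))=N_{A_m}(\mathrm{Det}(x))$ being already established). For the first assertion I would argue that the twisted group ring $S[m]\circ P$ is free of finite rank over $S[m]$, which by Lemma \ref{le:1} is a finite product of complete local rings of residue characteristic $p$; hence $S[m]\circ P$ is semilocal and Proposition \ref{th:1} (or its $p$-group refinement Remark \ref{rm:1}) applies verbatim. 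This disposes of the displayed equality and reduces everything to surjectivity.

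For surjectivity I would use the integral logarithm together with the augmentation filtration supplied by the axioms. Applying axiom (3) componentwise (via Lemma \ref{le:1}) to the $p$-group $H_m$ over the ring $S[m]$ gives a short exact sequence
\begin{equation*}
0\to \phi(\mathcal{A}(S[m][H_m]))\to \mathrm{Det}(S[m][H_m]^{\times})\to S[m][H_m^{ab}]^{\times}\to 1,
\end{equation*}
in which the subobject is, via the map $\nu$ of axiom (2), an additive $S[m]$-module, and the quotient is a commutative unit group. The analogous sequence for the twisted group ring, together with the compatibility of $i$ and $r_m$ with these filtrations (on the logarithmic piece $r_m$ becomes the additive transfer $\mathrm{Tr}_{A_m}$, on the abelian quotient it becomes the norm $N_{A_m}$), is exactly the package set up in \cite{CPT} and \cite{T.}; I would transplant it, checking only that nothing used the finiteness of the ambient group $\Delta$ or the unramifiedness of $S$.

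The decisive input is cohomological: since $p\nmid m$, the extension $S[m]/S[m]^{A_m}$ is unramified Galois with group $A_m$, so $S[m]$ admits a normal basis and is an induced, hence cohomologically trivial, $A_m$-module; consequently $\hat H^{i}(A_m,S[m])=0$ for all $i$, and the same triviality propagates to $\phi(\mathcal{A})$ and to $S[m][H_m^{ab}]^{\times}$. This has two effects. First, taking $A_m$-invariants of the displayed sequence stays short exact, since the connecting map into $H^{1}(A_m,\phi(\mathcal{A}))=0$ vanishes. Second, on each graded piece the relevant map surjects onto invariants: the transfer surjects onto $\phi(\mathcal{A})^{A_m}$ and the norm surjects onto $(S[m][H_m^{ab}]^{\times})^{A_m}$, both because $\hat H^{0}(A_m,-)=0$. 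A diagram chase between the two now short exact invariant sequences then lifts any $f\in\mathrm{Det}(S[m][H_m]^{\times})^{A_m}$ to $\mathrm{Det}((S[m]\circ P)^{\times})$, which is the desired surjectivity.

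The step I expect to be the main obstacle is not any single cohomology computation but the bookkeeping that makes the logarithm $\nu$ and the maps $i,r_m$ simultaneously compatible with the $A_m$-action and with the augmentation filtration, so that $r_m$ really does reduce to $\mathrm{Tr}_{A_m}$ and $N_{A_m}$ on the two graded pieces; this is precisely the part I would import from \cite{CPT} and \cite{T.}. The only genuinely new point relative to those references is that here $S$ may be ramified and $\Delta$ infinite, but since $A_m$ is finite and $S[m]$ is \emph{unramified} over its $A_m$-fixed subring, the étale descent and the cohomological triviality above survive unchanged, so the arguments go through once the filtration is in place.
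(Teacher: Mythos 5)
Your preliminary reductions are unproblematic: semilocality of $S[m]\circ P$ (module-finite over $S[m]$, a finite product of complete local rings by Lemma \ref{le:1}) does give $\mathrm{Det}(GL(S[m]\circ P))=\mathrm{Det}((S[m]\circ P)^{\times})$ via Proposition \ref{th:1}, and your treatment of the logarithmic kernel piece is exactly the paper's: after checking that the lift of Frobenius commutes with the $A_{m}$-action (so that $\nu$ of axiom (2) is $A_{m}$-equivariant), surjectivity onto $\mathrm{Det}(1+\mathcal{A}(S[m][H_{m}]))^{A_{m}}$ follows from surjectivity of $\mathrm{tr}_{A_{m}}$ on the induced, hence cohomologically trivial, $S[A_{m}]$-module $\phi(\mathcal{A}(S[H_{m}]))\otimes_{S}S[m]$. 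The gap is on the abelianized quotient. Your chase requires $\hat{H}^{0}(A_{m},S[m][H_{m}^{ab}]^{\times})=0$, i.e.\ that the \emph{norm} map surjects onto $(S[m][H_{m}^{ab}]^{\times})^{A_{m}}$, and you justify this by saying that cohomological triviality of the additive module $S[m]$ ``propagates'' to the unit group. That propagation is not formal: Tate-cohomology vanishing for an additive Galois module implies nothing, by itself, about the multiplicative group of the corresponding ring; a proof would need a congruence-filtration argument ($1+\mathfrak{M}^{n}$-pieces, a completeness/limit step, Hilbert 90 and norm surjectivity on the residue fields, Shapiro's lemma for the permutation of the idempotents of $S[m]$, and care with the conjugation action of $A_{m}$ on $H_{m}^{ab}$), none of which appears in your proposal. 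Note also that you are attempting something strictly stronger than the theorem: you want every $A_{m}$-invariant element of $\mathrm{Det}(S[m][H_{m}]^{\times})$ to be a norm, whereas the theorem only needs it to lie in the image of $r_{m}$, which is a priori larger, since $(S[m]\circ P)^{\times}$ contains units not of the form $i(x)$ (for instance the elements of $P$ itself) whose $r_{m}$-images need not be norms.

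This is also why your plan to ``import the package'' from \cite{CPT} and \cite{T.} does not close the gap: those references (and the paper) do not run a norm argument on the quotient at all. Instead one sets $\widetilde{P}_{m}=P/[H_{m},H_{m}]$ and shows, using Lemma \ref{le:1}, Wall's Lemma 8.2 \cite{W.} and Auslander--Goldman \cite{AG}, that each component of $S[m]\circ\widetilde{P}_{m}$ is an Azumaya algebra over a complete local ring whose Brauer group equals that of its residue field, an algebraic extension of $\mathbb{F}_{p}$, hence is trivial; therefore $S[m]\circ\widetilde{P}_{m}$ is a full matrix algebra of degree $|A_{m}|$ over $(S[m][H_{m}^{ab}])^{A_{m}}$, and Morita theory gives $\mathrm{Det}((S[m]\circ\widetilde{P}_{m})^{\times})\cong((S[m][H_{m}^{ab}])^{A_{m}})^{\times}$. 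Since the kernel of $S[m]\circ P\twoheadrightarrow S[m]\circ\widetilde{P}_{m}$ lies in the radical, units lift and $\mathrm{Det}((S[m]\circ P)^{\times})$ surjects onto this group; so \emph{every} invariant element of the quotient is hit by $r_{m}$, with no multiplicative $\hat{H}^{0}$-vanishing needed, and the trace argument on the kernel piece finishes as you describe. If you insist on your route, you must actually prove $\hat{H}^{0}(A_{m},S[m][H_{m}^{ab}]^{\times})=0$ in this generality; as written, that step is the missing — and decisive — content.
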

\begin{proof}
We have seen, that $r_{m}$ is injective on $\mathrm{Det}( GL(S[m]\circ P))$; we now show, that $r_{m}$ maps $\mathrm{Det}((S[m]\circ P)^{\times})$ onto $\mathrm{Det}(S[m][H_{m}]^{\times})^{A_{m}}$.

First we put $\widetilde{P}_{m}=P/[H_{m},H_{m}]$, then the ring $S[m]\circ \widetilde{P}_{m}$ is isomorphic to the ring of $| A_{m}|\times | A_{m} |$ matrices over $(S[m][H^{ab}_{m}])^{A_{m}}$. To see it, we note, that by Lemma \ref{le:1} $S[m]=\prod_{i}S_{i}$, where $S_{i}$ are complete local rings. Further, by Lemma 8.2 on page 613 in \cite{W.} $S_{i}\circ \widetilde{P}_{m}$ is an Azumaya algebra over some complete local ring $B_{i}$. Thus, it represents a class in the Brauer group of $B_{i}$, which is the quotient group of the group of Azumaya algebras by the subgroup of full matrix algebras. But by Theorem 6.5 in \cite{AG} the Brauer group of $B_{i}$ is isomorphic to the Brauer group of the residue class field $b_{i}$ of $B_{i}$, and thus is trivial, as $b_{i}$ is an algebraic extension of $\mathbb{F}_{p}$ and the Brauer group of a quasi-algebraically closed field is trivial (see Theorem 6.5.4, Theorem 6.5.7 and Proposition 6.5.8 in \cite{NSW}). Hence we see, that $r_{m}$ induces an isomorphism
\begin{equation} \label{eq:6}
\mathrm{Det}((S[m]\circ \widetilde{P}_{m})^{\times})\cong ((S[m][H^{ab}_{m}])^{A_{m}})^{\times}.
\end{equation}
From the axioms (2) and (3) above (which trivially extends to products of rings, since formation of determinants commutes with ring products) and using \eqref{eq:6}, we have a commutative diagram with exact top row:
\begin{equation} \label{eq:7}
\begin{array}{ccccc}
\mathrm{Det}(1+\mathcal{A}(S[m][H_{m}]))^{A_{m}} & \hookrightarrow & \mathrm{Det}(S[m][H_{m}]^{\times})^{A_{m}} & \rightarrow & ((S[m][H^{ab}_{m}])^{A_{m}})^{\times}\\
\Big\uparrow & & {\Big\uparrow}{r_{m}} & & {\Big\uparrow}{\wr}\\
\mathrm{Det}(i(1+\mathcal{A}(S[m][H_{m}]))) & \hookrightarrow & \mathrm{Det}((S[m]\circ P)^{\times}) & \twoheadrightarrow & \mathrm{Det}((S[m]\circ \widetilde{P}_{m})^{\times}).\\
\end{array}
\end{equation}
It will therefore suffice to show
\begin{equation*}
r_{m}(\mathrm{Det}(i(1+\mathcal{A}(S[m][H_{m}]))))\supseteq \mathrm{Det}(1+\mathcal{A}(S[m][H_{m}]))^{A_{m}}
\end{equation*}
and this follows from the commutative diagram
\begin{equation*}
\begin{array}{ccc}
\mathrm{Det}(1+\mathcal{A}(S[m][H_{m}])) & \overset{\nu}{\tilde{\longrightarrow}} & \phi(\mathcal{A}(S[H_{m}]))\otimes_{S}S[m]\\
{\Big\downarrow}{r_{m}} & & {\Big\downarrow}{\mathrm{tr}_{A_{m}}}\\
\mathrm{Det}(1+\mathcal{A}(S[m][H_{m}]))^{A_{m}} & \overset{\nu^{A_{m}}}{\tilde{\longrightarrow}} & (\phi(\mathcal{A}(S[H_{m}]))\otimes_{S}S[m])^{A_{m}}.\\
\end{array}
\end{equation*}
Recall that $F$ is the tensor product of the lift of Frobenius on $S$ with the Frobenius automorphism of $\mathbb{Q}_{p}(\zeta_{m})/\mathbb{Q}_{p}$. Note also that $A_{m}$ acts on $S[m]=S\otimes \mathbb{Z}_{p}[m]$ via the second factor; so, because $G$ is a $\mathbb{Q}$-$p$-elementary, the action of $A_{m}$ on $\left\langle\chi(G)\right\rangle$ factors through $Gal(\mathbb{Q}_{p}(\zeta_{m})/\mathbb{Q}_{p})$; this guarantees, that the actions of $F$ and $A_{m}$ commute; hence $\nu$ is an isomorphism of $A_{m}$-modules, and this gives the bottom row in the above diagram.

Since $S[m]$ is a free $S[A_{m}]$-module, it follows, that
$\phi(\mathcal{A}(S[H_{m}]))\otimes_{S}S[m]$ is a projective $S[A_{m}]$-module (with diagonal
action); and so $\mathrm{tr}_{A_{m}}$, and therefore $r_{m}$, is surjective.
\end{proof}

Finally we show:

\begin{Th} \label{th:5}
Let $G$ be a finite $\mathbb{Q}$-$p$-elementary group and let $S$, $\Delta$ and $R$ be as above. Further, let $\Delta$ act coefficientwise on $\mathrm{Det}(S[G]^{\times})$, then
\begin{equation*}
\mathrm{Det}(S[G]^{\times})^{\Delta}=\mathrm{Det}(R[G]^{\times}).
\end{equation*}
\end{Th}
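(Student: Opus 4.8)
The plan is to reduce the $\mathbb{Q}$-$p$-elementary case to the $p$-group case of Theorem \ref{th:3} via the machinery just developed, and then reassemble. First I would exploit the $S$-algebra decomposition \eqref{eq:4}, $S[G]=\prod_m S[m]\circ P$. Since $\Delta$ fixes $\mathbb{Z}_p$ it acts coefficientwise on each $S[m]=S\otimes_{\mathbb{Z}_p}\mathbb{Z}_p[m]$ through the first factor only, so the decomposition is $\Delta$-equivariant; as $\mathrm{Det}$ commutes with products (Remark \ref{rm:17}) and taking $\Delta$-invariants is left exact,
\[
\mathrm{Det}(S[G]^\times)^\Delta=\prod_m\mathrm{Det}((S[m]\circ P)^\times)^\Delta.
\]

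Next I would feed in Theorem \ref{th:4}. The isomorphism $r_m$ is induced by the restriction map attached to a transversal of $P/H_m$, hence commutes with the coefficientwise $\Delta$-action; moreover $A_m$ acts on $S[m]$ only through the $\mathbb{Z}_p[m]$-factor (and by conjugation on $H_m$), whereas $\Delta$ acts through the $S$-factor (and trivially on $H_m$), so the two actions commute. Consequently
\[
\mathrm{Det}((S[m]\circ P)^\times)^\Delta\cong\bigl(\mathrm{Det}(S[m][H_m]^\times)^\Delta\bigr)^{A_m},
\]
which reduces the problem to descent for the ordinary group ring $S[m][H_m]$ of the $p$-group $H_m$.

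The heart of the argument, and the step I expect to be the main obstacle, is to prove
\[
\mathrm{Det}(S[m][H_m]^\times)^\Delta=\mathrm{Det}(R[m][H_m]^\times).
\]
By Lemma \ref{le:1}, $S[m]=\prod_\nu T_\nu$ is a finite product of domains satisfying (i)-(iii), the lift of Frobenius preserving each factor. Since $\Delta$ fixes $R$ and $\mathbb{Z}_p[m]$ it acts trivially on $R[m]$, so it permutes the factors $T_\nu$ within orbits lying over a fixed idempotent of $R[m]$ and acts by automorphisms inside each factor. Because $\mathrm{Det}(S[m][H_m]^\times)=\prod_\nu\mathrm{Det}(T_\nu[H_m]^\times)$, passing to $\Delta$-invariants collapses each orbit to its stabilizer: choosing an orbit representative $\nu_0$ with stabilizer $\Delta_{\nu_0}$, projection gives
\[
\Bigl(\prod_{\nu\in\text{orbit}}\mathrm{Det}(T_\nu[H_m]^\times)\Bigr)^\Delta\cong\mathrm{Det}(T_{\nu_0}[H_m]^\times)^{\Delta_{\nu_0}}.
\]
Here I must verify that the hypotheses of Theorem \ref{th:3} hold on each factor: $T_{\nu_0}$ satisfies (i)-(iii), and its fixed ring $T_{\nu_0}^{\Delta_{\nu_0}}$ is exactly the factor of $(S[m])^\Delta=R[m]$ attached to that orbit, which again satisfies (i)-(iii) by Lemma \ref{le:1} applied to $R$. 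Theorem \ref{th:3} (which does not require $F$ to be $\Delta$-equivariant) then yields $\mathrm{Det}(T_{\nu_0}[H_m]^\times)^{\Delta_{\nu_0}}=\mathrm{Det}(T_{\nu_0}^{\Delta_{\nu_0}}[H_m]^\times)$, and reassembling over all orbits recovers $\mathrm{Det}(R[m][H_m]^\times)$.

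Finally I would run the first two steps backwards over $R$: applying Theorem \ref{th:4} with $R$ in place of $S$ identifies $\bigl(\mathrm{Det}(R[m][H_m]^\times)\bigr)^{A_m}$ with $\mathrm{Det}((R[m]\circ P)^\times)$, and the product over $m$ of these equals $\mathrm{Det}(R[G]^\times)$ by \eqref{eq:4} for $R$. Stringing the displayed identifications together gives $\mathrm{Det}(S[G]^\times)^\Delta=\mathrm{Det}(R[G]^\times)$. The only genuinely delicate point is the bookkeeping of the permutation action of $\Delta$ on the factors of $S[m]$, together with the verification that the stabilizer-fixed subrings are precisely the factors of $R[m]$ and satisfy (i)-(iii); everything else is formal given Theorems \ref{th:3} and \ref{th:4}.
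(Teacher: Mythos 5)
Your proposal is correct and follows the same skeleton as the paper's proof: decompose $S[G]$ via \eqref{eq:4}, identify $\mathrm{Det}((S[m]\circ P)^{\times})$ with $\mathrm{Det}(S[m][H_{m}]^{\times})^{A_{m}}$ by Theorem \ref{th:4}, observe that the $\Delta$- and $A_{m}$-actions commute (since $\Delta$ acts through the $S$-factor while $A_{m}$ acts through $\mathbb{Z}_{p}[m]$ and by conjugation on $H_{m}$), descend on the $p$-group ring $S[m][H_{m}]$, and then reassemble over $R$ using Theorem \ref{th:4} and \eqref{eq:4} again. The one place where you genuinely diverge is the heart step $\mathrm{Det}(S[m][H_{m}]^{\times})^{\Delta}=\mathrm{Det}(R[m][H_{m}]^{\times})$: the paper disposes of it in one line by citing its axiom (4) (the $p$-group descent statement) for the ring $S[m]$, whereas you actually prove it, decomposing $S[m]=\prod_{\nu}T_{\nu}$ by Lemma \ref{le:1}, letting $\Delta$ permute the factors, collapsing each orbit to the stabilizer-invariants of a representative factor, and applying Theorem \ref{th:3} to that factor. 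This is arguably more careful than the paper's treatment: axiom (4) and Theorem \ref{th:3} are formulated for a single domain satisfying (i)--(iii), while $S[m]$ is only a product of such domains on which $\Delta$ may nontrivially permute the primitive idempotents, so your orbit-stabilizer bookkeeping --- together with the identification of the stabilizer-fixed subrings with the factors of $R[m]$ (which follows from uniqueness of the decomposition into indecomposables, the fixed subrings being domains) --- is exactly what legitimizes the paper's citation; note also that Theorem \ref{th:3} tolerates this situation because it imposes no compatibility between the Frobenius lifts on $T_{\nu_{0}}$ and on its fixed ring. One small slip: the fact that $\mathrm{Det}$ commutes with finite ring products is elementary and is used implicitly by the paper as well, but it is not the content of Remark \ref{rm:17}, which concerns direct limits.
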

\begin{proof}
By \eqref{eq:4} together with Theorem \ref{th:4}
\begin{equation*}
\mathrm{Det}(S[G]^{\times})^{\Delta}=\bigoplus_{m}\mathrm{Det}((S[m]\circ P)^{\times})^{\Delta}=\bigoplus_{m}(\mathrm{Det}(S[m][H_{m}]^{\times})^{A_{m}})^{\Delta}.
\end{equation*}
Recall that $\Delta$ acts via the first term in $S[m]\circ P=S\otimes_{R}(R[m]\circ P)$ and that $A_{m}$ acts via the second term; hence the actions of $\Delta$ and $A_{m}$ commute on $S[m][H_{m}]=(S\otimes_{R}R[m])[H_{m}]$; hence we see, that
\begin{equation*}
\mathrm{Det}(S[G]^{\times})^{\Delta}=\bigoplus_{m}(\mathrm{Det}(S[m][H_{m}]^{\times}))^{A_{m}\times \Delta}=\bigoplus_{m}(\mathrm{Det}(S[m][H_{m}]^{\times})^{\Delta})^{A_{m}}
\end{equation*}
and so by the fact 4 and Theorem \ref{th:4} together with \eqref{eq:4} we have equalities
\begin{equation*}
\mathrm{Det}(S[G]^{\times})^{\Delta}=\bigoplus_{m}(\mathrm{Det}(R[m][H_{m}]^{\times}))^{A_{m}}=\bigoplus_{m}\mathrm{Det}((R[m]\circ P)^{\times})=\mathrm{Det}(R[G]^{\times}).
\end{equation*}
\end{proof}

\textbf{Application.} We conclude this stage by considering the implications of the above result for an arbitrary finite group $G$. From 12.6 in \cite{S.} we know, that we can find an integer $l$ prime to $p$, $\mathbb{Q}$-$p$-elementary subgroups $H_{i}$ of $G$, integers $n_{i}$, and $\theta_{i}\in\mathrm{K}_{0}(\mathbb{Q}_{p}[H_{i}])$, such that
\begin{equation*}
l\cdot 1_{G}=\sum_{i} n_{i}\cdot \mathrm{Ind}^{G}_{H_{i}}(\theta_{i}).
\end{equation*}
Thus, given $\mathrm{Det}(x)\in\mathrm{Det}( GL(S[G]))^{\Delta}$, then by the Frobenius
structure of the module $ GL(S[G])$ over $\mathrm{K}_{0}(\mathbb{Q}_{p}[H_{i}])$ (see \S
5.a in \cite{CPT}) we have
\begin{equation*}
\mathrm{Det}(x)^{l}=\prod_{i}\big(\mathrm{Ind}^{G}_{H_{i}}(\theta_{i}\cdot\mathrm{Res}^{H_{i}}_{G}(\mathrm{Det}(x)))\big)^{n_{i}}.
\end{equation*}
However, Theorem \ref{th:5} above implies that
\begin{equation*}
\theta_{i}\cdot\mathrm{Res}^{H_{i}}_{G}(\mathrm{Det}(x))\in\mathrm{Det}(S[H_{i}]^{\times})^{\Delta}=\mathrm{Det}(R[H_{i}]^{\times}).
\end{equation*}
Thus we have shown
\begin{Th} \label{th:6}
For any finite group $G$ each element in the quotient group
\begin{equation*}
\mathrm{Det}( GL(S[G]))^{\Delta}/\mathrm{Det}( GL(R[G]))
\end{equation*}
has finite order, which is prime to $p$.
\end{Th}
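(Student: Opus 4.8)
The plan is to bootstrap from the $\mathbb{Q}$-$p$-elementary case, already settled in Theorem \ref{th:5}, to an arbitrary finite group $G$ by means of a Brauer--Artin type induction theorem with denominator prime to $p$. Since $S$, and hence $S[G]$ and $R[G]$, is semilocal, Proposition \ref{th:1} lets me freely identify $\mathrm{Det}(GL(S[G]))$ with $\mathrm{Det}(S[G]^{\times})$, and likewise over $R$, so the content is really a statement about these determinant groups. The first step is to invoke the induction result of \cite{S.} (12.6): there exist an integer $l$ prime to $p$, $\mathbb{Q}$-$p$-elementary subgroups $H_i \leq G$, integers $n_i$, and classes $\theta_i \in \mathrm{K}_0(\mathbb{Q}_p[H_i])$ with
\[ l \cdot 1_G = \sum_i n_i \cdot \mathrm{Ind}^G_{H_i}(\theta_i) \]
in the relevant representation ring.

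Second, I would use the module structure of $K_1(S[G])$, and of its Det-quotient, over the representation rings $\mathrm{K}_0(\mathbb{Q}_p[H_i])$, together with the induction and restriction homomorphisms and the projection (Frobenius reciprocity) formula $\mathrm{Ind}^G_{H_i}(\theta_i \cdot \mathrm{Res}^{H_i}_G(y)) = \mathrm{Ind}^G_{H_i}(\theta_i) \cdot y$, which is the formalism recalled in \S 5.a of \cite{CPT}. Applying the displayed relation to an element $y = \mathrm{Det}(x) \in \mathrm{Det}(GL(S[G]))^{\Delta}$ then yields the multiplicative identity
\[ \mathrm{Det}(x)^l = \prod_i \big( \mathrm{Ind}^G_{H_i}(\theta_i \cdot \mathrm{Res}^{H_i}_G(\mathrm{Det}(x))) \big)^{n_i}. \]

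Third, I would observe that restriction commutes with the coefficientwise $\Delta$-action, so that $\mathrm{Res}^{H_i}_G(\mathrm{Det}(x))$ is again $\Delta$-invariant, whence $\theta_i \cdot \mathrm{Res}^{H_i}_G(\mathrm{Det}(x))$ lies in $\mathrm{Det}(S[H_i]^{\times})^{\Delta}$. Since each $H_i$ is $\mathbb{Q}$-$p$-elementary, Theorem \ref{th:5} identifies this group with $\mathrm{Det}(R[H_i]^{\times})$. Consequently each inducted factor on the right-hand side lies in the image of $\mathrm{Det}(GL(R[H_i]))$ under $\mathrm{Ind}^G_{H_i}$, hence in $\mathrm{Det}(GL(R[G]))$, and therefore $\mathrm{Det}(x)^l \in \mathrm{Det}(GL(R[G]))$. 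This shows that the quotient $\mathrm{Det}(GL(S[G]))^{\Delta}/\mathrm{Det}(GL(R[G]))$ has exponent dividing $l$; as $l$ is prime to $p$, every element has finite order prime to $p$.

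The step I expect to require the most care is the second one: rigorously justifying that the relation $l \cdot 1_G = \sum_i n_i \mathrm{Ind}^G_{H_i}(\theta_i)$ in the representation ring really does act on the Det-groups to give the displayed power formula. This rests on the compatibility of the determinant with induction and restriction and on the projection formula for these functors (the Frobenius module structure over $\mathrm{K}_0(\mathbb{Q}_p[H_i])$); one must check that these hold at the level of Det-groups and not merely for virtual characters, and that the maps are well defined on the integral group rings $S[H_i]$ and $S[G]$. Everything else is formal once Theorem \ref{th:5} is available.
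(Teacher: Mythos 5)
Your proposal is correct and follows essentially the same route as the paper: the induction relation $l\cdot 1_{G}=\sum_{i} n_{i}\,\mathrm{Ind}^{G}_{H_{i}}(\theta_{i})$ from 12.6 in \cite{S.}, the Frobenius-module structure from \S 5.a of \cite{CPT} giving $\mathrm{Det}(x)^{l}=\prod_{i}\bigl(\mathrm{Ind}^{G}_{H_{i}}(\theta_{i}\cdot\mathrm{Res}^{H_{i}}_{G}(\mathrm{Det}(x)))\bigr)^{n_{i}}$, and Theorem \ref{th:5} to place each factor $\theta_{i}\cdot\mathrm{Res}^{H_{i}}_{G}(\mathrm{Det}(x))$ in $\mathrm{Det}(R[H_{i}]^{\times})$, so that $\mathrm{Det}(x)^{l}\in\mathrm{Det}(GL(R[G]))$. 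Your added remarks (that restriction commutes with the coefficientwise $\Delta$-action, and that Proposition \ref{th:1} identifies $\mathrm{Det}(GL(\cdot))$ with $\mathrm{Det}((\cdot)^{\times})$) only make explicit what the paper leaves implicit.
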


\begin{flushleft}
\textbf{$\mathbb{Q}$-$l$-elementary groups.}
\end{flushleft}

We consider a prime $l\neq p$ and a $\mathbb{Q}$-$l$-elementary group $G$, i.e $G$ may be written as $(C\times C')\rtimes L$, where $C$ is a cyclic $p$-group, $C'$ is a cyclic group of order prime to $pl$ and $L$ is an $l$-group. On this stage we show:
\begin{Th} \label{th:7}
If $G$ is a $\mathbb{Q}$-$l$-elementary group, then
\[ \mathrm{Det}(S[G]^{\times})^{\Delta}=\mathrm{Det}(R[G]^{\times}). \]
\end{Th}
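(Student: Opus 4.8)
The plan is to run the same twisted-group-ring strategy used for the $\mathbb{Q}$-$p$-elementary case in Theorems \ref{th:4} and \ref{th:5}, but with the roles reorganised so that the acting group is now the $l$-group $L$, exploiting throughout that $|L|$ is a power of $l\neq p$ and hence a unit in $S$. Write $G=(C\times C')\rtimes L$ and put $A=C\times C'$, a cyclic normal subgroup whose order is prime to $l$; here $C$ is the abelian normal Sylow $p$-subgroup and $C'$ has order prime to $pl$. First I would decompose the prime-to-$p$ part $\mathbb{Z}_p[C']=\prod_{m\mid|C'|}\mathbb{Z}_p[m]$ as in \eqref{eq:3}, keeping the local factor $\mathbb{Z}_p[C]$ intact, and then tensor with $S\otimes_{\mathbb{Z}_p}-$ and $-\otimes_{\mathbb{Z}_p[A]}S[G]$ to obtain a product decomposition $S[G]=\prod_m (S[m][C])\circ L$ of twisted group rings, where $L$ acts on $S[m]=S\otimes_{\mathbb{Z}_p}\mathbb{Z}_p[m]$ through a finite quotient $A_m\subseteq\mathrm{Gal}(\mathbb{Q}_p(\zeta_m)/\mathbb{Q}_p)$ and on $C$ by conjugation. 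By Lemma \ref{le:1} each factor splits further into rings satisfying (i)--(iii), so the axiomatic machinery of Step 1 is available on every component.

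Next, for each factor I would establish the analogue of Theorem \ref{th:4}: a restriction map $r_m$ identifying $\mathrm{Det}(((S[m][C])\circ L)^\times)$ with the $A_m$-invariants of the determinant group of the associated untwisted group ring. The decisive feature of the present case is that $|L|$ is a power of $l\neq p$, so the crossed product by $L$ is separable over its coefficient ring; localising via Lemma \ref{le:1}, its components are Azumaya over complete local rings, and because those rings have residue fields algebraic over $\mathbb{F}_p$---hence $C_1$ with trivial Brauer group---the components split as matrix algebras over a commutative étale ring, exactly as in \eqref{eq:6}. Thus no integral group logarithm is required in the $L$-direction; the only genuinely $p$-adic content inside a factor is that of the abelian $p$-group $C$, whose descent is furnished by Theorem \ref{th:3} (our axiom (4)), while the remaining tame coefficient extension descends by the field- and unit-level results of Section 1.

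Finally I would assemble the descent statement exactly as in Theorem \ref{th:5}: since $\Delta$ acts through the first tensor factor in $S[m][C]\circ L=S\otimes_R(R[m][C]\circ L)$ while $A_m$ and the $L$-twist act through the second, the two actions commute, taking $\Delta$-invariants commutes with taking $A_m$-invariants, and feeding in the per-factor descent yields $\mathrm{Det}(S[G]^\times)^\Delta=\bigoplus_m\mathrm{Det}(((R[m][C])\circ L)^\times)=\mathrm{Det}(R[G]^\times)$. The step I expect to be the main obstacle is the Azumaya/Morita reduction of the second paragraph: one must verify that the splitting of Theorem \ref{th:4} persists when the crossed product is by the $l$-group $L$ rather than a $p$-group and the coefficient ring is $S[m][C]$ rather than $S[m]$, and that the Frobenius lift, the $\Delta$-action and the $A_m$-action mutually commute so that the iterated passage to invariants is legitimate. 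Together with the equality of Theorem \ref{th:5} for $\mathbb{Q}$-$p$-elementary groups, this result for every $\mathbb{Q}$-$l$-elementary $G$ provides the per-prime inputs needed to deduce Conjecture \ref{cj:1} for arbitrary finite $G$ by Brauer induction.
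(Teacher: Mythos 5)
Your first paragraph is fine: the decomposition $S[G]=\prod_m (S[m][C])\circ L$ does hold, since the idempotents of $\mathbb{Z}_p[C']=\prod_m\mathbb{Z}_p[m]$ are stable under conjugation by $L$. But the proof collapses exactly at the step you flag as the ``main obstacle'', and it is not an obstacle to be overcome --- the claim is false. First, separability of $(S[m][C])\circ L$ over the coefficient ring $S[m][C]$ (true, as $|L|$ is invertible) concerns a \emph{non-central} subring and implies nothing about separability over the center, which is what Azumaya means. Second, the conclusion itself fails: take $m=1$, $C'=1$, $S=R=\mathbb{Z}_p$ with $p=3$, $L=C_2$ acting by inversion on $C=C_3$, so that $G=S_3$ is $\mathbb{Q}$-$2$-elementary and your factor is $(S[C])\circ L=\mathbb{Z}_3[S_3]\cong \mathbb{Z}_3[C_2]\times\Lambda$ with $\Lambda=\mathbb{Z}_3[\zeta_3]\circ C_2$. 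The center of $\Lambda$ is $\mathbb{Z}_3$, so if $\Lambda$ were a matrix ring over a commutative ring it would have to be $M_2(\mathbb{Z}_3)$; but $\mathbb{Z}_3[\zeta_3]/\mathbb{Z}_3$ is \emph{totally ramified}, so $C_2$ acts trivially on residue fields, $\mathrm{rad}(\Lambda)=(1-\zeta_3)\Lambda$, and $\Lambda/\mathrm{rad}(\Lambda)\cong\mathbb{F}_3[C_2]\cong\mathbb{F}_3\times\mathbb{F}_3\neq M_2(\mathbb{F}_3)$. This pinpoints why the ``role reversal'' of Theorem \ref{th:4} is structurally impossible: there the twisting group is the $p$-group $P$ acting on $S[m]$ with $m$ \emph{prime to} $p$, i.e.\ through the unramified extension $\mathbb{Q}_p(\zeta_m)/\mathbb{Q}_p$, so after killing $[H_m,H_m]$ the coefficient extension is \'etale Galois --- that is what Wall's lemma and the Brauer-group argument behind \eqref{eq:6} require. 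In your version $L$ acts by conjugation on the $p$-group $C$, i.e.\ through the totally ramified extensions $\mathbb{Q}_p(\zeta_{p^j})/\mathbb{Q}_p$, and such crossed products are never Azumaya. Hence the factors $(S[m][C])\circ L$ retain genuinely non-split $p$-adic structure, Theorem \ref{th:3} (which concerns untwisted group rings of $p$-groups) does not apply to them, and the promise that ``no integral group logarithm is required'' cannot be kept.

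The paper's proof takes a completely different route that sidesteps all structure theory of these crossed products. Given $\mathrm{Det}(z)\in\mathrm{Det}(S[G]^{\times})^{\Delta}$, it passes to $G'=G/C$, whose order is prime to $p$, so $\mathbb{Z}_p[G']$ is a split maximal order and descent there is elementary (Proposition \ref{prop:2}); it lifts the resulting unit through the surjection $R[G]^{\times}\twoheadrightarrow R[G']^{\times}$ (Lemma \ref{le:3}); the error term $\mathrm{Det}(zx^{-1})$ is then trivial on characters inflated from $G'$, hence by the congruence of Proposition \ref{prop:1} it lies in $\mathrm{Hom}_{\Delta}(R_G,1+\mathcal{P})$, a pro-$p$-group. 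The decisive input --- entirely absent from your proposal --- is Theorem \ref{th:6}, i.e.\ the consequence of the $\mathbb{Q}$-$p$-elementary stage for \emph{arbitrary} finite groups: the quotient $\mathrm{Det}(S[G]^{\times})^{\Delta}/\mathrm{Det}(R[G]^{\times})$ has exponent prime to $p$, so a pro-$p$ element of it must be trivial, forcing $\mathrm{Det}(zx^{-1})\in\mathrm{Det}(R[G]^{\times})$. So the $\mathbb{Q}$-$l$-elementary case is not proved in parallel with the $\mathbb{Q}$-$p$-elementary one; it leans on the latter's global output, and this interplay (pro-$p$ element versus prime-to-$p$ exponent) is what replaces the Azumaya splitting you tried to use. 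Any repair of your plan would require developing the logarithm axioms (1)--(3) for twisted group rings with ramified twisting, which is precisely what the paper's argument avoids.
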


Then, reasoning as in the Application on the previous stage, we can immediately deduce
\begin{Th} \label{th:8}
For an arbitrary finite group $G$ each element in the quotient group
\begin{equation*}
\mathrm{Det}( GL(S[G]))^{\Delta}/\mathrm{Det}( GL(R[G]))
\end{equation*}
has finite order, which is prime to $l$.
\end{Th}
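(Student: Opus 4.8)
The plan is to transcribe, almost verbatim, the Application argument that established Theorem \ref{th:6}, replacing the prime $p$ by the prime $l$ throughout and feeding in the $\mathbb{Q}$-$l$-elementary descent result Theorem \ref{th:7} wherever the $\mathbb{Q}$-$p$-elementary Theorem \ref{th:5} was used before. The single new input I would isolate is the analogue at $l$ of the Brauer--Artin induction statement quoted from 12.6 in \cite{S.}: there exists an integer $m$ prime to $l$, together with $\mathbb{Q}$-$l$-elementary subgroups $H_{i}\subseteq G$, integers $n_{i}$ and classes $\theta_{i}\in\mathrm{K}_{0}(\mathbb{Q}_{p}[H_{i}])$, such that
\begin{equation*}
m\cdot 1_{G}=\sum_{i} n_{i}\cdot \mathrm{Ind}^{G}_{H_{i}}(\theta_{i}).
\end{equation*}
This is the same theorem invoked on the previous stage, now read off at the prime $l$ instead of $p$; it is classical, and the only thing to verify is that the coefficient $m$ can indeed be taken prime to $l$.

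Given $\mathrm{Det}(x)\in\mathrm{Det}(GL(S[G]))^{\Delta}$, I would first raise to the $m$-th power and apply the $\mathrm{K}_{0}(\mathbb{Q}_{p}[H_{i}])$-module structure of $GL(S[G])$ (the Frobenius reciprocity recalled in \S 5.a of \cite{CPT}) to the identity above, obtaining
\begin{equation*}
\mathrm{Det}(x)^{m}=\prod_{i}\big(\mathrm{Ind}^{G}_{H_{i}}(\theta_{i}\cdot\mathrm{Res}^{H_{i}}_{G}(\mathrm{Det}(x)))\big)^{n_{i}}.
\end{equation*}
Since restriction and the $\mathrm{K}_{0}$-action commute with the coefficientwise $\Delta$-action, each factor $\theta_{i}\cdot\mathrm{Res}^{H_{i}}_{G}(\mathrm{Det}(x))$ lies in $\mathrm{Det}(S[H_{i}]^{\times})^{\Delta}$. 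Here I would invoke Theorem \ref{th:7}, which yields $\mathrm{Det}(S[H_{i}]^{\times})^{\Delta}=\mathrm{Det}(R[H_{i}]^{\times})$ precisely because each $H_{i}$ is $\mathbb{Q}$-$l$-elementary. Inducing back up and using the functoriality of $\mathrm{Det}$ on $K_{1}$, together with Proposition \ref{th:1} to identify $\mathrm{Det}(GL(R[G]))$ with $\mathrm{Det}(K_{1}(R[G]))$, each factor of the product lands in $\mathrm{Det}(GL(R[G]))$, and hence so does $\mathrm{Det}(x)^{m}$.

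It follows that the image of $\mathrm{Det}(x)$ in $\mathrm{Det}(GL(S[G]))^{\Delta}/\mathrm{Det}(GL(R[G]))$ is annihilated by $m$; as $m$ is prime to $l$, every element of the quotient has finite order prime to $l$, which is the assertion. I do not anticipate a genuine obstacle: the argument is a line-for-line repetition of the $\mathbb{Q}$-$p$-elementary Application, and the only points requiring attention are that the induction datum at $l$ carries an integer coefficient prime to $l$ (so that no $l$-torsion is created in passing to the $m$-th power), and that the $\mathrm{K}_{0}(\mathbb{Q}_{p}[H_{i}])$-module formalism of \cite{CPT} is insensitive to whether the governing prime equals the residue characteristic $p$. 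Both are standard and carry over unchanged.
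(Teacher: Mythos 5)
Your proposal is correct and is exactly the paper's intended argument: the paper deduces Theorem \ref{th:8} by ``reasoning as in the Application on the previous stage,'' i.e.\ by running the same Brauer induction from 12.6 in \cite{S.} at the prime $l$ (with coefficient prime to $l$ and $\mathbb{Q}$-$l$-elementary subgroups) and substituting Theorem \ref{th:7} for Theorem \ref{th:5}. Your two points of care --- that the induction coefficient can be taken prime to $l$, and that the $\mathrm{K}_{0}$-module formalism is indifferent to the choice of governing prime --- are precisely the observations that make the transcription legitimate.
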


This, together with Theorem \ref{th:6} and Proposition \ref{th:1}, will then establish Conjecture \ref{cj:1}.

Prior to proving Theorem \ref{th:7}, we first need to recall four preparatory results:
\begin{Le} \label{le:2}
Each irreducible character $\chi$ of $G$ can be written in the form $\mathrm{Ind}^{G}_{\Omega}\phi$, where $\phi$ is an abelian character of a subgroup $\Omega$, which contains $C\times C'$.
\end{Le}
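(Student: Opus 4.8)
The plan is to prove this as a relative monomiality statement for the pair $(G,A)$, where $A:=C\times C'$. First I would record the structural facts we actually need: since $|C|$ and $|C'|$ are coprime, $A=C\times C'$ is an abelian normal subgroup of $G$, and $G/A\cong L$ is an $l$-group, hence nilpotent. The entire argument then rests on Clifford theory, and the specific $\mathbb{Q}$-$l$-elementary shape beyond these two facts will not be needed.

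The first step is a Clifford reduction to an inertia subgroup. Given $\chi\in\mathrm{Irr}(G)$, restrict to $A$; by Clifford's theorem $\chi|_A$ is a multiple of the sum of the $G$-conjugates of a single linear character $\lambda$ of $A$. Let $I=I_G(\lambda)\supseteq A$ be its inertia group. The Clifford correspondence gives a unique $\psi\in\mathrm{Irr}(I\mid\lambda)$ with $\chi=\mathrm{Ind}_I^G\psi$, and $\lambda$ is $I$-invariant by construction. Since $I/A\hookrightarrow G/A\cong L$ is again an $l$-group, it suffices to show the following: whenever $A\triangleleft I$ is abelian, $I/A$ is an $l$-group, and $\lambda\in\mathrm{Irr}(A)$ is linear and $I$-invariant, every $\psi\in\mathrm{Irr}(I\mid\lambda)$ has the form $\mathrm{Ind}_\Omega^I\phi$ with $A\subseteq\Omega\subseteq I$ and $\phi$ linear. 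Transitivity of induction then yields $\chi=\mathrm{Ind}_\Omega^G\phi$ with $\Omega\supseteq A$, as required.

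I would prove this claim by induction on the index $|I:A|$, the base case $I=A$ being trivial with $\Omega=A$ and $\phi=\lambda$. If $I\neq A$, then $I/A$ is a nontrivial nilpotent $l$-group and so contains a central subgroup of order $l$; let $A\subseteq J\triangleleft I$ be its preimage, so that $J/A$ is cyclic of order $l$. Because $\lambda$ is $J$-invariant and $J/A$ is cyclic, $\lambda$ extends to a linear character of $J$, whence by Gallagher's theorem every irreducible constituent $\mu$ of $\psi|_J$ lying over $\lambda$ is itself linear. Set $I_0=I_I(\mu)\supseteq J\supseteq A$. If $I_0\subsetneq I$, the Clifford correspondence inside $I$ writes $\psi=\mathrm{Ind}_{I_0}^I\psi_0$ with $\psi_0\in\mathrm{Irr}(I_0\mid\mu)$; since $|I_0:A|<|I:A|$ the inductive hypothesis furnishes $\psi_0=\mathrm{Ind}_\Omega^{I_0}\phi$ with $A\subseteq\Omega$, and transitivity gives $\psi=\mathrm{Ind}_\Omega^I\phi$. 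If instead $I_0=I$, then $\mu$ is an $I$-invariant linear character of $J$; replacing the pair $(A,\lambda)$ by $(J,\mu)$, we have $|I:J|<|I:A|$ and may invoke the inductive hypothesis to obtain $\Omega\supseteq J\supseteq A$.

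The main obstacle, and the only genuinely non-formal input, is the extension step: an $I$-invariant linear character need not extend across an arbitrary quotient, and this is precisely why one chooses $J/A$ of prime order $l$, so that the obstruction in $H^2(J/A,\mathbb{C}^\times)=0$ vanishes and Gallagher's description applies, forcing $\mu$ to be linear. The remaining care is purely bookkeeping: one must check at each stage that the inducing subgroup contains $A=C\times C'$, which is automatic here because every subgroup produced lies inside an inertia group containing $J\supseteq A$.
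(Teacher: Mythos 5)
Your strategy --- Clifford reduction to the inertia group of a linear character of $A=C\times C'$, followed by induction on the index of an invariant linear character --- is sound, and it is essentially a self-contained rendering of what the paper handles by citation: the paper's entire proof of this lemma is ``See 8.2 in \cite{S.}'', i.e.\ the little-groups description of the irreducible representations of a semidirect product by an abelian normal subgroup, which, combined with monomiality of the $l$-group stabilizers and the projection formula, yields exactly the statement. So the mathematical content of your argument is the right one, and it has the merit of making the inductive mechanism explicit rather than quoting a structure theorem.

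There is, however, one step that fails as literally written. Your inductive claim requires the normal subgroup to be \emph{abelian}, but in Case B ($I_0=I$) you invoke the inductive hypothesis for the pair $(J,\mu)$, and $J$ need not be abelian: $J$ is an extension of $A$ by a group of order $l$ which may act nontrivially on $A$ while still fixing $\lambda$. Concretely, with $p=7$, $l=3$, take $A=C_7\times C_{13}$, $\lambda$ trivial on $C_7$ and faithful on $C_{13}$, and $I=(C_7\rtimes C_3)\times C_{13}\times C_3$ (a $\mathbb{Q}$-$3$-elementary group with $I_I(\lambda)=I$); choosing $J=(C_7\rtimes C_3)\times C_{13}$, the preimage of a central order-$3$ subgroup of $I/A$, every linear character $\mu$ of $J$ over $\lambda$ is $I$-invariant, so your Case B occurs with $J$ nonabelian and $|I:J|=3>1$, and the hypothesis you would need is unavailable. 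The repair costs nothing: state the claim for arbitrary $A\triangleleft I$ (not necessarily abelian) with $I/A$ an $l$-group and $\lambda\in\mathrm{Irr}(A)$ \emph{linear} and $I$-invariant. Abelianness of $A$ is never used in your argument --- the extension of $\lambda$ across the cyclic quotient $J/A$ and Gallagher's theorem require only that $\lambda$ be irreducible (here linear) and invariant, and Case A still applies the hypothesis to the original abelian $A$ --- so with this weaker formulation the induction closes and your proof is correct.
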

\begin{proof}
See 8.2 in \cite{S.}
\end{proof}

\begin{Prop} \label{prop:1}
Let $\mathcal{O}$ denote the ring of integers of the finite extension of $\mathbb{Q}_{p}$ generated by the values of all characters of $G$, let $\mathfrak{m}$ denote the maximal ideal of $\mathcal{O}$, and let $\mathcal{P}$ denote the $S\otimes_{\mathbb{Z}_{p}}\mathcal{O}$-ideal generated by $\mathfrak{m}$. With the notation of the previous lemma, we write $\phi=\phi' \phi_{p}$, where $\phi'$ (resp. $\phi_{p}$) has order prime to $p$ (resp. $p$-power order); and we put $\chi'=\mathrm{Ind}^{G}_{\Omega}\phi'$. Then for $r\in  GL(S[G])$ we have the congruence
\begin{equation*}
\mathrm{Det}(r)(\chi-\chi')\equiv 1\;(\mathrm{mod}\;\mathcal{P}).
\end{equation*}
\end{Prop}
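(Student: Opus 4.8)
The plan is to reduce the statement to the elementary fact that $p$-power roots of unity are congruent to $1$ modulo the maximal ideal, and then to read off the congruence of determinants from the explicit monomial-matrix description of an induced representation.

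First I would record that, since $\phi_p$ has $p$-power order, every value $\phi_p(\omega)$ with $\omega\in\Omega$ is a $p$-power root of unity, and these lie in $\mathcal{O}$; as $\mathcal{O}$ is the (local) ring of integers of a finite extension of $\mathbb{Q}_p$, each such value satisfies $\phi_p(\omega)\equiv 1\pmod{\mathfrak{m}}$, because $\zeta_{p^k}-1$ generates a power of $\mathfrak{m}$. Writing $\phi=\phi'\phi_p$, this gives $\phi(\omega)=\phi'(\omega)\phi_p(\omega)\equiv \phi'(\omega)\pmod{\mathfrak{m}}$ for every $\omega\in\Omega$; that is, the abelian characters $\phi$ and $\phi'$ of $\Omega$ agree modulo $\mathfrak{m}$.

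Next I would make the two induced representations explicit using the same data. Fixing a transversal $\{g_1,\dots,g_d\}$ of $G/\Omega$, the representation $T_\chi$ affording $\chi=\mathrm{Ind}^G_\Omega\phi$ sends $x\in G$ to the monomial matrix whose nonzero $(i,j)$-entry is $\phi(\omega_j)$ exactly when $x g_j=g_i\omega_j$ with $\omega_j\in\Omega$, while $T_{\chi'}$ affording $\chi'=\mathrm{Ind}^G_\Omega\phi'$ uses the same transversal and hence the same support and permutation pattern, with entries $\phi'(\omega_j)$ instead. By the previous step these entries are congruent modulo $\mathfrak{m}$, so $T_\chi(x)\equiv T_{\chi'}(x)\pmod{\mathcal{P}}$ entrywise; extending $S$-linearly to $S[G]$ and blockwise to matrices yields $T_\chi(r)\equiv T_{\chi'}(r)\pmod{\mathcal{P}}$ for every $r\in GL(S[G])$, both sides being matrices over $S\otimes_{\mathbb{Z}_p}\mathcal{O}$.

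Finally, since the determinant is a polynomial in the matrix entries, I obtain $\mathrm{Det}(r)(\chi)=\det T_\chi(r)\equiv \det T_{\chi'}(r)=\mathrm{Det}(r)(\chi')\pmod{\mathcal{P}}$. As $r$ is invertible, both determinants lie in $(S\otimes_{\mathbb{Z}_p}\mathcal{O})^\times$, so dividing by the unit $\mathrm{Det}(r)(\chi')$ and using that $\mathrm{Det}(r)$ is a homomorphism on $R_G$ gives $\mathrm{Det}(r)(\chi-\chi')=\mathrm{Det}(r)(\chi)\,\mathrm{Det}(r)(\chi')^{-1}\equiv 1\pmod{\mathcal{P}}$. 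The only delicate point is this passage from an additive congruence of units to a multiplicative one: it is valid precisely because $\mathrm{Det}(r)(\chi')$ is a unit of $S\otimes_{\mathbb{Z}_p}\mathcal{O}$, so that $\mathrm{Det}(r)(\chi)\mathrm{Det}(r)(\chi')^{-1}-1=(\det T_\chi(r)-\det T_{\chi'}(r))\,\mathrm{Det}(r)(\chi')^{-1}\in\mathcal{P}$. I expect the main (though essentially routine) obstacle to be the bookkeeping of realizing the representations over $\mathcal{O}$ and confirming that the determinants genuinely land in $(S\otimes_{\mathbb{Z}_p}\mathcal{O})^\times$, so that the final division step is legitimate.
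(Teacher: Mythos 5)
Your proof is correct, and it is essentially the argument the paper relies on: the paper's "proof" is just the citation of Taylor's Lemma 1.3 (Classgroups of Group Rings, p.\ 35), whose underlying argument is exactly your reduction --- values of $\phi_{p}$ are $p$-power roots of unity, hence $\equiv 1$ modulo the maximal ideal, so the two monomial induced representations $T_{\chi}$ and $T_{\chi'}$ agree entrywise modulo the ideal generated by $\mathfrak{m}$, and congruent invertible matrices have congruent (unit) determinants, which turns the additive congruence into the multiplicative one for $\mathrm{Det}(r)(\chi-\chi')$. The bookkeeping you flag is indeed routine: the values of $\phi$ may only lie in a larger ring $\mathcal{O}'\supseteq\mathcal{O}$ (possibly ramified over $\mathcal{O}$), but since $S$ is $\mathbb{Z}_{p}$-flat and the residue field extension $\mathcal{O}/\mathfrak{m}\hookrightarrow \mathcal{O}'/\mathfrak{m}'$ is injective, the map $S\otimes_{\mathbb{Z}_p}(\mathcal{O}/\mathfrak{m})\to S\otimes_{\mathbb{Z}_p}(\mathcal{O}'/\mathfrak{m}')$ is injective, so a congruence modulo $\mathfrak{m}'(S\otimes_{\mathbb{Z}_p}\mathcal{O}')$ between elements of $S\otimes_{\mathbb{Z}_p}\mathcal{O}$ descends to a congruence modulo $\mathcal{P}$.
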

\begin{proof}
It is an easy generalization of Lemma 1.3 on page 35 in \cite{T.}.
\end{proof}

\begin{Prop} \label{prop:2}
Put $G'=G/C$. Then $\mathbb{Z}_{p}[G']$ is a split maximal $\mathbb{Z}_{p}$-order, i.e. it is a product of matrix rings
\begin{equation*}
\mathbb{Z}_{p}[G']=\prod_{i}\mathrm{M}_{n_{i}}(\mathcal{O}_{i})
\end{equation*}
over (local) rings of integers $\mathcal{O}_{i}$. Thus we have the equalities
\begin{equation*}
\mathrm{Det}( GL(S[G']))=\prod_{i}(S\otimes_{\mathbb{Z}_{p}}\mathcal{O}_{i})^{\times}=\mathrm{Det}(S[G']^{\times})
\end{equation*}
and
\begin{equation*}
\mathrm{Det}(S[G']^{\times})^{\Delta}=\prod_{i}\mathrm{Det}((S\otimes\mathcal{O}_{i})^{\times})^{\Delta}=\prod_{i}(S\otimes\mathcal{O}_{i})^{\times \Delta}=
\end{equation*}
\begin{equation*}
\prod_{i}(R\otimes\mathcal{O}_{i})^{\times}=\prod_{i}\mathrm{Det}((R\otimes\mathcal{O}_{i})^{\times})=\mathrm{Det}(R[G']^{\times}).
\end{equation*}
\end{Prop}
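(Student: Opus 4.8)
The whole proposition rests on a single structural observation, so I would establish that first. Since $C$ is the Sylow $p$-subgroup of the abelian normal subgroup $C\times C'$ of $G=(C\times C')\rtimes L$, it is characteristic there and hence normal in $G$; thus $G'=G/C$ is a genuine quotient, of order $|C'|\cdot|L|$, which is prime to $p$ because $|C'|$ is prime to $pl$ and $L$ is an $l$-group with $l\neq p$. By Maschke's theorem $\mathbb{Q}_{p}[G']$ is then semisimple and $\mathbb{Z}_{p}[G']$ is a maximal $\mathbb{Z}_{p}$-order in it (see \cite{CR. 1}). To see that this order is \emph{split}, i.e.\ that no division algebra occurs, I would compute its radical: the reduction $\mathbb{Z}_{p}[G']/p\mathbb{Z}_{p}[G']=\mathbb{F}_{p}[G']$ is again semisimple, so $\mathrm{rad}(\mathbb{Z}_{p}[G'])=p\,\mathbb{Z}_{p}[G']$. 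Writing the maximal order as $\prod_{i}\mathrm{M}_{n_{i}}(\Lambda_{i})$ with $\Lambda_{i}$ the maximal order in a division algebra $D_{i}$ of index $m_{i}$ over a centre $F_{i}$ with $e_{i}=e(F_{i}/\mathbb{Q}_{p})$, its radical is $\prod_{i}\mathrm{M}_{n_{i}}(\mathrm{rad}\,\Lambda_{i})$; the equality $\mathrm{rad}(\Lambda_{i})=p\Lambda_{i}$ forces $v_{D_{i}}(p)=1$, i.e.\ $m_{i}e_{i}=1$, whence $m_{i}=1$ and $F_{i}$ is unramified. Therefore $\mathbb{Z}_{p}[G']\cong\prod_{i}\mathrm{M}_{n_{i}}(\mathcal{O}_{i})$ with each $\mathcal{O}_{i}$ the ring of integers of a finite (unramified) extension of $\mathbb{Q}_{p}$, as claimed. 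This splitting is the main obstacle; everything afterwards is formal.

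The first chain of equalities then follows by base change and Morita invariance. Tensoring the decomposition with $S\otimes_{\mathbb{Z}_{p}}-$ gives $S[G']\cong\prod_{i}\mathrm{M}_{n_{i}}(B_{i})$ with $B_{i}:=S\otimes_{\mathbb{Z}_{p}}\mathcal{O}_{i}$, and each $B_{i}$ is commutative and, being finite over the (semi)local ring $S$, is semilocal. Since $K_{1}$ is Morita invariant, the determinant image of the $i$-th block is $\mathrm{Det}(GL(B_{i}))=\{\det g:g\in GL(B_{i})\}=B_{i}^{\times}$, the last equality holding because $\det$ is already surjective on $GL_{1}(B_{i})=B_{i}^{\times}$. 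As $S[G']$ is semilocal, Proposition \ref{th:1} gives $\mathrm{Det}(S[G']^{\times})=\mathrm{Det}(GL(S[G']))$, and collecting the blocks yields $\mathrm{Det}(GL(S[G']))=\prod_{i}(S\otimes_{\mathbb{Z}_{p}}\mathcal{O}_{i})^{\times}=\mathrm{Det}(S[G']^{\times})$.

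For the second chain I would take $\Delta$-invariants blockwise. Since $\Delta$ acts only through the coefficient ring $S$ and fixes $\mathbb{Z}_{p}[G']$, the product decomposition is $\Delta$-stable, so $\mathrm{Det}(S[G']^{\times})^{\Delta}=\prod_{i}(B_{i}^{\times})^{\Delta}$. The key descent identity is $(B_{i}^{\times})^{\Delta}=(R\otimes_{\mathbb{Z}_{p}}\mathcal{O}_{i})^{\times}$: as $\mathcal{O}_{i}$ is finite free over $\mathbb{Z}_{p}$, the functor $-\otimes_{\mathbb{Z}_{p}}\mathcal{O}_{i}$ is exact and commutes with $\Delta$-invariants, so $B_{i}^{\Delta}=S^{\Delta}\otimes_{\mathbb{Z}_{p}}\mathcal{O}_{i}=R\otimes_{\mathbb{Z}_{p}}\mathcal{O}_{i}$; and a $\Delta$-fixed unit of $B_{i}$ has a $\Delta$-fixed inverse, so $(B_{i}^{\times})^{\Delta}=(B_{i}^{\Delta})^{\times}=(R\otimes_{\mathbb{Z}_{p}}\mathcal{O}_{i})^{\times}$. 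Running the first-chain argument over $R$ in reverse, namely $R[G']\cong\prod_{i}\mathrm{M}_{n_{i}}(R\otimes_{\mathbb{Z}_{p}}\mathcal{O}_{i})$ with each factor commutative and semilocal and $R[G']$ semilocal so that Proposition \ref{th:1} applies, identifies $\prod_{i}(R\otimes_{\mathbb{Z}_{p}}\mathcal{O}_{i})^{\times}$ with $\mathrm{Det}(R[G']^{\times})$, which closes the chain and proves the proposition. Notably, because the blocks are already split matrix algebras over commutative rings, this avoids any appeal to the ramified descent machinery of Theorem \ref{th:3}; the only real work is the splitting established in the first paragraph.
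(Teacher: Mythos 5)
Your proof is correct, and it amounts to a self-contained replacement for the paper's proof, which consists of nothing more than the citation ``See Theorem 41.1 and Theorem 41.7 in \cite{Re.}''. The content of those two theorems of Reiner is precisely your first paragraph: since $|G'|=|C'|\cdot|L|$ is prime to $p$, the ring $\mathbb{Z}_{p}[G']$ is a maximal order in the semisimple algebra $\mathbb{Q}_{p}[G']$, and it splits as a product of matrix rings over rings of integers (of unramified extensions of $\mathbb{Q}_{p}$). Where the paper invokes Reiner, you rederive the splitting by a radical computation: $\mathbb{F}_{p}[G']$ is semisimple by Maschke, so $\mathrm{rad}(\mathbb{Z}_{p}[G'])=p\,\mathbb{Z}_{p}[G']$, while in a component $\mathrm{M}_{n_{i}}(\Lambda_{i})$, with $\Lambda_{i}$ the maximal order of a division algebra $D_{i}$ of index $m_{i}$ and center $F_{i}$, the radical is generated by a prime element of $D_{i}$, forcing $m_{i}e(F_{i}/\mathbb{Q}_{p})=v_{D_{i}}(p)=1$; this is a clean and correct ramification argument, and as a bonus it shows the $\mathcal{O}_{i}$ are unramified, which the bare statement does not require but which agrees with Reiner's theorem. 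The two chains of equalities --- Morita invariance plus surjectivity of the determinant on each commutative semilocal block $B_{i}=S\otimes_{\mathbb{Z}_{p}}\mathcal{O}_{i}$ for the first; blockwise $\Delta$-invariants together with $(S\otimes_{\mathbb{Z}_{p}}\mathcal{O}_{i})^{\Delta}=R\otimes_{\mathbb{Z}_{p}}\mathcal{O}_{i}$ (freeness of $\mathcal{O}_{i}$ over $\mathbb{Z}_{p}$) and $(B_{i}^{\Delta})^{\times}=(B_{i}^{\times})^{\Delta}$ for the second --- are exactly the formal steps the paper leaves implicit after the citation, and you carry them out correctly, including the needed observation that $\Delta$ fixes the central idempotents so the decomposition is $\Delta$-stable. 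Your closing remark that the descent here is elementary precisely because the blocks are split matrix algebras over commutative rings, with no appeal to Theorem \ref{th:3}, correctly reflects the role this proposition plays in the paper's proof of Theorem \ref{th:7}.
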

\begin{proof}
See Theorem 41.1 and Theorem 41.7 in \cite{Re.}.
\end{proof}

\begin{Le} \label{le:3}
With the previous notation, the map $S[G]^{\times}\rightarrow S[G']^{\times}$ is surjective.
\end{Le}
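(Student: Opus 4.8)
The plan is to identify the kernel of the surjection $\pi\colon S[G]\to S[G']$ and to show that it lies inside the Jacobson radical $\mathrm{rad}(S[G])$; surjectivity on unit groups is then a formal consequence of lifting units modulo the radical.

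First I would record that $\pi$ is the ring homomorphism induced by the projection $G\to G'=G/C$, so that its kernel $I$ is the two-sided ideal of $S[G]$ generated by $\{c-1 : c\in C\}$ and $S[G]/I=S[G']$. One should also note that $C$ is genuinely normal in $G$: it is the $p$-primary part of the normal abelian subgroup $C\times C'$ (recall $C$ is a cyclic $p$-group and $|C'|$ is prime to $p$), hence characteristic in $C\times C'$ and therefore normal in $G=(C\times C')\rtimes L$. This normality is what makes $I$ a two-sided ideal and is used again below.

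The heart of the argument is to prove $I\subseteq\mathrm{rad}(S[G])$. Reducing modulo the maximal ideal $\mathfrak{M}$ of $S$: since $S$ is a complete Noetherian local ring of finite ramification and $S[G]$ is module-finite over $S$, the $\mathfrak{M}$-adic and $p$-adic topologies coincide and $S[G]$ is $\mathfrak{M}$-adically complete, so $1+\mathfrak{M}S[G]\subseteq S[G]^{\times}$ (geometric series) and hence $\mathfrak{M}S[G]\subseteq\mathrm{rad}(S[G])$. Writing $k=S/\mathfrak{M}$, a field of characteristic $p$, and $\rho\colon S[G]\to k[G]$ for the reduction, the standard correspondence for ideals contained in the radical gives $\mathrm{rad}(S[G])=\rho^{-1}(\mathrm{rad}(k[G]))$. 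It therefore suffices to show $\rho(I)\subseteq\mathrm{rad}(k[G])$. But $\rho(I)$ is precisely the kernel $J$ of $k[G]\to k[G']$, i.e. the ideal generated by $\{c-1 : c\in C\}$; because $C$ is a $p$-group and $\mathrm{char}\,k=p$, the augmentation ideal $I(k[C])$ of $k[C]$ is nilpotent, and using the normality of $C$ one has $J^{m}=I(k[C])^{m}k[G]=0$ for suitable $m$. Thus $J$ is nilpotent, hence contained in $\mathrm{rad}(k[G])$, which yields $I\subseteq\mathrm{rad}(S[G])$. Finally, given $\bar{u}\in S[G']^{\times}=(S[G]/I)^{\times}$, I would choose lifts $u$ of $\bar{u}$ and $w$ of $\bar{u}^{-1}$; then $uw,wu\in 1+I\subseteq 1+\mathrm{rad}(S[G])\subseteq S[G]^{\times}$, so $u$ admits both a left and a right inverse, is therefore a unit, and maps to $\bar{u}$, proving surjectivity.

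The main obstacle is the middle step, establishing $I\subseteq\mathrm{rad}(S[G])$, and within it the essential input is the coincidence of primes: $C$ is a $p$-group while the residue field $k$ has characteristic $p$, which is exactly what forces the mod-$\mathfrak{M}$ kernel $J$ to be nilpotent. Were these two primes distinct, $J$ would be an idempotent-containing (semisimple) ideal and the argument would break down. Everything else, namely the identification of $\ker\pi$, the radical correspondence for $\mathfrak{M}S[G]$, and the lifting of units, is routine.
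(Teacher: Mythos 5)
Your proof is correct, and its skeleton coincides with the paper's: identify $\ker(S[G]\to S[G'])$ as the two-sided ideal generated by the elements $c-1$, $c\in C$, prove that it lies in $\mathrm{rad}(S[G])$ --- the essential input in both arguments being exactly the coincidence of primes you point out --- and then lift units along a surjection whose kernel sits in the radical. The differences are in how the sub-steps are carried out. For the containment $\mathfrak{M}S[G]\subseteq\mathrm{rad}(S[G])$ the paper invokes its Lemma \ref{le:15} (the maximal ideal of the discrete valuation ring $S$ is contained in every maximal left ideal of $S[G]$), which needs no completeness hypothesis; you instead use $\mathfrak{M}$-adic completeness of $S[G]$ and the geometric series, which is legitimate here because in the setting where Lemma \ref{le:3} is applied (step 2 of the Det-part) $S$ is assumed $p$-adically complete, the non-complete infinite extensions being handled separately by direct limits in Remark \ref{rm:5}. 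For the nilpotence input, the paper stays inside $S[G]$, showing $(\ker\varphi)^{p^{r}}\subseteq\mathfrak{M}S[G]$ and using that an ideal lies in the radical if and only if some positive power does, whereas you reduce modulo $\mathfrak{M}$, quote nilpotence of the augmentation ideal of $k[C]$ over $k=S/\mathfrak{M}$, and pull back along $\mathrm{rad}(S[G])=\rho^{-1}(\mathrm{rad}(k[G]))$; this is the same computation in different packaging. Finally, where the paper cites Reiner's Theorem 6.10 to identify $S[G]/\mathrm{rad}$ with $S[G']/\mathrm{rad}$ before lifting, your direct two-sided lifting argument ($uw,\,wu\in 1+\mathrm{rad}$) is more elementary and avoids that reference. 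Both routes are sound; yours is more self-contained, while the paper's radical-containment lemma is the more general tool at that one point since it does not require completeness.
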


\begin{proof}
Consider the canonical projection
\begin{equation*}
A=S[G]\stackrel{\varphi}{\twoheadrightarrow}S[G']=B.
\end{equation*}

Firstly, we have to prove, that $\mathrm{Ker}(\varphi)$ is contained in the radical of $A$. We know, that $\mathrm{Ker}(\varphi)=I(C)\subset A$, where $I(C)$ is generated by $1-\sigma,\;\sigma\in C$. Thus there is a positive integer $r$, such that $(\mathrm{Ker}(\varphi))^{p^{r}}\subset\mathfrak{M}$, where $\mathfrak{M}$ is the maximal ideal of $S$. A positive power of an ideal is contained in the radical of a ring if and only if the ideal is contained in the radical (the image of such ideal would be a nilpotent ideal of $A/rad(A)$, hence it is contained in $rad(A/rad(A))=(0)$, see \cite{Re.}, {\S}6). Hence it is enough to prove, that $\mathfrak{M}$ is contained in $rad(A)$. But this follows from the more general lemma below (see Lemma \ref{le:15}), as the radical of the ring $A$ is the intersection of all maximal left ideals of $A$ (see Theorem 6.3 in \cite{Re.}).

Secondly, by the Theorem 6.10 in \cite{Re.} and the first step of the proof $\varphi$ induces an isomorphism
\begin{equation*}
\bar{\varphi}: A/rad(A)\tilde{\rightarrow} B/rad(B),
\end{equation*}
as $\varphi(rad(A))\subset rad(B)$.

Finally, let $x\in B^{\times}$. We denote its image in $(B/rad(B))^{\times}$ by $\bar{x}$. Since $\varphi$ is surjective, we can lift $x$ to an element $y\in A$. The image of $y$ in $A/rad(A)$ denoted $\bar{y}$ is then mapped under $\bar{\varphi}$ onto $\bar{x}$, thus it is contained in $(A/rad(A))^{\times}$ and hence $y\in A^{\times}$, as $1+rad(A)\subset A^{\times}$ (see Theorem 6.5 in \cite{Re.}).
\end{proof}

\begin{Le} \label{le:15}
Let $S$ be a discrete valuation ring, but not a field. Let $\mathfrak{M}$ denote its unique maximal (left) ideal. Assume, that we have an inclusion of the rings $i:S\rightarrow A$, where $A$ is an arbitrary ring, then $i(\mathfrak{M})\cdot S$ is contained in every maximal non-zero left ideal $\mathcal{M}$ of $A$.
\end{Le}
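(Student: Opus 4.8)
The plan is to translate the containment into a statement about the simple module defined by $\mathcal{M}$ and to show that a uniformizer of $S$ kills its canonical generator. Write $\mathfrak{M}=\pi S$ with $\pi$ a uniformizer. It then suffices to prove the single membership $i(\pi)\in\mathcal{M}$: granting it, every element of $i(\mathfrak{M})\cdot S$ is of the form $i(\pi s)=i(s)\,i(\pi)$ for some $s\in S$ (here we use that $S$ is commutative), and this lies in $\mathcal{M}$ because $\mathcal{M}$ is a left ideal containing $i(\pi)$. Thus the whole assertion reduces to showing $i(\pi)\in\mathcal{M}$ for each maximal nonzero left ideal $\mathcal{M}$.

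First I would form the simple left $A$-module $V=A/\mathcal{M}$ attached to the maximal left ideal $\mathcal{M}$, a nonzero module generated by the class $\overline{1}$ of $1$. In the setting where the lemma is used the image $i(S)$ lies in the center of $A$ --- for $A=S[G]$ the scalars $S$ are central --- so left multiplication by $i(\pi)$ commutes with the $A$-action and $i(\pi)V=\mathfrak{M}V$ is an $A$-submodule of $V$. Since $V$ is simple, this submodule is either $0$ or all of $V$, and the membership $i(\pi)\in\mathcal{M}$ is exactly the statement $i(\pi)\,\overline{1}=0$, i.e.\ $\mathfrak{M}V=0$.

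The decisive step is therefore to rule out $\mathfrak{M}V=V$, and this is where I expect the real work and the main obstacle to lie. I would use that $A$ is module-finite over the local ring $S$ (as is $S[G]$, which is $S$-free of rank $|G|$): then $V$, a quotient of $A$, is a nonzero finitely generated $S$-module, and since $\mathfrak{M}=\mathrm{rad}(S)$, Nakayama's lemma gives $\mathfrak{M}V\neq V$. Hence $\mathfrak{M}V=0$, which by the previous paragraphs yields $i(\pi)\in\mathcal{M}$ and completes the proof. This finiteness input is genuinely the crux: it is precisely what prevents $i(\pi)$ from acting invertibly on the simple quotient $V$. Without it the argument breaks down --- for instance, embedding $S$ as scalars into $A=M_{n}(\mathrm{Frac}(S))$ turns $i(\pi)$ into a unit of $A$, which then lies in no proper left ideal --- so the heart of the matter is to record the finiteness (and centrality) of $S$ inside $A$ available in the intended application $A=S[G]$ and to feed it into Nakayama's lemma, the remaining steps being formal.
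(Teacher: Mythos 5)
Your argument is correct, but note that it proves a (necessarily) amended statement, not the printed one: you add the hypotheses that $i(S)$ is central in $A$ and that $A$ is module-finite over $S$. This is not a defect of your proof --- your counterexample is valid, so the lemma is simply false for an ``arbitrary ring'' $A$: in $A=M_{n}(\mathrm{Frac}(S))$ with $n\geq 2$, the set $\mathcal{M}$ of matrices with first column zero is a non-zero maximal left ideal, yet $i(\pi)$ is a unit of $A$ and so lies in no proper left ideal. Your route is also genuinely different from the paper's. The paper contracts $\mathcal{M}$ to $S$: it asserts that $i^{-1}(\mathcal{M})$ is a \emph{non-zero} prime ideal of the discrete valuation ring $S$, hence equals $\mathfrak{M}$, and then applies $i$. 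That assertion is exactly where the paper's own proof has a gap: in your counterexample $i^{-1}(\mathcal{M})=0$, which is prime but not non-zero, and no justification of non-vanishing is (or could be) given at the stated level of generality. Your proof instead goes up to the simple module $V=A/\mathcal{M}$: centrality makes $\mathfrak{M}V$ an $A$-submodule, and Nakayama, fed by module-finiteness, forces $\mathfrak{M}V=0$. Both of your hypotheses hold in the application the paper actually makes of this lemma, namely $A=S[G]$ in the proof of Lemma \ref{le:3}; under them your argument is complete, and they also repair the paper's argument, since centrality gives $i^{-1}(\mathcal{M})=\mathrm{Ann}_{S}(V)$, which is prime by Schur's lemma, while module-finiteness rules out $i^{-1}(\mathcal{M})=0$ (otherwise $V$ would be a non-zero $\mathrm{Frac}(S)$-vector space finitely generated as an $S$-module).

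One caveat worth recording: the paper later re-uses ``the idea in the proof of Lemma \ref{le:3}'' to deduce semilocality of the Iwasawa algebra $\Lambda_{S}(\mathcal{G})$, and that ring is not module-finite over $S$, so neither your Nakayama step nor the paper's printed argument covers that case; there one would need a completeness (pseudocompactness) argument instead. This does not affect your proof of the lemma as it is used for $S[G]$, but it explains why the unrestricted statement was desired --- a statement which, as you showed, is false.
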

\begin{proof}
The preimage $i^{-1}(\mathcal{M})$ of $\mathcal{M}$ is a non-zero prime left ideal of $S$, thus it is a maximal left ideal, hence it coincides with $\mathfrak{M}$. Now applying $i$ we get $i(\mathfrak{M})\cdot S=i(i^{-1}(\mathcal{M}))\cdot S\subseteq \mathcal{M}$.
\end{proof}

\begin{proof}[{Proof of Theorem \ref{th:7}.}] Suppose, that we are given $\mathrm{Det}(z)\in
\mathrm{Det}(S[G]^{\times})^{\Delta}$ and let $z'$ denote the image of $z$ in $S[G']$. Then by Proposition
\ref{prop:2} we know, that we can find $x'\in R[G']^{\times}$ with $\mathrm{Det}(x')=\mathrm{Det}(z')$;
moreover, by Lemma \ref{le:3} we can find $x\in R[G]^{\times}$ with image $x'$ in $R[G']^{\times}$. Thus, to
conclude, it will be sufficient to show, that $\mathrm{Det}(zx^{-1})$ is in $\mathrm{Det}(R[G]^{\times})$.
However, by construction, $\mathrm{Det}(zx^{-1})$ is trivial on characters inflated from $G'$, and
so by Proposition \ref{prop:1} we see that
\begin{equation*}
\mathrm{Det}(zx^{-1})(\chi)=\mathrm{Det}(zx^{-1})(\chi-\chi')\equiv 1\;(\mathrm{mod}\;\mathcal{P}),\;\;\text{for all }\chi.
\end{equation*}
Hence $\mathrm{Det}(zx^{-1})$ is in $\mathrm{Hom}(R_{G},1+\mathcal{P})$, where $R_{G}$ denote the
group of virtual $\overline{L}$-valued characters of $G$. Moreover, since $\mathrm{Det}(zx^{-1})$
is invariant under the action of $\Delta$, $\mathrm{Det}(zx^{-1})\in
\mathrm{Hom}_{\Delta}(R_{G},1+\mathcal{P})$. In the case, where $R$ is the ring of integers of a
finite extension of $\mathbb{Q}_{p}$, the last Hom-group is isomorphic to the finite product of
groups of higher principal units $U^{(1)}$ of finite extensions of $\mathbb{Q}_{p}$, and hence is a
pro-$p$-group. Thus $\mathrm{Det}(zx^{-1})$ is a pro-$p$-element of
$\mathrm{Det}(S[G]^{\times})^{\Delta}$. But by Theorem \ref{th:6} we know, that $\mathrm{Det}(zx^{-1})$
has image in the quotient group $\mathrm{Det}(S[G]^{\times})^{\Delta}/\mathrm{Det}(R[G]^{\times})$ of finite
order, which is prime to $p$. Therefore we may deduce, that $\mathrm{Det}(zx^{-1})$ is in
$\mathrm{Det}(R[G]^{\times})$.   \end{proof}

\subsubsection{Conjecture \ref{cj:1}: proved cases and one generalization.}

\quad Because of our restrictions in the $p$-group case we have proved Conjecture \ref{cj:1} only
for $S=W(\kappa)$ the ring of Witt vectors of an algebraic extension $\kappa$ of $\mathbb{F}_p$.
There are some possible generalizations of this result as will be explained now.

\begin{Rm} \label{rm:4}
Using the reduction step described on the page 92 in \cite{T.}, we can prove Conjecture \ref{cj:1}
for $S=\mathcal{O}_{L}$, where $L$ is the completion of a tamely ramified extension $L^{0}$ of
finite absolute ramification index over $\mathbb{Q}_{p}$, i.e., finite tamely ramified extension of
the ring of Witt vectors of an algebraic extension $\kappa$ of $\mathbb{F}_p$, and $\Delta$ being
an open subgroup of $Gal(L^{0}/\mathbb{Q}_{p})$ containing the inertia group.
\end{Rm}

\begin{Rm} \label{rm:5}
The infinite Galois extensions can always be written as a direct limit (or simply union) of their
finite subextensions, and so their rings of integers, too. Thus, if we have proved Conjecture
\ref{cj:1} for some class of finite Galois extensions of $\mathbb{Q}_{p}$, we can obtain it also
for infinite extensions in the same ind-class, as the Det-map commutes with direct limits (see
Remark \ref{rm:17}).

Explicitly, let $S$ be the ring of integers of an infinite extension $L$ of $\mathbb{Q}_{p}$. Let $L=\underset{i}{\bigcup}L_{i}$ and $S=\underset{i}{\bigcup}S_{i}$, where $L_{i}$ are finite extensions and $S_{i}$ their rings of integers, and we have the statement of Conjecture \ref{cj:1} for all $S_{i}$. Further, let $\Delta$ be an open subgroup of $Gal(L/\mathbb{Q}_{p})$, then
\begin{equation*}
\mathrm{Det}(S[G]^{\times})^{\Delta}=\mathrm{Det}\bigg(\underset{i}{\bigcup}S_{i}[G]^{\times}\bigg)^{\Delta}=\bigg(\underset{i}{\bigcup}\mathrm{Det}(S_{i}[G]^{\times})\bigg)^{\Delta}=\underset{i}{\bigcup}\bigg(\mathrm{Det}(S_{i}[G]^{\times})^{\Delta}\bigg)=
\end{equation*}
\begin{equation*}
\underset{i}{\bigcup}\bigg(\mathrm{Det}(S_{i}^{\Delta}[G]^{\times})\bigg)=\mathrm{Det}\bigg(\underset{i}{\bigcup}S_{i}^{\Delta}[G]^{\times}\bigg)=\mathrm{Det}(S^{\Delta}[G]^{\times}),
\end{equation*}
where $\Delta$ acts on $S_{i}$ through the corresponding quotient group and the union commutes with such defined $\Delta$-action. The maps between $\mathrm{Det}$-groups induced by inclusions of rings are inclusions by Theorem 13 in \cite{Fr.}.

For example, Conjecture \ref{cj:1} holds for $S=\mathcal{O}_{L}$, where $L$ is the maximal
unramified extension of $\mathbb{Q}_{p}$ and $\Delta$ is an open subgroup of
$Gal(L/\mathbb{Q}_{p})$ or using the previous remark $L$ is the maximal tamely ramified extension
of $\mathbb{Q}_{p}$ and $\Delta$ is an open subgroup of $Gal(L/\mathbb{Q}_{p})$ containing the
inertia group.
\end{Rm}

Remarks \ref{rm:2}, \ref{rm:4} and \ref{rm:5} imply

\begin{Th} \label{th:34}
Let $G$ be a finite group. Let $S=\mathcal{O}_{L}$, where $L$ is either an arbitrary (possibly
infinite) tamely ramified extension $L^{0}$ of $\mathbb{Q}_{p}$ (type 1) or the completion of a
tamely ramified extension $L^{0}$ of finite absolute ramification index over $\mathbb{Q}_{p}$ (type
2), and let $\Delta$ be an open subgroup of $Gal(L^{0}/\mathbb{Q}_{p})$ containing the inertia
group. Then
\begin{equation*}
i_{*}:\mathrm{Det}(S^{\Delta}[G]^{\times})\cong\mathrm{Det}(S[G]^{\times})^{\Delta}.
\end{equation*}
\end{Th}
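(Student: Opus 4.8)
The plan is to assemble Theorem \ref{th:34} from the three preceding remarks, organizing the argument according to whether $S$ is $p$-adically complete and whether $L^{0}$ is finite or infinite. The one structural observation that makes everything fit together is the following: since $\Delta$ is open in $Gal(L^{0}/\mathbb{Q}_{p})$ it has finite index, so its fixed field $L^{\Delta}=(L^{0})^{\Delta}$ is a \emph{finite} extension of $\mathbb{Q}_{p}$; and since $\Delta$ contains the inertia group, $L^{\Delta}$ is contained in the maximal unramified subextension and is therefore \emph{unramified} over $\mathbb{Q}_{p}$. Hence $R=S^{\Delta}=\mathcal{O}_{L^{\Delta}}$ is the ring of integers of a finite unramified extension — exactly the hypothesis under which the reduction step of Step 2 and Remark \ref{rm:4} operate. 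I would record this identification first, using the Ax--Sen--Tate computation (as in Remark \ref{rm:27}) to guarantee $S^{\Delta}=\mathcal{O}_{L^{\Delta}}$ also in the completed case.

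Next I would dispose of the complete cases. If $L$ is $p$-adically complete — that is, $L$ of type $2$, or of type $1$ with $L^{0}$ finite, a finite extension of $\mathbb{Q}_{p}$ being its own completion — then $S$ is the ring of integers of a finite tamely ramified extension of $W(\kappa)$ for some algebraic $\kappa/\mathbb{F}_{p}$. This is precisely the situation of Remark \ref{rm:4}, which, via Taylor's reduction from the unramified case (itself supplied by Remark \ref{rm:2} together with the Step 2 reduction through $\mathbb{Q}$-$p$-elementary and $\mathbb{Q}$-$l$-elementary groups, Theorems \ref{th:5}--\ref{th:8}), yields Conjecture \ref{cj:1} and hence the desired isomorphism $i_{*}\colon\mathrm{Det}(R[G]^{\times})\cong\mathrm{Det}(S[G]^{\times})^{\Delta}$ for every finite group $G$.

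It then remains to treat type $1$ with $L^{0}$ infinite. Here I would write $L^{0}=\bigcup_{i}L_{i}$ as the directed union of its finite \emph{Galois} subextensions, all tamely ramified over $\mathbb{Q}_{p}$, with rings of integers $S_{i}$, so that $S=\bigcup_{i}S_{i}$. On each level $\Delta$ acts on $S_{i}$ through the finite quotient $Gal(L_{i}/\mathbb{Q}_{p})$, and its image still contains the corresponding inertia group, so the complete case just proved applies to each $S_{i}$. Invoking Remark \ref{rm:5}, whose chain of equalities uses that $\mathrm{Det}$ commutes with direct limits (Remark \ref{rm:17}) and that the formation of $\Delta$-invariants commutes with the union, I would conclude $\mathrm{Det}(S[G]^{\times})^{\Delta}=\mathrm{Det}(S^{\Delta}[G]^{\times})$, completing the proof.

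The genuinely hard work lives entirely in the results being cited — Step 1 (Theorem \ref{th:3}), the elementary-group reductions, and Taylor's tame reduction behind Remark \ref{rm:4}. The only point in the assembly that demands care is the interchange of $\Delta$-invariants with the direct limit: I must check that the level-wise actions factor through $Gal(L_{i}/\mathbb{Q}_{p})$ compatibly, that each $\mathrm{Det}(S_{i}[G]^{\times})$ is $\Delta$-stable, and that an element fixed by $\Delta$ at the top already lies in, and is fixed at, some finite level, so that $\left(\bigcup_{i}\mathrm{Det}(S_{i}[G]^{\times})\right)^{\Delta}=\bigcup_{i}\left(\mathrm{Det}(S_{i}[G]^{\times})^{\Delta}\right)$ as used in Remark \ref{rm:5}.
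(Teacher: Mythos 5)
Your proposal is correct and takes essentially the same route as the paper, which obtains Theorem \ref{th:34} precisely by combining Remark \ref{rm:2} (the unramified case from Steps 1--2), Remark \ref{rm:4} (Taylor's tame reduction for the $p$-adically complete cases), and Remark \ref{rm:5} (the direct-limit passage to infinite algebraic extensions). Your explicit verifications — that $S^{\Delta}$ is the ring of integers of a finite \emph{unramified} extension because $\Delta$ is open and contains inertia, and that $\Delta$-invariants commute with the directed union level by level — are exactly the points the paper uses implicitly in citing those three remarks.
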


We conclude this subsection with the following result generalising Theorem \ref{th:34}  to the case
of compact p-adic Lie groups and their Iwasawa algebras.   Let $\mathcal{G}$ be a compact p-adic
Lie group,  let $S=\mathcal{O}_{L}$ be as in the theorem and denote by $R=S^\Delta$ the ring of
integers of a finite unramified extension $K$ over $\mathbb{Q}_{p}$. We denote   by $\Delta$ the
Galois group $Gal(L^{0}/K)$ and write
\begin{equation*}
\Lambda_{S}(\mathcal{G}):=S[[\mathcal{G}]] \;\;\; (\text{resp.
}\Lambda_{R}(\mathcal{G}):=R[[\mathcal{G}]])
\end{equation*}
for the Iwasawa algebra of $\mathcal{G}$ with coefficients in $S$ (resp. in $R$). Note that it is a
Noetherian pseudocompact ring (resp. a Noetherian compact ring). For the notion of pseudocompact rings and algebras see \cite{Br.}. It is well known, that
$\Lambda_{R}(\mathcal{G})$ is a semilocal ring (see Proposition 5.2.16 in \cite{NSW}), using the
idea in the proof of Lemma \ref{le:3} we deduce, that the same is true for
$\Lambda_{S}(\mathcal{G})$.

In the following we will use Froehlich's Hom-description as it has been adapted to Iwasawa
theory by Ritter and Weiss in \cite{RW}. We have the following commutative diagram
\begin{equation*}
\begin{array}{ccc}
 K_{1}(\Lambda_{S}(\mathcal{G})) & \xrightarrow{\mathrm{Det}} & \mathrm{Hom}_{G_{L^{0}}}(R_\mathcal{G},\mathcal{O}^{\times}_{\mathbb{C}_{p}}) \\
\Big\uparrow & & \bigcup \\
K_{1}(\Lambda_{R}(\mathcal{G})) & \xrightarrow{\mathrm{Det}} & \mathrm{Hom}_{G_{K}}(R_\mathcal{G},\mathcal{O}^{\times}_{\mathbb{C}_{p}}),\\
\end{array}
\end{equation*}
where $G_{L^{0}}=Gal(\overline{L}/L^{0})$, $G_{L}=Gal(\overline{L}/L)$ and $R_\mathcal{G}$ as
before is the free abelian group on the isomorphism classes of irreducible
$\overline{\mathbb{Q}_p}$-valued Artin representations of $\mathcal{G}$.

Now we are ready to formulate

\begin{Th} \label{th:11}
With the notation as above we have
\begin{equation*}
\mathrm{Det}(K_{1}(\Lambda_{S}(\mathcal{G})))^{\Delta}=\mathrm{Det}(K_{1}(\Lambda_{R}(\mathcal{G}))),
\end{equation*}
where $\Delta$ acts on the $K_{1}$-groups coefficientwise.
\end{Th}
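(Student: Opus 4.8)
The plan is to deduce the statement from the finite-group case (Theorem \ref{th:34}) by reducing everything to the finite quotients $\mathcal{G}/U$, where $U$ ranges over the open normal subgroups of $\mathcal{G}$, and then controlling the passage to the inverse limit over the coefficient ring $R=\mathcal{O}_K$. Two routine reductions come first. Since $\Lambda_R(\mathcal{G})$ and $\Lambda_S(\mathcal{G})$ are semilocal, Proposition \ref{th:1} gives $\mathrm{Det}(K_1(\Lambda_R(\mathcal{G})))=\mathrm{Det}(\Lambda_R(\mathcal{G})^\times)$ and likewise for $S$, so it suffices to compare the determinant images of the two unit groups. Next, every irreducible Artin representation of $\mathcal{G}$ has open kernel, so $R_\mathcal{G}=\varinjlim_U R_{\mathcal{G}/U}$ along inflation, whence the target of $\mathrm{Det}$ decomposes as $\mathrm{Hom}_{G_{L^0}}(R_\mathcal{G},\mathcal{O}_{\mathbb{C}_p}^\times)=\varprojlim_U \mathrm{Hom}_{G_{L^0}}(R_{\mathcal{G}/U},\mathcal{O}_{\mathbb{C}_p}^\times)$, and the same with $G_K$ in place of $G_{L^0}$. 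Because $\Delta=Gal(L^0/K)=G_K/G_{L^0}$, passing to $\Delta$-invariants turns $G_{L^0}$-equivariant homomorphisms into $G_K$-equivariant ones at every finite level.

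The inclusion $\mathrm{Det}(K_1(\Lambda_R(\mathcal{G})))\subseteq \mathrm{Det}(K_1(\Lambda_S(\mathcal{G})))^\Delta$ is immediate from the commutative diagram preceding the theorem, since the lower $\mathrm{Det}$ lands in the $G_K$-, hence $\Delta$-invariant, Hom-group. For the reverse inclusion I would take $f=\mathrm{Det}(u)$ with $u\in\Lambda_S(\mathcal{G})^\times$ and $f$ fixed by $\Delta$. For each $U$ the image $u_U\in S[\mathcal{G}/U]^\times$ satisfies $\mathrm{Det}(u_U)=f\!\mid_{R_{\mathcal{G}/U}}=:f_U$ (the representation only sees the image of $u$ in $S[\mathcal{G}/U]$), and $f_U$ is again $\Delta$-invariant. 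Applying Theorem \ref{th:34} to the finite group $\mathcal{G}/U$ gives $f_U\in\mathrm{Det}(R[\mathcal{G}/U]^\times)=:D_{R,U}$. Thus $f$ lies in $\varprojlim_U D_{R,U}$.

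It remains to identify $\varprojlim_U D_{R,U}$ with $\mathrm{Det}(\Lambda_R(\mathcal{G})^\times)$, i.e.\ to show that a compatible system of finite-level determinants is the determinant of a genuine unit of $\Lambda_R(\mathcal{G})$. Here one uses that $R=\mathcal{O}_K$ is compact: each $R[\mathcal{G}/U]$ is a profinite ring, so $R[\mathcal{G}/U]^\times$ is a profinite group, and in the short exact sequence $1\to K_U\to R[\mathcal{G}/U]^\times\to D_{R,U}\to 1$ both the kernel $K_U$ and the image $D_{R,U}$ are again profinite. One also has $\Lambda_R(\mathcal{G})^\times=\varprojlim_U R[\mathcal{G}/U]^\times$, since a unit is precisely a coherent system of units (inverses being automatically compatible). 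The open normal subgroups of the compact $p$-adic Lie group $\mathcal{G}$ form a countable cofinal family, so the inverse limit functor is exact on this system of profinite groups (that is, $\varprojlim^1 K_U$ vanishes); applying it yields a surjection $\Lambda_R(\mathcal{G})^\times\twoheadrightarrow\varprojlim_U D_{R,U}$, which is exactly $\mathrm{Det}(\Lambda_R(\mathcal{G})^\times)=\varprojlim_U D_{R,U}$. Combining with the previous paragraph gives $\mathrm{Det}(K_1(\Lambda_S(\mathcal{G})))^\Delta=\varprojlim_U D_{R,U}=\mathrm{Det}(K_1(\Lambda_R(\mathcal{G})))$.

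The main obstacle is precisely this last step, the control of $\varprojlim^1$ needed to lift a coherent family of finite-level solutions to a single unit of the Iwasawa algebra; everything else is a formal reduction to Theorem \ref{th:34} and the Hom-description. I would emphasise that the profinite inverse-limit argument is available only on the $R$-side, where $K/\mathbb{Q}_p$ is finite and the unit groups are compact; on the $S$-side the residue field may be infinite and $\Lambda_S(\mathcal{G})^\times$ need not be compact, but this causes no trouble because the $S$-side enters only through the level-wise evaluation $f\mapsto(f_U)$, not through an inverse-limit surjectivity statement.
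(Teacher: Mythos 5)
Your proposal is correct and follows essentially the same route as the paper: reduce via Proposition \ref{th:1} and the Hom-description to the finite quotients $\mathcal{G}/U$, apply Theorem \ref{th:34} there, and then use compactness of the $R$-side unit groups to lift a compatible family of finite-level determinants to $\mathrm{Det}(\Lambda_R(\mathcal{G})^\times)$. The only (cosmetic) difference is that you phrase the lifting step as vanishing of $\varprojlim^1$ for a countable system of profinite groups, whereas the paper invokes directly that an inverse limit of surjections of compact groups is surjective, which needs no countability.
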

\begin{proof}
From the diagram
\begin{equation*}
\begin{array}{ccc}
\Lambda_{S}(\mathcal{G})^{\times} & \xrightarrow{\mathrm{Det}} & \mathrm{Hom}_{G_{L^{0}}}(R_\mathcal{G},\mathcal{O}^{\times}_{\mathbb{C}_{p}}) \\
\bigcup & & \bigcup \\
\Lambda_{R}(\mathcal{G})^{\times} & \xrightarrow{\mathrm{Det}} & \mathrm{Hom}_{G_{K}}(R_\mathcal{G},\mathcal{O}^{\times}_{\mathbb{C}_{p}}),\\
\end{array}
\end{equation*}
which is commutative by the construction, and from Proposition \ref{th:1} we get the first obvious inclusion
\begin{equation*}
\mathrm{Det}(\Lambda_{R}(\mathcal{G})^{\times})=\mathrm{Det}(K_{1}(\Lambda_{R}(\mathcal{G})))\subseteq\mathrm{Det}(K_{1}(\Lambda_{S}(\mathcal{G})))^{\Delta}=\mathrm{Det}(\Lambda_{S}(\mathcal{G})^{\times})^{\Delta}.
\end{equation*}

For the opposite inclusion we use Theorem \ref{th:34}, then we only have to show, how the general case can be reduced to the case
of finite groups. To this end write $\mathcal{G}=\underset{n}{\varprojlim}\;G_{n}$ as inverse limit of finite groups. By Theorem \ref{th:34} we have compatible continuous maps
\begin{equation*}
R[G_{n}]^{\times}\overset{\mathrm{Det}}{\twoheadrightarrow}\mathrm{Det}(K_{1}(\Lambda_{S}(G_{n})))^{\Delta}\hookrightarrow
\mathrm{Hom}_{G_{L^{0}}}(R_{G_{n}},\mathcal{O}^{\times}_{\mathbb{C}_{p}})^{\Delta},
\end{equation*}
where the topology on $\mathrm{Hom}_{G_{L^{0}}}(R_{G_{n}},\mathcal{O}^{\times}_{\mathbb{C}_{p}})$ is
induced from the valuation topology on $\mathbb{C}_{p}$. Taking the inverse limit yields, by the
compactness of $\Lambda_{R}(\mathcal{G})^{\times}=\underset{n}{\varprojlim}(R/\pi^{n}[G_{n}])^{\times}$ and by
letting $R_\mathcal{G}=\underset{n}{\varinjlim}\;R_{G_{n}}$, a factorization of the homomorphism
Det into
\begin{equation*}
\Lambda_{R}(\mathcal{G})^{\times}
\overset{\mathrm{Det}}{\twoheadrightarrow}(\underset{n}{\varprojlim}\;\mathrm{Det}(K_{1}(\Lambda_{S}(G_{n}))))^{\Delta}\hookrightarrow
\mathrm{Hom}_{G_{K}}(R_\mathcal{G},\mathcal{O}^{\times}_{\mathbb{C}_{p}}).
\end{equation*}
The claim follows, because denoting by
\begin{equation*}
res_{n}:\mathrm{Hom}_{G_{L^{0}}}(R_\mathcal{G},\mathcal{O}^{\times}_{\mathbb{C}_{p}})\rightarrow
\mathrm{Hom}_{G_{L^{0}}}(R_{G_{n}},\mathcal{O}^{\times}_{\mathbb{C}_{p}})
\end{equation*}
the restriction we obtain from the universal mapping property for
\begin{equation*}
\underset{n}{\varprojlim}\;\mathrm{Hom}_{G_{L^{0}}}(R_{G_{n}},\mathcal{O}^{\times}_{\mathbb{C}_{p}})\cong
\mathrm{Hom}_{G_{L^{0}}}(R_\mathcal{G},\mathcal{O}^{\times}_{\mathbb{C}_{p}})
\end{equation*}
inclusions
\begin{equation*}
\mathrm{Det}(K_{1}(\Lambda_{S}(\mathcal{G})))\subseteq
\underset{n}{\varprojlim}\;\mathrm{Im}(res_{n}\circ\mathrm{Det})\subseteq
\underset{n}{\varprojlim}\;\mathrm{Det}(K_{1}(\Lambda_{S}(G_{n}))),
\end{equation*}
whence
\begin{equation*}
\mathrm{Det}(K_{1}(\Lambda_{S}(\mathcal{G})))^{\Delta}\subseteq
\mathrm{Det}(\Lambda_{R}(\mathcal{G})^{\times})=\mathrm{Det}(K_{1}(\Lambda_{R}(\mathcal{G}))).
\end{equation*}
\end{proof}

For  applications of the theorem above in number theory  see \cite{BV}.

\begin{flushleft}
\textbf{End of the $\mathrm{Det}$-part.}
\end{flushleft}

\subsection{The $ SK_{1}$-part.}

\quad From \cite{O.} we have the following

\begin{Th} \label{th:15}
Let $R$ be the ring of integers in any finite extension $K$ of $\mathbb{Q}_{p}$. Then for any $p$-group $G$, there is an isomorphism
\begin{equation*}
\Theta_{RG}:  SK_{1}(R[G])\tilde{\rightarrow} \mathrm{H}_{2}(G)/\mathrm{H}^{ab}_{2}(G),
\end{equation*}
where $\mathrm{H}^{ab}_{2}(G)=\mathrm{Im}\big[\sum\left\{\mathrm{H}_{2}(H):\;H\subseteq G,\;H\text{ abelian}\right\}\xrightarrow{\sum\mathrm{Ind}}\mathrm{H}_{2}(G)\big]$

If $L\supseteq K$ is a finite extension, and if $S\subseteq L$ is the ring of integers , then \newline
(i) $i_{*}:  SK_{1}(R[G])\rightarrow  SK_{1}(S[G])$ (induced by inclusion) is an isomorphism, if $L/K$ is totally ramified; and\newline
(ii) $trf:  SK_{1}(S[G])\rightarrow  SK_{1}(R[G])$ (the transfer) is an isomorphism, if $L/K$ is unramified.
\end{Th}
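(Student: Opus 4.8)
The plan is to derive all three assertions from Oliver's computation of $SK_1$ of $p$-adic group rings, since this is precisely the content of \cite{O.}. The central tool is Oliver's integral $p$-adic logarithm on $K_1(R[G])$, which linearizes the problem: combined with the relative $K$-theory localization sequence it identifies $SK_1(R[G])$ with the cokernel of the induction map $\bigoplus_{H}\mathrm{H}_2(H)\xrightarrow{\sum\mathrm{Ind}}\mathrm{H}_2(G)$ ranging over abelian subgroups $H\subseteq G$. By the very definition of $\mathrm{H}^{ab}_2(G)$ in the statement, this cokernel is $\mathrm{H}_2(G)/\mathrm{H}^{ab}_2(G)$, which produces $\Theta_{RG}$. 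I would first set up the logarithm and the associated exact sequence, then read off the homological identification of the cokernel; the fact that $G$ is a $p$-group is what makes the relevant $K_2$-contributions and the logarithm behave well enough to pin the answer down to pure group homology.

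For the functoriality I would exploit that the target $\mathrm{H}_2(G)/\mathrm{H}^{ab}_2(G)$ depends only on $G$ and not at all on the coefficient ring, so that naturality of $\Theta$ in $R$ reduces each claim to computing the endomorphism of $\mathrm{H}_2(G)/\mathrm{H}^{ab}_2(G)$ induced by $i_*$, respectively by $trf$. The decisive point is how the logarithm interacts with the residue field: the homological data is in effect recorded at the residue-field level, so a totally ramified extension (which leaves the residue field unchanged) induces the identity on the target, whereas an unramified extension of residue degree $f$ makes $i_*$ act as multiplication by $f$ while the transfer $trf$ acts as the identity. This is consistent with the relations $trf\circ i_*=f$ and $i_*\circ trf=f$: in the totally ramified case ($f=1$ on the residue field) $i_*$ is the identity on the target, and in the unramified case $trf$ is, so neither map need be an isomorphism in the opposite case because $f$ may be divisible by $p$.

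Concretely, part (i) follows because for totally ramified $L/K$ the square
\[
\begin{array}{ccc}
SK_1(R[G]) & \xrightarrow{\Theta_{RG}} & \mathrm{H}_2(G)/\mathrm{H}^{ab}_2(G)\\
\Big\downarrow{i_*} & & \Big\|\\
SK_1(S[G]) & \xrightarrow{\Theta_{SG}} & \mathrm{H}_2(G)/\mathrm{H}^{ab}_2(G)
\end{array}
\]
commutes with the right-hand vertical map equal to the identity, forcing $i_*$ to be an isomorphism; part (ii) follows from the analogous square for $trf$ in the unramified case. The main obstacle is exactly the first step of the functoriality argument, namely establishing naturality of $\Theta$ together with the precise residue-field dependence of the logarithm, i.e.\ verifying that $i_*$ and $trf$ really induce the identity in their respective cases. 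This is Oliver's delicate analysis of the integral logarithm under change of coefficient ring, and for the purposes of the present paper it suffices to invoke it directly from \cite{O.}.
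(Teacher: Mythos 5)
Your proposal is correct and takes essentially the same route as the paper: the paper's entire proof is a citation to the proof of Theorem 8.7 in \cite{O.}, which is precisely the result (Oliver's integral $p$-adic logarithm, the homological identification of $SK_1(R[G])$ with $\mathrm{H}_2(G)/\mathrm{H}^{ab}_2(G)$, and its behaviour under totally ramified resp.\ unramified coefficient extension) that you sketch and then invoke directly. Since your final justification is the same appeal to Oliver, the two arguments agree in substance.
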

\begin{proof}
See proof of Theorem 8.7 in \cite{O.}.
\end{proof}

We see, that $ SK_{1}(S[G])$ is independent of $S$. Note also, that $i_{*}$ and $trf$ are Galois
homomorphisms, hence $ SK_{1}(S[G])$ has trivial Galois action. In order to treat infinite
algebraic extensions of $\mathbb{Q}_{p}$ and the analogous descent statement of the introduction
for $SK_{1}$-groups we have to describe the maps $i_{*}$ induced by inclusions, as they appear in
the direct limits (see Remark \ref{rm:5} and Remark \ref{rm:17}).

Now we assume  that $K$ is unramified over $\mathbb{Q}_{p}$, then from Proposition 21 (i) in
\cite{O. 2}, which is also valid for $i_{*}$ with the same argument, we have commutative squares

\begin{equation*}
\begin{array}{ccc}
 SK_{1}(R[G]) & \xrightarrow{\Theta_{RG}} & \mathrm{H}_{2}(G)/\mathrm{H}^{ab}_{2}(G) \\
\Big\downarrow{i_{*}} & & \Big\downarrow{?} \\
 SK_{1}(S[G]) & \xrightarrow{\Theta_{SG}} & \mathrm{H}_{2}(G)/\mathrm{H}^{ab}_{2}(G),\\
\end{array}
\end{equation*}
and
\begin{equation*}
\begin{array}{ccc}
 SK_{1}(S[G]) & \xrightarrow{\Theta_{SG}} & \mathrm{H}_{2}(G)/\mathrm{H}^{ab}_{2}(G)\\
\Big\downarrow{trf} & & \Big\downarrow{id} \\
 SK_{1}(R[G]) & \xrightarrow{\Theta_{RG}} & \mathrm{H}_{2}(G)/\mathrm{H}^{ab}_{2}(G) \\
\end{array}
\end{equation*}
where $\Theta_{RG}$, $\Theta_{SG}$ and $trf$ are isomorphisms. To describe $i_{*}$ and $?$ we need
the following

\begin{Le} \label{le:17}
With the previous notation the map
\begin{equation*}
trf\circ i_{*}: K_{1}(R[G])\rightarrow K_{1}(S[G]) \rightarrow K_{1}(R[G])
\end{equation*}
is multiplication by $n=[L:K]$.
\end{Le}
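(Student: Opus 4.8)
The plan is to reduce the statement to the general fact that, for a ring extension $A\hookrightarrow B$ in which $B$ is free of finite rank $n$ as an $A$-module, the composite $trf\circ i_*$ on $K_1(A)$ is multiplication by $n$; here $A=R[G]$ and $B=S[G]$. The first step is therefore to record that $S$ is a free $R$-module of rank $n=[L:K]$: since $R$ is a complete discrete valuation ring and $S$ is the integral closure of $R$ in the finite extension $L$, the module $S$ is $R$-free of rank $[L:K]$. Fixing an $R$-basis $s_1,\dots,s_n$ of $S$ with $s_1=1$ then yields an $R[G]$-basis of $S[G]=S\otimes_R R[G]$, so that $S[G]$ is free of rank $n$ as a (left and right) $R[G]$-module, and the transfer $trf\colon K_1(S[G])\to K_1(R[G])$, defined by restriction of scalars along this finite free extension, is available.

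The key observation needed for the computation is that the chosen basis elements $s_i\in S$ are \emph{central} in $S[G]$: each $s_i$ commutes with the coefficients in $R$ and with the group elements $g\in G$, hence with all of $R[G]$. I would then compute the composite directly on a representative. Take a class in $K_1(R[G])$ represented by $\alpha\in GL_m(R[G])$; its image under $i_*$ is the automorphism of the free $S[G]$-module $S[G]^m$ given by the same matrix $\alpha$, now regarded over $S[G]$. Restriction of scalars identifies $S[G]^m$ with the free $R[G]$-module $\bigoplus_{i=1}^n s_i\,R[G]^m$ of rank $mn$. Because the entries of $\alpha$ lie in $R[G]$ and each $s_i$ is central, the automorphism preserves every summand $s_i\,R[G]^m$ and acts on it exactly by $\alpha$. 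Thus, with respect to a suitable ordering of this basis, $i_*(\alpha)$ is the block-diagonal $R[G]$-automorphism $\mathrm{diag}(\alpha,\dots,\alpha)$ with $n$ identical blocks, whence $trf(i_*([\alpha]))=n\cdot[\alpha]$ in $K_1(R[G])$.

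The one point that requires care, and which is really the crux, is the centrality of the $s_i$: it is precisely this that forces the restricted automorphism to split as $n$ identical copies of $\alpha$ rather than as a general $mn\times mn$ matrix over $R[G]$ whose $K_1$-class we could not control. For a noncommutative coefficient algebra this would fail, but here the coefficients $S$ form a commutative central subring of $S[G]$, so the argument goes through verbatim for every finite group $G$. Alternatively, one can obtain the same conclusion from the projection formula $trf(i_*(x)\cdot y)=x\cdot trf(y)$ together with the identity $trf([S[G]])=n\cdot[R[G]]$ in $K_0(R[G])$, but the explicit matrix computation above is more transparent and self-contained.
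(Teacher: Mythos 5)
Your proof is correct and takes essentially the same route as the paper: the paper invokes Oliver's Proposition 1.18 to identify $trf\circ i_{*}$ with tensoring by the $(R[G],R[G])$-bimodule $S[G]\cong S\otimes_{R}R[G]$ (bimodule structure through the second factor) and then uses $S\cong R^{n}$ to conclude this bimodule is $R[G]^{n}$, which is exactly the content of your observation that the central basis elements $s_{i}$ split the restricted automorphism into $n$ identical blocks $\alpha$. The only difference is presentational: you unwind the citation into an explicit block-diagonal matrix computation, making the argument self-contained.
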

\begin{proof}
By Proposition 1.18 in \cite{O.} the composite $trf\circ i_{*}$ is induced by tensoring over $R[G]$ with $S[G]\cong S\otimes_{R}R[G]$ regarded as an $(R[G],R[G])$-bimodule, where the bimodule structure on $S\otimes_{R}R[G]$ is given through the second factor in the natural way. Since $S$ is a free $R$-modules of rank $n$, $S\otimes_{R}R[G]\cong R[G]^{n}$ as $(R[G],R[G])$-bimodules, and so $trf\circ i_{*}$ is multiplication by $n$ on $K_{1}(R[G])$ (written additively).
\end{proof}

The map $trf\circ i_{*}$ on $K_{1}(R[G])$ corresponds via $\Theta_{RG}$ to the map
$\mathrm{H}_{2}(G)/\mathrm{H}^{ab}_{2}(G)\xrightarrow{n\cdot}\mathrm{H}_{2}(G)/\mathrm{H}^{ab}_{2}(G)$,
and since $trf$ corresponds to the identity map, $i_{*}$ also corresponds to the multiplication by
$n$. From the Theorem 3.14 in \cite{O.} we know, that $ SK_{1}(R[G])$ (hence
$\mathrm{H}_{2}(G)/\mathrm{H}^{ab}_{2}(G)$) is a finite $p$-group, so that we have proved the

\begin{Th} \label{th:21}
With the notation as above $i_{*}$ is an isomorphism in the following two cases \newline
(i) if $L/K$ is totally ramified, \newline
(ii) if $L/K$ is unramified and $p\nmid n$. In this case $i_{*}$ corresponds via $\Theta_{RG}$ to the multiplication by $n$ on the finite $p$-group (written additively).

If $L/K$ is unramified and $p | n$, then $i_{*}$ still corresponds via $\Theta_{RG}$ to the multiplication by $n$ on the finite $p$-group, which is neither surjective nor injective, as finite $p$-groups always have $p$-torsion elements.
\end{Th}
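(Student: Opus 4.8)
The plan is to read off the statement of Theorem \ref{th:21} directly from the structural facts already assembled just before it, so the ``proof'' is really a matter of organizing the two commutative squares together with Lemma \ref{le:17}. First I would recall that $\Theta_{RG}$ and $\Theta_{SG}$ are isomorphisms (Theorem \ref{th:15}) and that the transfer map $trf$ corresponds under these isomorphisms to the identity on $\mathrm{H}_{2}(G)/\mathrm{H}^{ab}_{2}(G)$, as displayed in the second commutative square. The key computation, already carried out in Lemma \ref{le:17}, is that the composite $trf\circ i_{*}$ on $K_{1}(R[G])$ is multiplication by $n=[L:K]$, and since this composite preserves $SK_{1}$ it induces multiplication by $n$ on $SK_{1}(R[G])$, hence via $\Theta_{RG}$ on $\mathrm{H}_{2}(G)/\mathrm{H}^{ab}_{2}(G)$.

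From this the identification of $i_{*}$ is forced: since $trf$ corresponds to the identity, the relation $\Theta_{RG}\circ(trf\circ i_{*})=(n\cdot)\circ\Theta_{RG}$ together with $\Theta_{RG}\circ trf=\Theta_{SG}$ (with $\Theta_{SG}$ the relevant isomorphism) pins down the unknown right-hand vertical map $?$ in the first square to be multiplication by $n$, and therefore $i_{*}$ itself corresponds to multiplication by $n$ on $\mathrm{H}_{2}(G)/\mathrm{H}^{ab}_{2}(G)$. I would then invoke Theorem 3.14 in \cite{O.} that $SK_{1}(R[G])$, and hence $\mathrm{H}_{2}(G)/\mathrm{H}^{ab}_{2}(G)$, is a finite $p$-group. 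On a finite $p$-group, multiplication by $n$ is an automorphism precisely when $p\nmid n$ and is neither injective nor surjective when $p\mid n$, since such a group always contains $p$-torsion; this yields case (ii) and the final remark.

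For case (i), the totally ramified case, there is nothing new to prove: Theorem \ref{th:15}(i) already asserts that $i_{*}\colon SK_{1}(R[G])\to SK_{1}(S[G])$ is an isomorphism when $L/K$ is totally ramified, so I would simply cite that statement. The only mild subtlety is that the two commutative squares and Lemma \ref{le:17} are stated under the running assumption that $K$ is unramified over $\mathbb{Q}_{p}$, which is exactly the setting needed for case (ii); case (i) does not rely on that hypothesis and is independent.

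The main obstacle is not really a computational one but a matter of bookkeeping: one must be careful that the Galois-descent statements for $SK_{1}$ throughout this subsection are first established for finite $p$-groups $G$, and verify that the map $?$ is consistently identified across the diagrams so that $i_{*}$ really does correspond to $(n\cdot)$ and not to some twisted variant. Once the compatibility of $trf\circ i_{*}=(n\cdot)$ with the identification $trf\leftrightarrow\mathrm{id}$ is confirmed, everything else is elementary arithmetic of finite abelian $p$-groups, so I expect the proof to be short.
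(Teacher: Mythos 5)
Your proposal is correct and follows essentially the same route as the paper: both rest on the two commutative squares, the identification $trf\leftrightarrow\mathrm{id}$, Lemma \ref{le:17} giving $trf\circ i_{*}=(n\cdot)$, Oliver's Theorem 3.14 that $SK_{1}(R[G])$ is a finite $p$-group, and a direct citation of Theorem \ref{th:15}(i) for the totally ramified case. The elementary arithmetic of multiplication by $n$ on a finite $p$-group then yields case (ii) and the final remark exactly as in the paper.
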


\begin{Cr} \label{cr:1}
The groups $ SK_{1}(R[G])$ and $ SK_{1}(S[G])$ are always isomorphic (as abstract groups with the
(trivial) action of $Gal(L/K)$), but the statement of the introduction for $SK_{1}$-groups, i.e.
\begin{equation} \label{eq:42}
i_{*}: SK_{1}(R[G])\cong  SK_{1}(S[G])^{\Delta}\;\;\;(\Delta=Gal(L/K)),
\end{equation}
holds only in the cases (i) and (ii) of Theorem \ref{th:21}.
\end{Cr}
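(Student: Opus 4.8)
The plan is to read off Corollary \ref{cr:1} directly from Theorem \ref{th:21} together with Theorem \ref{th:15}, so that almost all the work has already been done. The first assertion, that $SK_{1}(R[G])$ and $SK_{1}(S[G])$ are abstractly isomorphic with trivial Galois action, I would extract as follows. By Theorem \ref{th:15} both groups are isomorphic to the single group $\mathrm{H}_{2}(G)/\mathrm{H}^{ab}_{2}(G)$ via $\Theta_{RG}$ and $\Theta_{SG}$ respectively, so $SK_{1}(R[G])\cong SK_{1}(S[G])$ as abstract groups. That the $Gal(L/K)$-action is trivial was already observed in the text immediately after Theorem \ref{th:15}: the maps $i_{*}$ and $trf$ are Galois homomorphisms and $SK_{1}(S[G])$ is independent of $S$, forcing the action to be trivial. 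Hence $SK_{1}(S[G])^{\Delta}=SK_{1}(S[G])$, and the abstract isomorphism part is immediate.

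For the equality \eqref{eq:42} itself I would argue case by case using the trichotomy of Theorem \ref{th:21}, always keeping in mind that the displayed map must be precisely $i_{*}$. Since the $\Delta$-action is trivial we have $SK_{1}(S[G])^{\Delta}=SK_{1}(S[G])$, so \eqref{eq:42} is nothing but the assertion that $i_{*}:SK_{1}(R[G])\to SK_{1}(S[G])$ is an isomorphism. In case (i), $L/K$ totally ramified, Theorem \ref{th:21}(i) gives exactly that $i_{*}$ is an isomorphism, so \eqref{eq:42} holds. In case (ii), $L/K$ unramified with $p\nmid n$, Theorem \ref{th:21}(ii) again gives that $i_{*}$ is an isomorphism, so \eqref{eq:42} holds. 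It remains to rule out the remaining case, $L/K$ unramified with $p\mid n$, and here the final paragraph of Theorem \ref{th:21} does the job: $i_{*}$ corresponds via $\Theta_{RG}$ to multiplication by $n$ on the finite $p$-group $\mathrm{H}_{2}(G)/\mathrm{H}^{ab}_{2}(G)$, which is neither injective nor surjective whenever this group is nontrivial, because a nontrivial finite $p$-group always contains $p$-torsion and $p\mid n$. Thus $i_{*}$ fails to be an isomorphism, and since $SK_{1}(S[G])^{\Delta}=SK_{1}(S[G])$ strictly contains the image of $i_{*}$, \eqref{eq:42} fails.

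The only genuinely subtle point, and the one I would be most careful about, is the interpretation of the target $SK_{1}(S[G])^{\Delta}$ once the Galois action is known to be trivial: one must be sure that taking $\Delta$-invariants changes nothing, so that the descent statement collapses to the mere question of whether $i_{*}$ is an isomorphism. This is where the earlier observation about triviality of the action is essential, and it is precisely what allows the single multiplication-by-$n$ picture of Theorem \ref{th:21} to decide the matter. A secondary subtlety is that when $\mathrm{H}_{2}(G)/\mathrm{H}^{ab}_{2}(G)$ happens to be trivial, even the bad case $p\mid n$ yields an isomorphism vacuously; I would note that the failure of \eqref{eq:42} in case (iii) is asserted only insofar as the relevant finite $p$-group is nonzero, and that the corollary should be read accordingly. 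With these bookkeeping points settled, the proof is essentially a restatement, and I expect no hard computational obstacle — the entire content has been front-loaded into Theorems \ref{th:15} and \ref{th:21}.
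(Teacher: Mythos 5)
Your proposal is correct and takes essentially the same approach as the paper: Corollary \ref{cr:1} is read off exactly as you do, from the triviality of the Galois action on $SK_{1}(S[G])$ (noted after Theorem \ref{th:15}), which collapses $SK_{1}(S[G])^{\Delta}$ to $SK_{1}(S[G])$, combined with the trichotomy of Theorem \ref{th:21}. Your caveat that the failure in the case $p\mid n$ presupposes $SK_{1}(R[G])\neq 1$ is also consistent with the paper's own reading, cf.\ Remark \ref{rm:20}.
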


\begin{Cr} \label{cr:2}
Let $M$ be an infinite algebraic extension of $\mathbb{Q}_{p}$ and let $M^{0}$ be the maximal
unramified extension of $\mathbb{Q}_{p}$ contained in $M$. We write $M$ as the direct limit (union)
of its finite subextensions and use Remark \ref{rm:17} and the theorem above to get the following
result:\newline If $\;p^{\infty}$ divides $[M^{0}:\mathbb{Q}_{p}]$ (as supernatural numbers), then
$ SK_{1}(\mathcal{O}_{M}[G])=1$ for every finite $p$-group $G$.
\end{Cr}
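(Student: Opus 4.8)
Looking at Corollary \ref{cr:2}, I need to prove that $SK_1(\mathcal{O}_M[G]) = 1$ when $p^\infty$ divides $[M^0:\mathbb{Q}_p]$. Let me think about the structure.

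The key is that $M^0$ is the maximal unramified subextension, and $p^\infty \mid [M^0:\mathbb{Q}_p]$ means the unramified part is "infinitely divisible by $p$" in the supernatural number sense. I should express $\mathcal{O}_M$ as a direct limit and use the behavior of $i_*$ on $SK_1$ under unramified extensions from Theorem \ref{th:21}.

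The plan is to write $M = \bigcup_i M_i$ as a directed union of finite subextensions $M_i$ of $\mathbb{Q}_p$, with rings of integers $S_i = \mathcal{O}_{M_i}$, so that $\mathcal{O}_M = \bigcup_i S_i$. Since $SK_1$ commutes with direct limits (Remark \ref{rm:17}), I have
\[
SK_1(\mathcal{O}_M[G]) = \varinjlim_i SK_1(S_i[G]),
\]
where the transition maps are the $i_*$ induced by inclusions $S_i \hookrightarrow S_j$. By Theorem \ref{th:15} each $SK_1(S_i[G])$ is isomorphic to the fixed finite $p$-group $\mathrm{H}_2(G)/\mathrm{H}_2^{ab}(G)$, and these isomorphisms $\Theta_{S_iG}$ are compatible as described. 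The whole point is to show that the direct limit of this system vanishes.

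Next I would analyze the transition maps via Theorem \ref{th:21}. Any finite extension $M_i \subseteq M_j$ factors as an unramified part followed by a totally ramified part; on $SK_1$, totally ramified extensions induce isomorphisms (part (i)), so only the unramified degrees matter. Using the multiplicativity of the transition maps through $\Theta$, the map $SK_1(S_i[G]) \to SK_1(S_j[G])$ corresponds to multiplication by the residue degree $f_{ji} = [\kappa_{M_j}:\kappa_{M_i}]$ (the degree of the unramified part) on the finite $p$-group $N := \mathrm{H}_2(G)/\mathrm{H}_2^{ab}(G)$. Since $p^\infty \mid [M^0:\mathbb{Q}_p]$, for each $i$ and each $k$ I can find $j$ with $p^k \mid f_{ji}$; because $N$ is a finite $p$-group it is annihilated by $p^k$ for $k$ large enough (say $p^k N = 0$). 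Hence for any element $x \in SK_1(S_i[G])$ there is a later stage $j$ where its image is $f_{ji}\cdot x = 0$, so $x$ dies in the colimit.

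The main obstacle, and the step requiring care, is justifying that the transition maps are exactly multiplication by the unramified degree after identifying all the groups via $\Theta$. Theorem \ref{th:21} only directly addresses the two-step cases (totally ramified, or unramified with $p \nmid n$) and the final remark about unramified with $p \mid n$; I must assemble these into a statement for a general transition $S_i \hookrightarrow S_j$ in the directed system. The cleanest route is to factor each inclusion through the maximal unramified subextension, invoke part (i) for the totally ramified step and the $\Theta$-compatibility (multiplication by residue degree) for the unramified step, and check these identifications are functorial along the directed system so the colimit computation is legitimate. Once this compatibility is in place, the vanishing is immediate: every element is killed by a sufficiently $p$-divisible transition map, so the colimit of finite $p$-groups is trivial and $SK_1(\mathcal{O}_M[G]) = 1$.
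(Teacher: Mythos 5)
Your proposal is correct and follows essentially the same route the paper intends: the corollary's statement itself prescribes writing $M$ as the union of its finite subextensions, invoking Remark \ref{rm:17} to commute $SK_1$ with the direct limit, and using Theorem \ref{th:21} (together with the $\Theta$-identifications of Theorem \ref{th:15}) to see each transition map as multiplication by the residue degree up to a totally ramified isomorphism, so that the hypothesis $p^\infty \mid [M^0:\mathbb{Q}_p]$ kills every element of the fixed finite $p$-group $\mathrm{H}_2(G)/\mathrm{H}_2^{ab}(G)$ in the colimit. Your explicit attention to factoring each inclusion through its maximal unramified subextension and to the functoriality of the $\Theta$-identifications is exactly the (unwritten) verification the paper's one-line argument relies on.
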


\begin{Rm} \label{rm:20}
From Corollary \ref{cr:2} we can obtain the generalization of Corollary \ref{cr:1} for infinite
extensions: Let $L$ be an infinite algebraic extension of $\mathbb{Q}_{p}$ and let $K=L^{\Delta}$,
where $\Delta$ is an open subgroup of $Gal(L/\mathbb{Q}_{p})$. Then the statement \eqref{eq:42}
 holds only in the cases (i) and (ii) of Theorem \ref{th:21}, here $p\nmid n$ as
supernatural numbers. In the case, where $\;p^{\infty}$ divides $[L^{0}:\mathbb{Q}_{p}]$ and $ SK_{1}(R[G])\neq
1$, $ SK_{1}(R[G])$ and $ SK_{1}(S[G])$ are not isomorphic even as abstract groups. See
\cite[Example 8.11]{O.} for an example of non-trivial $SK_{1}(\mathbb{Z}_p[G]).$
\end{Rm}

Now we generalize our results  to the case of an arbitrary finite group $G$. Note, that Theorem
3.14 in \cite{O.} and Lemma \ref{le:17} still hold in this case. From \cite{O. 3} we have the

\begin{Th} \label{th:22}
Let $R$ be the ring of integers in any finite extension $K$ of $\mathbb{Q}_{p}$ and let $G$ be a finite group. Let $L\supseteq K$ be a finite extension, and let $S\subseteq L$ be the ring of integers , then \newline
(ii) $i_{*}:  SK_{1}(R[G])\rightarrow  SK_{1}(S[G])$ is an isomorphism, if $L/K$ is totally ramified; \newline
(iii) $trf:  SK_{1}(S[G])\rightarrow  SK_{1}(R[G])$ is onto, if $L/K$ is unramified.
\end{Th}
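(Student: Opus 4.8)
The plan is to reduce everything to the $p$-group case already settled in Theorem \ref{th:15}, by the same induction-theoretic machinery used in the Application paragraph of the Det-part. The starting point is the observation (recorded above) that Theorem 3.14 of \cite{O.} remains valid for arbitrary finite $G$, so both $SK_1(R[G])$ and $SK_1(S[G])$ are finite $p$-groups. By 12.6 in \cite{S.} there are an integer $l$ prime to $p$, $\mathbb{Q}$-$p$-elementary subgroups $H_i\le G$, integers $n_i$, and $\theta_i\in \mathrm{K}_0(\mathbb{Q}_p[H_i])$ with $l\cdot 1_G=\sum_i n_i\,\mathrm{Ind}^G_{H_i}(\theta_i)$. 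Using the Frobenius-module structure of $SK_1(R[-])$ over the representation ring, Frobenius reciprocity gives, for every $x\in SK_1(R[G])$, the identity $l\cdot x=\sum_i n_i\,\mathrm{Ind}^G_{H_i}(\theta_i\cdot\mathrm{Res}^{H_i}_G(x))$. Since multiplication by $l$ is an automorphism of the finite $p$-group $SK_1(R[G])$, this shows at once that $SK_1(R[G])$ is both \emph{generated by induction from} and \emph{detected by restriction to} its $\mathbb{Q}$-$p$-elementary subgroups, and likewise for $S$.

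Because $i_{*}$ and $trf$ commute with $\mathrm{Ind}^G_{H}$ and $\mathrm{Res}^{H}_G$ (being maps of the associated Mackey/Frobenius structures, and transfer satisfying the usual compatibility $\mathrm{Ind}\circ trf=trf\circ\mathrm{Ind}$), both assertions reduce to the case where $G=C\rtimes P$ is itself $\mathbb{Q}$-$p$-elementary. Here I would decompose, as in \eqref{eq:4}, $R[G]=\prod_m R[m]\circ P$ and $S[G]=\prod_m S[m]\circ P$ into twisted group rings; this decomposition is defined over $\mathbb{Z}_p$ and is therefore respected by $i_{*}$ and by $trf$, so it suffices to treat one factor. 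For a fixed divisor $m$ of $|C|$, the restriction map $r_m$ of \eqref{eq:5} (injective by (3.8) on p.\ 69 of \cite{T.}, and an isomorphism onto the $A_m$-invariants at the $\mathrm{Det}$-level by Theorem \ref{th:4}) should intertwine $i_{*}$ and $trf$ and carry the twisted group ring $R[m]\circ P$ to the honest group ring $R[m][H_m]$, whose group $H_m=\ker(\alpha_m)\le P$ is a $p$-group. By Lemma \ref{le:1}, $R[m]=\prod_j R_j$ and $S[m]=\prod_j S_j$ are products of rings of integers in unramified extensions of $K$ and $L$ obtained by adjoining the prime-to-$p$ root of unity $\zeta_m$.

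The key point is then that adjoining $\zeta_m$ does not change the ramification type: $S_j/R_j$ is totally ramified (resp.\ unramified) precisely when $L/K$ is, since $\mathbb{Q}_p(\zeta_m)/\mathbb{Q}_p$ is unramified. Hence on each factor $SK_1(R_j[H_m])\to SK_1(S_j[H_m])$, with $H_m$ a $p$-group, Theorem \ref{th:15} applies verbatim: in case (ii) each $i_{*}$ is an isomorphism, and in case (iii) each $trf$ is an isomorphism, in particular onto. Reassembling over the $A_m$-invariants, over $m$, and finally over the inducing subgroups $H_i$, yields surjectivity of $trf$ in (iii) and of $i_{*}$ in (ii). For the remaining injectivity of $i_{*}$ in (ii) I would use the detection half of the first paragraph: if $i_{*}(x)=0$ then $\mathrm{Res}^{H_i}_G(x)$ lies in $\ker(i_{*})=0$ on each $\mathbb{Q}$-$p$-elementary piece (where $i_{*}$ is bijective), so $x=0$.

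I expect the main obstacle to lie in the $\mathbb{Q}$-$p$-elementary step, namely in controlling $SK_1$ of the twisted group rings $R[m]\circ P$: one must verify that $r_m$ induces on the $SK_1$-parts an isomorphism onto the $A_m$-invariants (Theorem \ref{th:4} only provides this on $\mathrm{Det}$) and that $i_{*}$ and $trf$ genuinely commute with this identification through the twist. A secondary subtlety is injectivity in (ii) for wildly ramified $L/K$: there the composite $trf\circ i_{*}=\times n$ of Lemma \ref{le:17} is multiplication by a $p$-divisible integer on a finite $p$-group and hence fails to be injective, which is exactly why one must argue through the restriction (detection) side rather than through the composite. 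This same phenomenon explains the asymmetry of the statement, i.e.\ why in the unramified case only surjectivity of $trf$, and not bijectivity, survives for a general finite group $G$.
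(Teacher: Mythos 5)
Your skeleton is in fact the right one: the paper offers no internal argument for Theorem \ref{th:22} at all, but simply cites Theorem 1 of \cite{O. 3}, and Oliver's proof there runs through exactly the reduction you describe (Serre's induction theorem, the Green/Frobenius-module structure of $SK_1(R[-])$ from Chapter 11 of \cite{O.}, and an analysis of group rings of elementary groups). Your first paragraph — generation by induction and detection by restriction from $l\cdot 1_G=\sum_i n_i\,\mathrm{Ind}^G_{H_i}(\theta_i)$, together with the commutation of $i_*$ and $trf$ with $\mathrm{Ind}$ and $\mathrm{Res}$ — is sound, granting that machinery. The genuine gap is the step you yourself flag as the main obstacle and then nevertheless use: that $r_m$ induces an isomorphism $SK_1(R[m]\circ P)\cong SK_1(R[m][H_m])^{A_m}$ compatibly with $i_*$ and $trf$. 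This is not a postponable verification; as you need it, it cannot hold. If it did, then in case (iii) Theorem \ref{th:15}(ii) (unramified coefficients, $p$-group $H_m$) would make $trf$ bijective on every twisted factor, hence on $SK_1(S[H])$ for every $\mathbb{Q}$-$p$-elementary $H$; since $trf$ also commutes with $\mathrm{Res}$, your own detection argument would then force $trf:SK_1(S[G])\to SK_1(R[G])$ to be bijective for \emph{every} finite $G$ in the unramified case, contradicting precisely the asymmetry that Theorem \ref{th:22} records and that Remark \ref{rm:22} makes explicit ($trf$ is in general only an epimorphism). Consequently the proposal as written establishes neither the injectivity in (ii) nor even the surjectivity in (iii), since both rest on the elementary-group step.

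Two concrete things go wrong in that step. First, the injectivity of $r_m$ quoted from (3.8) in \cite{T.} is a statement about $\mathrm{Det}$-groups, i.e.\ about character functions; it has no analogue on $SK_1$, because $H_m$ has $p$-power index in $P$ and restriction to a $p$-power-index subgroup need not be injective on a finite $p$-group such as $SK_1$. Second, your factorwise matching $R[m]=\prod_j R_j$, $S[m]=\prod_j S_j$ collapses exactly in the unramified case: for $K=\mathbb{Q}_p$ and $L$ unramified containing $\mathbb{Q}_p(\zeta_m)$, the ring $S[m]$ splits into $[\mathbb{Q}_p(\zeta_m):\mathbb{Q}_p]$ copies of $S$ while $R[m]=\mathcal{O}_{\mathbb{Q}_p(\zeta_m)}$ is a domain, so there is no bijection of factors to compare along. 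The correct structural statement is the one the paper quotes later as Theorem \ref{th:32}: $SK_1$ of the elementary pieces appears as coinvariants $\mathrm{H}_0\big(N_i/Z_i;\varinjlim_{H\in\mathcal{P}(Z_i)}SK_1(R[\zeta_{n_i}][H])\big)$ under \emph{induction} from $p$-subgroups — a quotient, not a group of invariants under restriction — and its indexing data ($K$-conjugacy classes, the groups $N_i^K$) genuinely changes when $K$ is replaced by an unramified extension $L$; that change is the true source of the one-sidedness of (iii). With this formula your strategy can be salvaged: in case (ii) a totally ramified extension leaves all indexing data unchanged (since $L\cap K(\zeta_{n_i})=K$), so naturality of the decomposition with respect to $i_*$ plus Theorem \ref{th:15}(i) gives the isomorphism, while case (iii) requires comparing the two index sets and the two coinvariant functors, which is precisely the content of Oliver's proof in \cite{O. 3}.
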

\begin{proof}
See proof of Theorem 1 in \cite{O. 3}.
\end{proof}

We need some more notation. For any finite group $G$, and any fixed prime $p$, $G_{r}$ will denote
the set of $p$-regular elements in $G$, i.e., elements of order prime to $p$.
$\mathrm{H}_{n}(G,R(G_{r}))$ denotes the homology group induced by the conjugation action of $G$ on
$R(G_{r})$.

When $R$ is the ring of integers  in a finite unramified extension of $\mathbb{Q}_{p}$, then $\Phi$ denotes the automorphism of $\mathrm{H}_{n}(G,R(G_{r}))$ induced by the map $\Phi(\sum_{i}r_{i}g_{i})=\sum_{i}\varphi(r_{i})g^{p}_{i}$ on coefficients. We set $\mathrm{H}_{n}(G,R(G_{r}))_{\Phi}=\mathrm{H}_{n}(G,R(G_{r}))/(1-\Phi)$. In analogy with the $p$-group case, we define
\begin{equation*}
\mathrm{H}^{ab}_{2}(G,R(G_{r}))_{\Phi}=\mathrm{Im}\big[\sum\left\{\mathrm{H}_{2}(H,R(H_{r})):\;H\subseteq G,\;H\text{ abelian}\right\}\xrightarrow{\sum\mathrm{Ind}}
\end{equation*}
\begin{equation*}
\xrightarrow{\sum\mathrm{Ind}}\mathrm{H}_{2}(G,R(G_{r}))_{\Phi}\big].
\end{equation*}

We use the notation above to formulate the

\begin{Th} \label{th:23}
Let $R$ be the ring of integers  in a finite unramified extension of $\mathbb{Q}_{p}$. Then, for
any finite group $G$, there is an isomorphism
\begin{equation*}
\Theta_{G}:
SK_{1}(R[G])\tilde{\rightarrow}(R/(1-\varphi)R)\otimes_{\mathbb{Z}_{p}}\mathrm{H}_{2}(G,\mathbb{Z}_{p}(G_{r}))_{\Phi}/\mathrm{H}^{ab}_{2}(G,\mathbb{Z}_{p}(G_{r}))_{\Phi}.
\end{equation*}
\end{Th}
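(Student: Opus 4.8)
The plan is to realize $\Theta_G$ through Oliver's integral logarithm, extending to an arbitrary finite group $G$ the mechanism that underlies the $p$-group isomorphism of Theorem \ref{th:15}. Since Theorem 3.14 in \cite{O.} (which, as noted, still holds for general $G$) guarantees that $SK_{1}(R[G])$ is a finite $p$-group, it is enough to produce a natural isomorphism onto the asserted target, and I would do this homologically rather than by direct manipulation of elementary matrices.

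First I would recall the integral logarithm, the homomorphism
\[ \Gamma_G = \log - \tfrac{1}{p}\,\Phi\circ\log : K_{1}(R[G]) \longrightarrow R[G]/[R[G],R[G]] \cong \bigoplus_{c} R\cdot c, \]
where $c$ runs over the conjugacy classes of $G$ and $\Phi(\sum a_{g}g)=\sum\varphi(a_{g})g^{p}$ is the very semilinear operator already used above to define the subscript $\Phi$. The decisive input from Oliver is the identification of $SK_{1}(R[G])$ with the kernel of $\Gamma_G$ modulo the contribution of the trivial units $\pm G$; this converts the computation of $SK_{1}$ into a purely (co)homological problem about $\ker(\Gamma_G)$, exactly as in the $p$-group case behind Theorem \ref{th:15}.

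Next I would pass from $\mathrm{HH}_{0}$ to higher group homology. Using the standard decomposition $\mathrm{HH}_{*}(R[G])\cong\bigoplus_{c}\mathrm{H}_{*}(Z_{G}(g_{c});R)$ over conjugacy class representatives $g_{c}$, together with the way $\Phi$ permutes the summands through $g\mapsto g^{p}$, one checks that in the relevant degree only the $p$-regular classes survive; this is precisely what introduces the coefficient module $\mathbb{Z}_{p}(G_{r})$ with its conjugation action and produces $\mathrm{H}_{2}(G,\mathbb{Z}_{p}(G_{r}))$. The operator $1-\Phi$ then forces the passage to the Frobenius coinvariants $(-)_{\Phi}$, and splitting $\Phi$ as the coefficient Frobenius $\varphi$ composed with the $p$-power map on $G_{r}$ separates off the factor $R/(1-\varphi)R$, using the freeness of $R$ over $\mathbb{Z}_{p}$; this yields the target $(R/(1-\varphi)R)\otimes_{\mathbb{Z}_{p}}\mathrm{H}_{2}(G,\mathbb{Z}_{p}(G_{r}))_{\Phi}$ before the final quotient.

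Finally, as in the $p$-group case, the quotient by $\mathrm{H}^{ab}_{2}(G,\mathbb{Z}_{p}(G_{r}))_{\Phi}$ should come from the fact that classes induced from abelian subgroups lie in the image of $\Gamma_G$, hence contribute trivially to $SK_{1}$ and must be divided out; compatibility of this with $\sum\mathrm{Ind}$ is what makes $\Theta_{G}$ well defined. The main obstacle is precisely the identification of $\ker(\Gamma_G)$ with the stated homological subquotient: this requires the full logarithm calculus of Oliver, careful bookkeeping of the $\Phi$-action on Hochschild homology in order to isolate the $p$-regular part, and control of the edge maps comparing $\mathrm{HH}_{*}$ with $\mathrm{H}_{*}(G,-)$. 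Since all of this is carried out in loc.\ cit., in practice I would reduce the statement to Oliver's computation.
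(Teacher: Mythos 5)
Your overall route coincides with the paper's: both come down to citing Oliver's computation (Theorem 12.10 in \cite{O.}), which yields
\[
SK_{1}(R[G])\;\cong\;\mathrm{H}_{2}(G,R(G_{r}))_{\Phi}/\mathrm{H}^{ab}_{2}(G,R(G_{r}))_{\Phi},
\]
i.e.\ the statement with $R(G_{r})$-coefficients. The paper's proof is literally this citation plus two further isomorphisms,
\[
\mathrm{H}_{2}(G,R(G_{r}))_{\Phi}\cong (R/(1-\varphi)R)\otimes_{\mathbb{Z}_{p}}\mathrm{H}_{2}(G,\mathbb{Z}_{p}(G_{r}))_{\Phi}
\]
and its analogue for $\mathrm{H}^{ab}_{2}$, which it attributes to the announced results of Chinburg--Pappas--Taylor \cite{CPT. 1}.

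The genuine gap in your proposal sits at exactly this conversion step, which is the only part of the theorem not already contained in Oliver. You assert that splitting $\Phi$ into the coefficient Frobenius $\varphi$ and the $p$-power map on $G_{r}$ ``separates off the factor $R/(1-\varphi)R$, using the freeness of $R$ over $\mathbb{Z}_{p}$.'' Freeness (flatness) only gives $\mathrm{H}_{2}(G,R(G_{r}))\cong R\otimes_{\mathbb{Z}_{p}}\mathrm{H}_{2}(G,\mathbb{Z}_{p}(G_{r}))$ \emph{before} passing to coinvariants, with $\Phi$ corresponding to $\varphi\otimes\Phi_{0}$, where $\Phi_{0}$ is induced by $g\mapsto g^{p}$ alone; but taking coinvariants does not commute with this tensor decomposition. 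Indeed, write $N=\mathrm{H}_{2}(G,\mathbb{Z}_{p}(G_{r}))$ and $f=[K:\mathbb{Q}_{p}]$. By the integral normal basis theorem $R\cong\mathbb{Z}_{p}[\langle\varphi\rangle]$ as a module with $\varphi$-action, from which one computes
\[
\bigl(R\otimes_{\mathbb{Z}_{p}}N\bigr)/(1-\varphi\otimes\Phi_{0})\;\cong\;N/(1-\Phi_{0}^{f})N,
\]
whereas, since $R/(1-\varphi)R\cong\mathbb{Z}_{p}$,
\[
(R/(1-\varphi)R)\otimes_{\mathbb{Z}_{p}}N/(1-\Phi_{0})N\;\cong\;N/(1-\Phi_{0})N.
\]
For a general finitely generated $\mathbb{Z}_{p}$-module $N$ with automorphism $\Phi_{0}$ these two quotients are \emph{not} isomorphic: take $N=\mathbb{Z}_{p}$, $\Phi_{0}=-1$, $f$ even and $p$ odd, where the first is $\mathbb{Z}_{p}$ and the second is $0$. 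So the natural surjection $N/(1-\Phi_{0}^{f})\twoheadrightarrow N/(1-\Phi_{0})$ being an isomorphism is a special property of the $\Phi$-action on $\mathrm{H}_{2}(G,\mathbb{Z}_{p}(G_{r}))$ (and on its abelian-induced part), not a formal consequence of freeness; this is precisely the nontrivial input the paper imports from \cite{CPT. 1}. Since your proposal supplies no argument for it, the concluding reduction to ``Oliver's computation'' does not yet prove the stated theorem: Oliver alone gives only the $R(G_{r})$-coefficient form.
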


This new tensor product decomposition of $SK_1(R[G])$ in terms of $R/(1-\varphi)R$  and group
cohomology comes from   the results announced in \cite{CPT. 1}  and we are very grateful to T.
Chinburg, G. Pappas and M. J. Taylor for sharing this insight with us, which not least also
influenced our results below.

\begin{proof}
See proof of Theorem 12.10 in \cite{O.} and use the facts
\begin{equation*}
\mathrm{H}_{2}(G,R(G_{r}))_{\Phi}\cong
(R/(1-\varphi)R)\otimes_{\mathbb{Z}_{p}}\mathrm{H}_{2}(G,\mathbb{Z}_{p}(G_{r}))_{\Phi}
\end{equation*}
and
\begin{equation*}
\mathrm{H}^{ab}_{2}(G,R(G_{r}))_{\Phi}\cong
(R/(1-\varphi)R)\otimes_{\mathbb{Z}_{p}}\mathrm{H}^{ab}_{2}(G,\mathbb{Z}_{p}(G_{r}))_{\Phi}.
\end{equation*}
\end{proof}

Arguing as in the $p$-group case we deduce the following

\begin{Th} \label{th:24}
With the notation as above $i_{*}$ is \newline
(i) an isomorphism, if $L/K$ is totally ramified; \newline
(ii) a monomorphism,  if $L/K$ is unramified and $p\nmid n$.
\end{Th}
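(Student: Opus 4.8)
The plan is to mirror the argument used for $p$-groups in Theorem \ref{th:21}, feeding in the versions of the three key inputs that survive for an arbitrary finite group $G$: the composition formula of Lemma \ref{le:17}, the finiteness of $SK_{1}(R[G])$ (Theorem 3.14 in \cite{O.}, still valid for general $G$ as already noted), and the transfer/inclusion behaviour recorded in Theorem \ref{th:22}. The genuinely new subtlety, and the reason the conclusion in (ii) weakens from ``isomorphism'' to ``monomorphism'', is that for general $G$ the transfer is only known to be surjective rather than bijective.

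For (i), when $L/K$ is totally ramified there is nothing to add: this is exactly statement (ii) of Theorem \ref{th:22}, which already asserts that $i_{*}$ is an isomorphism in the totally ramified case. So the first thing I would do is simply invoke that result.

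For (ii), suppose $L/K$ is unramified with $p\nmid n$, where $n=[L:K]$. By Lemma \ref{le:17} the composite $trf\circ i_{*}$ is multiplication by $n$ on $K_{1}(R[G])$, hence on its subgroup $SK_{1}(R[G])$. Since $SK_{1}(R[G])$ is a finite abelian $p$-group and $p\nmid n$, multiplication by $n$ is an automorphism of this group; being bijective, it is in particular injective, and therefore its left factor $i_{*}$ must be injective as well. This yields that $i_{*}$ is a monomorphism, as claimed.

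The point I would emphasise, which is also the main obstacle to strengthening (ii), is the absence of any surjectivity input. In the $p$-group case of Theorem \ref{th:21} the transfer was itself an isomorphism (Theorem \ref{th:15}(ii)), so one could write $i_{*}=trf^{-1}\circ(\text{mult.\ by }n)$ and conclude that $i_{*}$ corresponds via $\Theta_{RG}$ to multiplication by $n$, hence is an isomorphism precisely when $p\nmid n$. For a general finite group $G$, however, Theorem \ref{th:22}(iii) yields only that $trf$ is onto; from the factorisation $trf\circ i_{*}=(\text{mult.\ by }n)$ of an automorphism one can extract injectivity of $i_{*}$ but not surjectivity, and indeed $|SK_{1}(S[G])|$ may strictly exceed $|SK_{1}(R[G])|$. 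Thus the method delivers exactly a monomorphism in the unramified case, with no evident way to recover surjectivity from the data at hand.
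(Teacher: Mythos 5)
Your proposal is correct and follows essentially the same route as the paper: part (i) is exactly Theorem \ref{th:22}(ii), and part (ii) combines Lemma \ref{le:17} (the composite $trf\circ i_{*}$ is multiplication by $n$) with the finiteness of $SK_{1}(R[G])$ as a $p$-group, so that $p\nmid n$ makes the composite injective and hence $i_{*}$ injective, while the failure of surjectivity traces precisely to $trf$ being only an epimorphism for general $G$ (cf.\ Remark \ref{rm:22}).
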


\begin{Rm} \label{rm:22}
In the case (ii) we cannot say whether $i_{*}$ is surjective or not, since, in general, $trf$ is
only an epimorphism and not an isomorphism.
\end{Rm}

\begin{Cr} \label{cr:3}
The statement \eqref{eq:42} holds in the cases (i)-(ii) of Theorem \ref{th:24}.
\end{Cr}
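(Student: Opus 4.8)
The plan is to reduce the stated descent isomorphism to the two ingredients already available, namely the behaviour of $i_{*}$ recorded in Theorem \ref{th:24} and a composition formula relating $i_{*}$ with the transfer $trf$. Throughout I would use that both $SK_{1}(R[G])$ and $SK_{1}(S[G])$ are finite $p$-groups (Theorem 3.14 in \cite{O.}, which still holds for arbitrary finite $G$), that $\Delta=Gal(L/K)$ acts trivially on $SK_{1}(R[G])$ since it fixes $R$, and that the inclusion $R[G]\hookrightarrow S[G]$ is fixed by $\Delta$. Writing $\sigma_{*}$ for the coefficientwise action of $\sigma\in\Delta$ on $SK_{1}(S[G])$, the last point gives $\sigma_{*}\circ i_{*}=i_{*}$ for every $\sigma$, hence the inclusion
\[
i_{*}\big(SK_{1}(R[G])\big)\subseteq SK_{1}(S[G])^{\Delta}.
\]
So in both cases the content of \eqref{eq:42} is that this inclusion is an equality and that $i_{*}$ is injective.

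First I would dispose of case (i). When $L/K$ is totally ramified, Theorem \ref{th:24}(i) says $i_{*}$ is already an isomorphism onto all of $SK_{1}(S[G])$; since $i_{*}(SK_{1}(R[G]))\subseteq SK_{1}(S[G])^{\Delta}\subseteq SK_{1}(S[G])$, all three groups coincide and \eqref{eq:42} follows at once.

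The real work is case (ii), $L/K$ unramified with $p\nmid n$, where Theorem \ref{th:24}(ii) only supplies injectivity of $i_{*}$ (indeed Remark \ref{rm:22} warns that $i_{*}$ need not be surjective onto the full group). Here the key step is a norm relation for the opposite composite. Exactly as in the proof of Lemma \ref{le:17}, but computed the other way round, $i_{*}\circ trf$ is induced by the $(S[G],S[G])$-bimodule $S[G]\otimes_{R[G]}S[G]\cong (S\otimes_{R}S)[G]$; since $L/K$ is Galois the étale algebra $S\otimes_{R}S$ decomposes as $\prod_{\sigma\in\Delta}S$, so that on $K_{1}$ one obtains
\[
i_{*}\circ trf=\sum_{\sigma\in\Delta}\sigma_{*}.
\]
Restricting to $SK_{1}(S[G])^{\Delta}$, where every $\sigma_{*}$ is the identity, this composite becomes multiplication by $|\Delta|=n$. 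As $SK_{1}(S[G])$ is a finite $p$-group and $p\nmid n$, multiplication by $n$ is invertible; thus for any $y\in SK_{1}(S[G])^{\Delta}$ we have $y=i_{*}\big(n^{-1}\,trf(y)\big)\in i_{*}(SK_{1}(R[G]))$. This yields the reverse inclusion $SK_{1}(S[G])^{\Delta}\subseteq i_{*}(SK_{1}(R[G]))$, and together with injectivity from Theorem \ref{th:24}(ii) gives the isomorphism \eqref{eq:42}.

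I expect the main obstacle to be the careful justification of the norm relation $i_{*}\circ trf=\sum_{\sigma\in\Delta}\sigma_{*}$: one must verify that the bimodule decomposition $S\otimes_{R}S\cong\prod_{\sigma\in\Delta}S$ of the Galois étale algebra induces precisely the sum of the Galois twists at the level of $K_{1}$, matching the coefficientwise $\Delta$-action used elsewhere in the paper. Everything else is formal once one records that the relevant $SK_{1}$-groups are finite $p$-groups, so that multiplication by the prime-to-$p$ integer $n$ is an isomorphism. It is worth emphasising that this argument establishes descent onto the invariants $SK_{1}(S[G])^{\Delta}$ \emph{without} asserting surjectivity of $i_{*}$ onto the whole group, which is exactly what is consistent with Remark \ref{rm:22}.
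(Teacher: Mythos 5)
Your case (i) argument is the same as the paper's (the statement is immediate because $i_{*}$ is a Galois isomorphism), but for case (ii) you take a genuinely different route, and it works. The paper's proof invokes the structure theorem (Theorem \ref{th:23}): since $R/(1-\varphi)R\cong\mathbb{Z}_{p}\cong S/(1-\varphi)S$ in the unramified situation, $SK_{1}(R[G])$ and $SK_{1}(S[G])$ are abstractly isomorphic finite groups, and the Galois monomorphism $i_{*}$ of Theorem \ref{th:24}(ii) is then forced by a cardinality count to be an isomorphism onto $SK_{1}(S[G])^{\Delta}$ (in fact onto all of $SK_{1}(S[G])$, so that the $\Delta$-action is trivial --- strictly more than \eqref{eq:42}, and settling the question left open in Remark \ref{rm:22}). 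You instead prove surjectivity onto the invariants directly from the norm relation $i_{*}\circ trf=\sum_{\sigma\in\Delta}\sigma_{*}$, which on $SK_{1}(S[G])^{\Delta}$ is multiplication by $n$, invertible on a finite $p$-group when $p\nmid n$; this is the ``other composite'' of the paper's Lemma \ref{le:17}, computed by the same bimodule method, and it restricts to $SK_{1}$ since $i_{*}$, $trf$ and $\sigma_{*}$ all preserve $SK_{1}$. The one point you should make explicit is that the decomposition $S\otimes_{R}S\cong\prod_{\sigma\in\Delta}S$ holds because $L/K$ is \emph{unramified} (so $S/R$ is \'etale), not merely Galois --- for ramified Galois extensions the decomposition and hence the relation fail; your wording ``since $L/K$ is Galois'' slightly obscures this, though you do call $S\otimes_{R}S$ the \'etale algebra. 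What your approach buys is self-containedness: it avoids Theorem \ref{th:23}, which rests on Oliver's Theorem 12.10 and the results announced in \cite{CPT. 1}, and needs only finiteness and $p$-primarity of $SK_{1}$ together with the injectivity statement of Theorem \ref{th:24}(ii). What it gives up is the stronger conclusion: you obtain descent onto the invariants but leave open whether $i_{*}$ hits all of $SK_{1}(S[G])$, whereas the paper's counting argument shows that it does.
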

\begin{proof}
The statement is obvious in the case (i), since $i_{*}$ is a Galois isomorphism. For (ii) we use
the isomorphism of Theorem \ref{th:23} for $S$ and $R$   noting  that {
\begin{equation*}
R/(1-\varphi)R\cong \mathbb{Z}_{p}\cong S/(1-\varphi)
\end{equation*}}

in this situattion, hence $SK_{1}(S[G])$ is isomorphic to $SK_{1}(R[G])$ (as an abstract finite
group). Since $i_{*}$ is a Galois monomorphism in the case (ii), we get the statement.
\end{proof}

\begin{Rm} \label{rm:23}
Corollary \ref{cr:3} generalizes immediately to the case of infinite algebraic extensions $L$.
\end{Rm}

To study more general rings (for example completions of infinite extensions of $\mathbb{Q}_{p}$) we
need generalizations of Oliver's results on $SK_1$ to such rings as are announced to appear in
\cite{CPT. 1} in a very general setting. Meanwhile  we outline an ad hoc description sufficient for
our purposes. {  We just note that the same arguments as used below also should work for any ring
$R$ as in the beginning of subsection \ref{axiomatic} and satisfying the surjectivity of $1-F.$}

Let $p$ be an odd prime number. For the rest of this section we assume that $R$ is the ring of Witt vectors
 of a $p$-closed algebraic extension $\kappa$ of $\mathbb{F}_p,$ i.e.\   $\kappa$ does not allow any extension of degree $p.$
 The main example we have in mind being $\widehat{\mathbb{Z}_p^{ur}}=W(\overline{\mathbb{F}_p}).$ We note, that such a ring satisfies Hypothesis in \cite{CPT} and is a discrete valuation ring. We write $\mathfrak{m}$ for its maximal ideal
 $pR$  and  start with a crucial (certainly well-known)
observation:

\begin{Le}\label{sur}
We have an exact sequence \[\xymatrix@C=0.5cm{
  0 \ar[r] & {\mathbb{Z}_p}  \ar[rr]^{ } && R \ar[rr]^{1-\varphi} && R \ar[r] & 0, }\]
where $\varphi$ denotes the Frobenius endomorphism of $R$.
\end{Le}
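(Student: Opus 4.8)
The plan is to treat the three exactness statements separately, the only substantial point being surjectivity of $1-\varphi$. For the kernel, recall that $\varphi$ acts on Witt components by $(a_i)_i\mapsto(a_i^p)_i$, so a vector is fixed precisely when each $a_i$ satisfies $a_i^p=a_i$, i.e.\ lies in $\mathbb{F}_p$. Hence
\[\ker(1-\varphi)=R^{\varphi=1}=W(\mathbb{F}_p)=\mathbb{Z}_p,\]
which at once gives injectivity of the inclusion $\mathbb{Z}_p\hookrightarrow R$ (exactness at $\mathbb{Z}_p$) and exactness at the left copy of $R$. So everything reduces to surjectivity of $1-\varphi$ on $R$.

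For surjectivity I would first reduce modulo $\mathfrak{m}=pR$. On $R/pR=\kappa$ the Frobenius $\varphi$ induces the $p$-power map, so $1-\varphi$ becomes $x\mapsto x-x^p$, that is, the negative of the Artin--Schreier operator $\wp(x)=x^p-x$. By Artin--Schreier theory, for each $a\in\kappa$ the polynomial $X^p-X-a$ either splits over $\kappa$ or is irreducible, in which case it generates a cyclic extension of degree $p$. Since $\kappa$ is algebraic over $\mathbb{F}_p$ it is perfect, so all its extensions are separable, and by hypothesis $\kappa$ is $p$-closed, i.e.\ admits no extension of degree $p$. Therefore $X^p-X-a$ can never be irreducible and must split, so $a\in\wp(\kappa)$; thus $\wp$, and equivalently $1-\varphi$, is surjective on $\kappa$.

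It then remains to lift this surjectivity to $R$ by successive approximation, using that $R$ is $p$-adically complete. Given $r\in R$, set $r^{(0)}=r$; choose $s_n\in R$ with $(1-\varphi)(s_n)\equiv r^{(n)}\pmod{pR}$, and define $r^{(n+1)}$ by $r^{(n)}-(1-\varphi)(s_n)=p\,r^{(n+1)}$. The series $s=\sum_{n\ge 0}p^n s_n$ converges $p$-adically, and since $\varphi$ is $p$-adically continuous one computes
\[(1-\varphi)(s)=\sum_{n\ge 0}p^{n}(1-\varphi)(s_{n})=\sum_{n\ge 0}p^{n}\bigl(r^{(n)}-p\,r^{(n+1)}\bigr)=r^{(0)}=r,\]
the middle sum telescoping. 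This proves surjectivity and completes the exactness.

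The main obstacle is precisely the surjectivity modulo $p$: this is exactly where the $p$-closedness of $\kappa$ enters, through the Artin--Schreier dichotomy. Once that is secured, the $p$-adic lifting and the kernel computation are routine.
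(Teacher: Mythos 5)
Your proof is correct, and it shares the paper's key input---surjectivity of $1-\varphi$ on $\kappa$ via Artin--Schreier theory and the $p$-closedness hypothesis---but both your kernel computation and your lifting step take a genuinely different route from the paper's. The paper proves, by induction on $n$, that the sequence $0 \to \mathbb{Z}_p/p^n\mathbb{Z}_p \to R/p^nR \xrightarrow{1-\varphi} R/p^nR \to 0$ is exact for every $n$, and then obtains surjectivity on $R=\varprojlim_n R/p^nR$ by a compactness argument: the solution sets $S_n=\{s_n\in R/p^nR \mid (1-\varphi)(s_n)=r_n\}$ are non-empty \emph{finite} sets (torsors under the finite kernel $\mathbb{Z}/p^n\mathbb{Z}$) forming an inverse system, so their inverse limit is non-empty. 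You instead lift surjectivity by successive approximation with a telescoping series, which needs only the mod-$p$ statement, $p$-adic completeness, and the (easy) continuity of $\varphi$; this avoids the induction over $n$ entirely. For the kernel, the paper reads it off from the same mod-$p^n$ sequences, whereas you identify $R^{\varphi=1}=W(\mathbb{F}_p)=\mathbb{Z}_p$ directly from the componentwise action of Frobenius on Witt vectors. Your route is more self-contained at the level of $R$ itself; the paper's route yields the finer mod-$p^n$ exactness statements along the way, and its inverse-limit mechanism is the one it reuses later (e.g.\ in the proof of Corollary \ref{cr:31}). One point you should make explicit: the identity $\varphi\bigl((a_i)_i\bigr)=(a_i^p)_i$ for the Witt vector Frobenius holds because $\kappa$, being algebraic over $\mathbb{F}_p$, is \emph{perfect}; for imperfect base rings the Witt Frobenius is not given componentwise, so this step genuinely uses the standing hypotheses on $\kappa$.
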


\begin{proof}
By Artin-Schreier theory and the $p$-closeness of $\kappa$ we have the obvious exact sequence
\[\xymatrix@C=0.5cm{
  0 \ar[r] & {\mathbb{F}_p}  \ar[rr]^{ } && {\kappa}\ar[rr]^{1-\varphi} && {\kappa} \ar[r] & 0, }\]
  because $(1-\varphi)(x)=x-x^p.$ Inductively, one shows that, for all $n\geq1,$ also
\[\xymatrix@C=0.5cm{
  0 \ar[r] & {\mathbb{Z}_p/p^n\mathbb{Z}_p}  \ar[rr]^{ } && R/p^nR \ar[rr]^{1-\varphi} && R/p^nR \ar[r] & 0 }\]
is exact. Thus for any given $r=(r_n)_n\in \projlim_{n} R/p^nR=R$ the   sets $S_n:=\{s_n\in R/p^nR
| (1-\varphi)(s_n)=r_n\}$ are finite and form an inverse system, whence $S:=\projlim_n S_n$ is
non-empty and any $s\in S$ satisfies $(1-\varphi)(s )=r $ by construction.
\end{proof}

Let $G$ be a finite $p$-group. The split exact sequence
\[ \xymatrix@C=0.5cm{
  1 \ar[r] & I(R[G]) \ar[rr]  && R[G] \ar[rr]^{ } && R \ar[r] & 1 }\]
  induces isomorphisms
\[K_1(R[G])\cong K_1(R[G],I(R[G]))\oplus R^\times\] and
\[R[\mathcal{C}_G]\cong \phi(I(R[G]))\oplus R,\]
where $\phi: R[G]\twoheadrightarrow R[\mathcal{C}_G]$ denotes the canonical map, $\mathcal{C}_G$
denoting the conjugacy classes of $G.$ By $\mathrm{Log}(1-x)$ we denote the logarithm series. Then
the map $\frac{1}{p}\mathcal{L}=\phi(\frac{1}{p}(p-\Psi)(\mathrm{Log}(1-x)))$ defined on page 13 of
\cite{CPT} induces by \cite[cor.\ 3.3, thm.\ 3.17]{CPT} and the lemma above a surjective map
\[\Gamma_{I(R[G])}:K_1(R[G],I(R[G]))\twoheadrightarrow \phi(I(R[G])). \]
We use a generalization of Theorem 2.8 in \cite{O.} (for ideals contained in the Jacobson radical) in order to show, that this map is actually a group homomorphism, which together with the surjective homomorphism
\[\Gamma_R:R^\times \twoheadrightarrow R,\]
which sends $x\in 1+\mathfrak{m}$ to $\frac{1}p{}(p-\varphi)\mathrm{Log}(x)$ and
$x\in\kappa^\times$ to zero (note that $\mathrm{Log}(1+pR)=pR$ and that $p-\varphi$ is an
isomorphism of $R$), defines a surjective group homomorphism
\[\Gamma_{R[G]}=\Gamma_{I(R[G])}\oplus \Gamma_R :K_1(R[G])\twoheadrightarrow R[\mathcal{C}_G],\]
which factorizes over
\[\Gamma_{\mathrm{Wh}(R[G])}:\mathrm{Wh}(R[G]):= K_1(R[G])/(G^{ab}\times \mu_R)\twoheadrightarrow R[\mathcal{C}_G].\]
Setting
\[SK_1'(R[G]):=\ker(\Gamma_{\mathrm{Wh}(R[G])})\]
we obtain the following exact sequence
\begin{equation} \label{eq:41}
\xymatrix@C=0.5cm{
  1 \ar[r] & SK_1'(R[G]) \ar[rr]^{ } && {\mathrm{Wh}(R[G])}\ar[rr]^{\Gamma_{\mathrm{Wh}(R[G])}} && R[\mathcal{C}_G] \ar[r] & 1 .}
\end{equation}

  The relation between $SK_1'(R[G])$ and the original $SK_1(R[G])$ will
be cleared below.

Our goal is to prove the following

\begin{Th}
Let $G$ be a $p$-group. Then $SK_1'(R[G])=1.$ In particular, \[\mathrm{Wh}(R[G])\cong
R[\mathcal{C}_G]\] is torsion free and a Hausdorff topological group (the second group being a
pseudocompact $R$-module).
\end{Th}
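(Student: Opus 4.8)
The plan is to reduce everything to the single assertion $SK_1'(R[G])=1$; the remaining claims are then formal. Once $\Gamma_{\mathrm{Wh}(R[G])}$ is injective, the exact sequence \eqref{eq:41} identifies $\mathrm{Wh}(R[G])$ with $R[\mathcal{C}_G]$, which is a free $R$-module, hence torsion free as an abelian group (since $R$ is), and which is pseudocompact, hence Hausdorff. So the ``in particular'' part requires no extra work, and I would spend all the effort on the injectivity of $\Gamma_{\mathrm{Wh}(R[G])}$.

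To control the kernel I would pass to finite levels. Write $\kappa=\bigcup_i\kappa_i$ as the union of its finite subextensions $\kappa_i/\mathbb{F}_p$ and set $R_i=W(\kappa_i)$, so $R=\varinjlim_i R_i$. The logarithm $\Gamma_{R[G]}=\frac{1}{p}(p-\Psi)\circ\mathrm{Log}$ is defined by the very same formula over each $R_i$, and over a finite unramified ring this is exactly Oliver's integral logarithm; by \cite{O.} its induced map on $\mathrm{Wh}(R_i[G])$ has kernel $SK_1(R_i[G])$. All functors involved ($K_1$, $\mathrm{Wh}$, $R[\mathcal{C}_G]$, and the logarithm) commute with the filtered union, so $\Gamma_{\mathrm{Wh}(R[G])}=\varinjlim_i\Gamma_{i,\mathrm{Wh}}$; and since filtered colimits are exact on abelian groups (Remark \ref{rm:17}), taking kernels commutes with the limit, giving
\[
SK_1'(R[G])=\ker\Big(\varinjlim_i\Gamma_{i,\mathrm{Wh}}\Big)=\varinjlim_i\ker\Gamma_{i,\mathrm{Wh}}=\varinjlim_i SK_1(R_i[G]).
\]
Each $\Gamma_{i,\mathrm{Wh}}$ fails to be surjective; only in the limit does surjectivity appear, via Lemma \ref{sur}. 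This is harmless here, because the displayed identity involves only kernels.

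It then remains to see that this colimit vanishes, and here I would use that $\kappa$ being $p$-closed is equivalent to $p^{\infty}\mid[\kappa:\mathbb{F}_p]$ as supernatural numbers. Two interchangeable arguments finish. Either invoke Theorem \ref{th:23}, which gives $SK_1(R_i[G])\cong(R_i/(1-\varphi)R_i)\otimes_{\mathbb{Z}_p}H$ with $H$ a finite $p$-group independent of $i$; since $\varinjlim_i R_i/(1-\varphi)R_i=R/(1-\varphi)R=0$ by Lemma \ref{sur} and tensoring commutes with the colimit, the limit is $0$. Or, in the spirit of Corollary \ref{cr:2}, note each $SK_1(R_i[G])$ is a finite $p$-group and, by Theorem \ref{th:21}(ii), the transition map attached to $R_i\subseteq R_j$ is multiplication by $[\kappa_j:\kappa_i]$; because the $p$-part of $[\kappa:\mathbb{F}_p]$ is $p^{\infty}$, for any given $i$ one may choose $j$ with $[\kappa_j:\kappa_i]$ divisible by the exponent of $SK_1(R_i[G])$, annihilating it, so every element of the colimit is zero. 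Either way $SK_1'(R[G])=1$.

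The step I expect to be the real obstacle is the finite-level input: one must check carefully that the logarithm $\Gamma_{I(R[G])}$ built here from the $\frac{1}{p}(p-\Psi)$-construction of \cite{CPT} agrees, level by level and compatibly with the inclusions $R_i\hookrightarrow R_j$, with Oliver's integral logarithm, so that the kernel computed at level $i$ is genuinely Oliver's $SK_1(R_i[G])$ and so that Theorems \ref{th:23} and \ref{th:21} apply verbatim. Pinning down this compatibility, together with the observation that kernels (unlike cokernels) behave well under the colimit despite the absence of finite-level surjectivity, is where the bookkeeping lies; everything after it is formal.
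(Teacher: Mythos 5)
There is a genuine gap, and it sits at the very first step: the identity $R=\varinjlim_i R_i$ is false. A $p$-closed $\kappa$ is necessarily infinite over $\mathbb{F}_p$ (every finite field has a degree-$p$ extension), and for infinite $\kappa$ the ring $W(\kappa)$ is the $p$-adic \emph{completion} of the union $\bigcup_i W(\kappa_i)$, which is strictly larger: the union is the (non-complete) ring of integers $\mathcal{O}_{L^0}$ of the unramified extension $L^0$ with residue field $\kappa$, whereas $R=W(\kappa)=\mathcal{O}_{\widehat{L^0}}$. For instance, $\sum_n p^n[a_n]$ with $[a_n]$ Teichm\"uller lifts of elements $a_n\in\kappa$ of unbounded degree over $\mathbb{F}_p$ converges in $R$ but lies in no $R_i$. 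Consequently $K_1$, $\mathrm{Wh}$ and the kernel $SK_1'$ of $R[G]$ are \emph{not} filtered colimits of the finite-level objects, and your displayed identity $SK_1'(R[G])=\varinjlim_i SK_1(R_i[G])$ is unjustified; density of $\bigcup_i R_i[G]^\times$ in $R[G]^\times$ does not rescue it, since kernels are not controlled by dense subgroups (and the Hausdorff property of $\mathrm{Wh}(R[G])$, which one might want for such a topological argument, is itself a \emph{consequence} of the theorem). What your colimit argument does prove — and by exactly this mechanism of transition maps being multiplication by field degrees — is Corollary \ref{cr:2}, i.e.\ the vanishing of $SK_1(\mathcal{O}_M[G])$ for $M$ an infinite algebraic unramified extension with $p^\infty\mid[M:\mathbb{Q}_p]$ (the ``type 1'' ring). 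The theorem you were asked to prove concerns the completed ring (``type 2''), where such limit arguments break down; note that even the compatibility $K_1(R[G])\cong\varprojlim_n K_1(R/p^nR[G])$ is deduced in the paper (Corollary \ref{cr:32}) as a consequence of the vanishing of $SK_1$, not available as an input.

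The paper's actual proof is of a different nature: an induction on the order of the $p$-group $G$, following Oliver's proof of \cite[thm.\ 7.1]{O.} but run directly over $R$. One chooses a central element $z\in G$ of order $p$, sets $\bar G=G/\langle z\rangle$, compares the sequences \eqref{eq:41} for $G$ and $\bar G$, and analyses $\ker(\mathrm{Wh}(\alpha))$ through the relative group $K_1(R[G],I_z)$ and its logarithm. The Snake lemma identifies $\ker(SK_1'(\alpha))$ with a quotient $R[\Omega]/\psi^{-1}(\mathrm{log}B)$ (Oliver's $D/C$), and the crucial input — which has no analogue at any finite level and is precisely what replaces your colimit — is Lemma \ref{sur}: $1-\varphi$ is surjective on $R$ because $\kappa$ is $p$-closed. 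This surjectivity forces $\psi^{-1}(\mathrm{log}B)=R[\Omega]$, so the kernel is trivial, and the induction hypothesis applied to $\bar G$ finishes the proof. In short: your reduction is sound for the union ring but not for its completion; for the latter one must redo Oliver's induction over $R$ itself, powered by Lemma \ref{sur}.
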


\begin{proof}
The proof proceeds by induction on the order of $G.$ If $G$ is trivial, it is well-known that the
$SK_1'(R)=1,$ because the kernel of $\Gamma_R$ is just $\mu_R.$ Now assume $G$ to be non-trivial.
Then there exists a central element $z\in G$ of order $p.$ We set $\bar{G}:=G/<z>$ and write
$\alpha: G \twoheadrightarrow \bar{G}$ for the canonical projection. Consider the following
commutative diagram with exact rows
\[\xymatrix{
   &     & 0\ar[d]_{ } & 0\ar[d]_{ }   &   \\
   &   & {\ker(\mathrm{Wh}(\alpha))} \ar[d]_{ } \ar[r]^{ } & (1-z)R[\mathcal{C}_G] \ar[d]_{ } \ar[r]^{ } &  0  \\
  0   \ar[r]^{ } & SK_1'(R[G]) \ar[d]_{SK_1'(\alpha)} \ar[r]^{ } & {\mathrm{Wh}(R[G])} \ar[d]_{\mathrm{Wh}(\alpha)} \ar[r]^{\Gamma_\mathrm{Wh}} & R[\mathcal{C}_{ {G}}] \ar[d]_{H_0(\alpha)} \ar[r]^{ } & 0  \\
  0 \ar[r]^{ } & SK_1'(R[\bar{G}]) \ar[r]^{ } & {\mathrm{Wh}(R[\bar{G}])} \ar[r]^{\Gamma_\mathrm{Wh}} & R[\mathcal{C}_{\bar{G}}] \ar[r]^{ } & 0,}\]
in which also the right column is exact by \cite[lem.\ 3.9]{CPT}. Here $I_z$ denotes the ideal
$(1-z)R[G].$

An immediate generalization of \cite[prop.\ 6.4]{O.} to our setting tells us that the logarithm
induces an exact sequence
\[\xymatrix@C=0.5cm{
  1 \ar[r] & <z> \ar[rr]^{ } && K_1(R[G],I_z) \ar[rr]^{\mathrm{log}} && {\mathrm{H}_0(G,I_z)} \ar[r] & 0 }\]
(note that $\tau$ in (loc.\ cit.) has to be replaced by the trivial map, because $1-\varphi$ in
Lem.\ 6.3 is surjective on $\kappa;$ also \cite[thm.\ 2.8]{O.} (for ideals contained in the Jacobson radical)
 needed for the proof generalizes immediately to our setting, because by \cite[(20.4)]{L.} $S:=M_n(R[G])$ is also
semilocal and satisfies $J(S)^N\subseteq pS$ for $N$ sufficiently big).

Thus we obtain a commutative diagram with exact rows
\[\xymatrix{
  0   \ar[r]^{ } & B\ar[d]_{ } \ar[r]^{ } & K_1(R[G],I_z)/<z> \ar[d]_{\mathrm{log}}^{\cong} \ar[r]^{} & {\ker(\mathrm{Wh}(\alpha))} \ar[d]_{\Gamma_{R[G]}} \ar[r]^{ } & 0   \\
  0 \ar[r]^{ } & {\mathrm{H}_0(G,(1-z)R[\Omega])} \ar[r]^{ } & {\mathrm{H}_0(G,(1-z)R[G])} \ar[r]^{ } & (1-z)R[\mathcal{C}_{G}] \ar[r]^{ } & 0.   }\]

By the Snake-lemma we see that
\begin{eqnarray*}
\ker(SK_1'(\alpha))&=& \ker(\mathrm{Wh}(\alpha))\cap SK_1'(R[G])\\
&=&{\mathrm{H}_0(G,(1-z)R[\Omega])}/\mathrm{log}B\\
&=& R[\Omega]/\psi^{-1}(\mathrm{log}B),
\end{eqnarray*}
where $\psi:R[\Omega]\twoheadrightarrow {\mathrm{H}_0(G,(1-z)R[\Omega])}$ is induced by
multiplication with $(1-z).$  Here, following \cite{O.}  we write
\[\Omega =\{g\in G|  g \mbox{ is conjugate to } zg\}\] and note that our last term in the above equation
corresponds to $D/C$ in the proof of \cite[thm.\ 7.1]{O.}. Hence, copying literally the same
arguments and noting again that $1-\varphi$ is surjective on $R,$ we see from (c) on p. 176 in
(loc.\ cit.) that $C=\psi^{-1}(\mathrm{log}B)=R[\Omega].$ In other words, $\ker(SK_1'(\alpha))$ is
trivial. Since, by our induction hypothesis also $SK_1'(R[\bar{G}])$ vanishes, the theorem is
proved.
\end{proof}

Let $L^0$ be the unique unramified algebraic extension of $\mathbb{Q}_p$ with residue field
$\kappa.$

\begin{Cr} \label{cr:31}
For any open subgroup $\Delta\subseteq Gal(L^0/\mathbb{Q}_p)$, there is an exact sequence
\[\xymatrix@C=0.5cm{
  1 \ar[r] & SK_1(R^\Delta[G]) \ar[rr]^{ } && K_1(R^\Delta [G])\ar[rr]^{ } && K_1(R[G])^\Delta \ar[r] & 1 ,}\]
and   isomorphisms
\[  {\mathrm{H}^1(\Delta, \mu_R)} \cong {\mathrm{H}^1(\Delta, K_1(R  [G]))}  \]
of continuous cochain cohomology groups and {  coinvariants \[(\mu_R)_\Delta\cong
K_1(R[G])_\Delta,\] respectively.}
\end{Cr}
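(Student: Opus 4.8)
The plan is to split the statement into the short exact sequence and the two cohomological isomorphisms, and to reduce everything to the integral logarithm sequence \eqref{eq:41} together with the descent already proved for the $\mathrm{Det}$-part. Throughout I fix notation: since $\Delta$ is open in the procyclic group $Gal(L^0/\mathbb{Q}_p)\cong Gal(\kappa/\mathbb{F}_p)$ it is itself procyclic, topologically generated by $\sigma=\varphi^{d}$, and $R^\Delta=W(\mathbb{F}_{p^{d}})$ is the ring of integers of the unramified extension $K/\mathbb{Q}_p$ of degree $d=[R^\Delta:\mathbb{Z}_p]$.

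For the exact sequence I would first note that $SK_1(R[G])=1$: the $p$-closedness of $\kappa$ forces the pro-$p$ part of $Gal(\overline{\mathbb{F}_p}/\kappa)$ to vanish, so $p^\infty$ divides $[\kappa:\mathbb{F}_p]=[L^0:\mathbb{Q}_p]$ as supernatural numbers, and Corollary \ref{cr:2} applies. Since $L^0$ is unramified the inertia condition is vacuous, so Theorem \ref{th:34} gives $\mathrm{Det}(R^\Delta[G]^\times)\cong\mathrm{Det}(R[G]^\times)^\Delta$. Because $SK_1(R[G])=1$, the sequence \eqref{eq:2} over $R$ together with Proposition \ref{th:1} collapses to a $\Delta$-equivariant isomorphism $K_1(R[G])\cong\mathrm{Det}(R[G]^\times)$; taking $\Delta$-invariants and composing with Theorem \ref{th:34} identifies $K_1(R[G])^\Delta$ with $\mathrm{Det}(R^\Delta[G]^\times)=K_1(R^\Delta[G])/SK_1(R^\Delta[G])$. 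Unwinding the maps shows this identification is induced by $i_*$; its kernel is $SK_1(R^\Delta[G])$, which it contains (as $SK_1(R[G])=1$) and in which it is contained (the $\mathrm{Det}$-part injects by Proposition \ref{prop:3}). This is the asserted sequence.

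For the cohomological statements I would work with the logarithm sequence. The preceding theorem gives $SK_1'(R[G])=1$, so \eqref{eq:41} becomes a $\Delta$-equivariant short exact sequence
\[1\to \mu_R\times G^{ab}\to K_1(R[G])\xrightarrow{\Gamma_{R[G]}} R[\mathcal{C}_G]\to 1,\]
equivariant because $\Delta$ commutes with the Frobenius $\varphi$ defining $\Gamma_{R[G]}$ and $G^{ab}$ carries the trivial action. The key input is a relative version of Lemma \ref{sur}: the operator $1-\sigma=1-\varphi^{d}$ is still surjective on $R$ with kernel $R^\Delta$, since $X^{p^{d}}-X-a$ splits over $\kappa$ (its splitting field is a $p$-power extension, hence trivial by $p$-closedness) and one lifts along $R=\varprojlim_n R/p^nR$ exactly as in the lemma. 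Hence $R[\mathcal{C}_G]$ is $\Delta$-acyclic in positive degrees with $H^0=R^\Delta[\mathcal{C}_G]$. As $\Delta$ is procyclic, its continuous cohomology is computed by the kernel and cokernel of $1-\sigma$, so the snake lemma applied to $1-\sigma$ on the logarithm sequence kills the contribution of $R[\mathcal{C}_G]$ and reduces both assertions to the connecting map $\partial\colon R^\Delta[\mathcal{C}_G]\to(\mu_R\times G^{ab})_\Delta=(\mu_R)_\Delta\oplus G^{ab}$.

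The real work lies in evaluating $\partial$. Using that the logarithms over $R^\Delta$ and over $R$ are compatible (the Witt--Frobenius is the same) together with Part~1, the image of the edge map $K_1(R[G])^\Delta\to R^\Delta[\mathcal{C}_G]$ equals $\mathrm{im}\,\Gamma_{R^\Delta[G]}$, whence $\ker\partial=\mathrm{im}\,\Gamma_{R^\Delta[G]}$ and $\mathrm{im}\,\partial\cong\mathrm{coker}\,\Gamma_{R^\Delta[G]}$. By Oliver's integral logarithm over the unramified $R^\Delta$ (see \cite{O.}) this cokernel is $G^{ab}$, realised by his map $\omega$. The point I must then prove is that $\partial$ hits precisely the $G^{ab}$-summand, i.e.\ that its $(\mu_R)_\Delta$-component vanishes, equivalently that $\big(K_1(R[G])/G^{ab}\big)^\Delta$ already surjects onto $R^\Delta[\mathcal{C}_G]$. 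Granting this, $K_1(R[G])_\Delta=\mathrm{coker}\,\partial=(\mu_R)_\Delta$, and the same computation in cohomology yields $H^1(\Delta,K_1(R[G]))\cong H^1(\Delta,\mu_R)$; for procyclic $\Delta$ these are merely the coinvariant and $H^1$ packagings of one statement. I expect the vanishing of this $\mu_R$-valued connecting map — identifying the Frobenius-descent obstruction with Oliver's $\omega$-map over $R^\Delta$ — to be the main obstacle, the only other delicate point being the justification of the $\ker/\mathrm{coker}$ description of continuous $\widehat{\mathbb{Z}}$-cohomology for the pseudocompact (non-compact) $R$-modules in play.
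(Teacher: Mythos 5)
Your proposal has two genuine gaps, one in each half. In Part~1 the opening step fails: Corollary \ref{cr:2} cannot be invoked for $R=W(\kappa)$. That corollary concerns $\mathcal{O}_M$ for an infinite \emph{algebraic} extension $M/\mathbb{Q}_p$, i.e.\ the union $\bigcup_i W(\kappa_i)$ over the finite subextensions, and its proof is a direct-limit argument (Remark \ref{rm:17} plus Theorem \ref{th:21}). Since $\kappa$ is $p$-closed it is infinite, so $R=W(\kappa)$ is the $p$-adic \emph{completion} of $\mathcal{O}_{L^0}$ and strictly contains it; $K_1$ and $SK_1$ commute with direct limits but not (for all one knows at this stage) with this completion --- indeed the continuity statement $K_1(R[G])\cong\varprojlim_n K_1(R/p^nR[G])$ is itself obtained in the paper only as a consequence of $SK_1(R[G])=1$. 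The whole point of Lemma \ref{sur}, of $\Gamma_{R[G]}$ and of $SK_1'$ is that Oliver's theory does not reach $W(\kappa)$ by limit arguments. In the paper, $SK_1(R[G])=1$ is Corollary \ref{cr:32}, proved \emph{after} the present corollary via the diagram \eqref{SK_1} (comparing $\Gamma_{R[G]}$ with $\Gamma_{\mathrm{Hom}}$ through $\mathrm{Det}$ and $\mathrm{Tr}$); that proof is independent of the present corollary, so you could import it, and your Part~1 --- Det-descent (Theorem \ref{th:34}, Remark \ref{rm:2}) combined with $SK_1(R[G])=1$ --- would then become a correct and rather clean alternative to the paper's diagram chase. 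As written, however, it rests on a misapplied citation.

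In Part~2 (whose snake-lemma framework with $1-\sigma$ is exactly the alternative the paper itself mentions at the end of its proof, and whose relative version of Lemma \ref{sur} is fine) you reduce everything to showing that $\partial\colon R^\Delta[\mathcal{C}_G]\to(\mu_R)_\Delta\times G^{ab}$ has trivial $(\mu_R)_\Delta$-component, and then you stop, calling this ``the main obstacle''. That is precisely the content of the corollary, so the proof is incomplete; moreover your computation of $\ker\partial$ uses Part~1, which is itself broken. The paper settles the point by the commutativity of the right-hand square of its diagram: with the arithmetic Frobenius in the definition of $\Gamma_{R[G]}$, Oliver's sequence over $R^\Delta$ maps to the $\Delta$-invariant sequence over $R$, identifying $\partial$ with $\omega$ followed by the inclusion $G^{ab}\hookrightarrow\mathrm{H}^1(\Delta,\mu_R)\times G^{ab}$. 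Alternatively, your own reduction already yields the result by a torsion argument you did not notice: $\mathrm{im}\,\partial\cong\mathrm{coker}\,\Gamma_{R^\Delta[G]}\cong G^{ab}$ is a finite $p$-group, whereas for odd $p$ the absolutely unramified ring $R$ has $\mu_R\cong\kappa^\times$, a torsion group without $p$-torsion, so $(\mu_R)_\Delta$ has no $p$-torsion and the projection of $\mathrm{im}\,\partial$ to it must vanish; hence $\mathrm{im}\,\partial=1\times G^{ab}$ by counting, giving $\mathrm{coker}\,\partial\cong(\mu_R)_\Delta$ and likewise the $\mathrm{H}^1$-statement. Some such argument, together with a repaired Part~1, must be supplied before the proposal proves the statement.
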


\begin{proof}
Taking $\Delta$-invariants of the exact sequence (of topological Hausdorff modules, cp.\ Corollary
\ref{cr:32} for $K_1$)
\[\xymatrix@C=0.5cm{
  1 \ar[r] & G^{ab}\times \mu_R \ar[rr]^{ } && K_1(R[G]) \ar[rr]^{\Gamma_{R[G]}} && R[\mathcal{C}_G] \ar[r] & 0 }\]

and noting the Galois invariance of $\Gamma_{R[G]}$ (if we choose the arithmetic Frobenius in its
definition) we obtain the following commutative diagram with exact rows (the first of which is the
standard exact sequence as proved in \cite{O.})

\begin{tiny}
\[\xymatrix{
  1  \ar[r]^{ } & G^{ab}\times \mu_{R^\Delta}\times SK_1(R^\Delta[G]) \ar[d]_{ } \ar[r]^{ } & K_1(R^\Delta [G]) \ar[d]_{ } \ar[r]^{ } & R^\Delta[\mathcal{C}_G] \ar[d]^{\cong } \ar[r]^{ } & G^{ab} \ar@^{^(->}[d]_{ } \ar[r]^{ }& 1\\
  1 \ar[r]^{ } & G^{ab}\times {\mu_R}^\Delta \ar[r]^{ } & K_1(R[G])^\Delta \ar[r]^{} & R[\mathcal{C}_G]^\Delta \ar[r]^{ } & {\mathrm{H}^1(\Delta, \mu_R) \times G^{ab}} \ar[r]^{ }& \ldots  }\]
\end{tiny}
from which the claim follows using \cite[prop.\ 1.6.13]{NSW}. Note that
\begin{eqnarray*}
{\mathrm{H}^1(\Delta, R [\mathcal{C}_G])} &\cong & \projlim_{n} \mathrm{H}^1(\Delta,
R/p^nR)^{\#\mathcal{C}_G} \\
&\cong & \projlim_{n}  ( R/p^nR)_\Delta^{\#\mathcal{C}_G}=0
\end{eqnarray*}
by  the straight forward generalization of \cite[thm.\ 2.3.4]{NSW} to pseudocompact modules, again
\cite[prop.\ 1.6.13]{NSW} and  Lemma \ref{sur}. { Alternatively, we may replace the long exact
cohomology sequence above by the kernel/cokernel exact sequence arising from the Snake-lemma
associated to multiplication by $1-\tau$ for any topological generator $\tau$ of $\Delta.$}
\end{proof}

\begin{Cr} \label{cr:32}
$SK_1(R[G])=1,$ i.e., $K_1(R[G])\cong \mathrm{Det}(R[G]^\times).$ In particular, \[K_1(R[G])\cong
\projlim_{n} K_1(R/p^nR[G]).\]
\end{Cr}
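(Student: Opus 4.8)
The statement has two parts: the vanishing $SK_1(R[G])=\ker(\mathrm{Det})=1$ for our finite $p$-group $G$, and the inverse-limit description. The essential point is to compare the determinantal $SK_1(R[G])$ with the logarithmic $SK_1'(R[G])=\ker(\Gamma_{\mathrm{Wh}(R[G])})$, whose vanishing was just established. By that theorem the sequence \eqref{eq:41} shows that $\Gamma_{\mathrm{Wh}(R[G])}$ is an isomorphism; lifting it to $K_1(R[G])$ and using $\mathrm{Wh}(R[G])=K_1(R[G])/(G^{ab}\times\mu_R)$ yields the exact sequence
\[1\rightarrow G^{ab}\times\mu_R\rightarrow K_1(R[G])\xrightarrow{\Gamma_{R[G]}}R[\mathcal{C}_G]\rightarrow 0,\]
so that $\ker(\Gamma_{R[G]})=G^{ab}\times\mu_R$, the subgroup of trivial units.

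The first thing I would check is that $\Gamma_{R[G]}$ factors through $\mathrm{Det}$. Indeed, on the complementary factor $R^\times=K_1(R)$ the determinant is injective, so that for $x\in K_1(R[G])$ with $\mathrm{Det}(x)=1$ the $R^\times$-component of $x$ (read off by evaluating at the trivial character, i.e.\ the augmentation) is already trivial; the remaining $I(R[G])$-part then also has trivial determinant, and its image under $\Gamma_{I(R[G])}$ vanishes because $\Gamma_{I(R[G])}$ is induced by $\tfrac1p\mathcal{L}$ with $\mathcal{L}=\nu\circ\mathrm{Det}$ (axiom (1), see \cite{CPT}). Hence
\[SK_1(R[G])=\ker(\mathrm{Det})\subseteq\ker(\Gamma_{R[G]})=G^{ab}\times\mu_R.\]

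It then remains to see that $\mathrm{Det}$ is injective on the trivial units. For $\zeta\in\mu_R$ and $g\in G$, evaluating $\mathrm{Det}(\zeta g)$ at the trivial character gives $\zeta$, so $\mathrm{Det}(\zeta g)=1$ already forces $\zeta=1$; evaluating the remaining $\mathrm{Det}(g)$ at the one-dimensional characters $\chi$ of $G^{ab}$ then gives the values $\chi(g)$, and as these characters separate the points of $G^{ab}$ we get $g\in\bigcap_\chi\ker\chi=1$. Therefore $SK_1(R[G])=\ker(\mathrm{Det})\cap(G^{ab}\times\mu_R)=1$. The reformulation $K_1(R[G])\cong\mathrm{Det}(R[G]^\times)$ is then immediate, since by Remark \ref{rm:1} and Proposition \ref{th:1} one has $\mathrm{Det}(K_1(R[G]))=\mathrm{Det}(R[G]^\times)$, and the now-injective map $K_1(R[G])\rightarrow\mathrm{Det}(K_1(R[G]))$ is an isomorphism onto this group.

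For the concluding inverse-limit assertion I would exploit the $p$-adic completeness $R[G]=\varprojlim_n R/p^nR[G]$ together with the identification $\mathrm{Wh}(R[G])\cong R[\mathcal{C}_G]=\varprojlim_n R/p^nR[\mathcal{C}_G]$, comparing the exact sequence above with its finite-level analogues for $R/p^nR[G]$ and passing to the inverse limit; as all the groups in sight are compact, the relevant $\varprojlim^1$-terms vanish and the isomorphism $K_1(R[G])\cong\varprojlim_n K_1(R/p^nR[G])$ drops out. The main obstacle is the comparison carried out in the first two paragraphs, namely establishing both $\ker(\mathrm{Det})\subseteq\ker(\Gamma_{R[G]})$ and the separation of trivial units by $\mathrm{Det}$; once these are in place the vanishing is forced, and the remaining inverse-limit bookkeeping is routine modulo verifying the finite-level statements.
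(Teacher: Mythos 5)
Your proof of the main claim $SK_1(R[G])=1$ is correct and has the same skeleton as the paper's: both start from the exact sequence $1\to G^{ab}\times\mu_R\to K_1(R[G])\xrightarrow{\Gamma_{R[G]}}R[\mathcal{C}_G]\to 0$ supplied by the theorem $SK_1'(R[G])=1$ and \eqref{eq:41}, establish $\ker(\mathrm{Det})\subseteq\ker(\Gamma_{R[G]})=G^{ab}\times\mu_R$, and finish with the injectivity of $\mathrm{Det}$ on the trivial units. The difference lies in how that middle inclusion is obtained. The paper introduces the additive Hom-side data $\mathrm{Tr}$ and $\Gamma_{\mathrm{Hom}}$, proves that diagram \eqref{SK_1} commutes by an explicit $\log$--$\det$--$\mathrm{tr}$ computation in the style of \cite[prop.\ 4.3.25]{Sn}, and then uses the injectivity of $\mathrm{Tr}$; you instead kill the $R^\times$-component by evaluating $\mathrm{Det}$ at the trivial character and invoke on the $I(R[G])$-summand the factorization $\mathcal{L}=\nu\circ\mathrm{Det}$, i.e.\ axiom (1) of the reduction step. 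Since $R=W(\kappa)$ is exactly the unramified situation in which the paper asserts those axioms via \cite{CPT}, your shortcut is legitimate; it buys brevity, at the cost of outsourcing to \cite{CPT} precisely what the paper's lemma verifies by hand (the commutativity of \eqref{SK_1} is in effect a self-contained proof of the factorization property you quote).

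The concluding assertion $K_1(R[G])\cong\varprojlim_n K_1(R/p^nR[G])$, however, is not proved by your last paragraph, and the gap is genuine. First, there are no ``finite-level analogues'' of the sequence $1\to G^{ab}\times\mu_R\to K_1(R[G])\to R[\mathcal{C}_G]\to 0$: the map $\Gamma_{R[G]}$ is built from $\frac{1}{p}\mathrm{Log}$ and a lift of Frobenius and requires torsion-free, characteristic-zero coefficients, so nothing of this shape exists over the $p$-torsion rings $R/p^nR[G]$. Second, your compactness claim fails in the case of main interest: for infinite $\kappa$ (e.g.\ $\kappa=\overline{\mathbb{F}_p}$, $R=\widehat{\mathbb{Z}_p^{ur}}$) the quotients $R/p^nR$ are infinite and discrete, so $R$, $R[\mathcal{C}_G]$ and $K_1(R[G])$ are complete, indeed pseudocompact, but not compact, and any $\varprojlim^1$-vanishing must come from Mittag--Leffler-type arguments rather than compactness. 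The paper settles this part in one line: once $SK_1(R[G])=1$ is known, the limit description follows from \cite[prop.\ 1.3]{CPT}. Alternatively one can argue directly, using $R[G]^\times\cong\varprojlim_n (R/p^nR[G])^\times$, the surjectivity of $(R/p^nR[G])^\times\to K_1(R/p^nR[G])$ for these semilocal rings, and the continuity together with the now-established injectivity of $\mathrm{Det}$ (so that a unit whose class dies modulo every $p^n$ has $\mathrm{Det}$-values in $\bigcap_n(1+p^n\mathcal{O}_{\mathbb{C}_p})=\{1\}$); either way, your sketch needs to be replaced by such an argument.
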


The last claim follows from the first one using  \cite[prop.\ 1.3]{CPT}. For the proof of the first
claim of the corollary consider the following diagram

\begin{equation}
\label{SK_1} \xymatrix{
    &  & 1 \ar[d]_{ }  &  &   \\
  &  & SK_1(R[G]) \ar[d]_{ }  &  &   \\
 1   \ar[r]^{ } & G^{ab}\times \mu_R   \ar[r]^{ } & K_1(R[G]) \ar[d]_{\mathrm{Det}} \ar[r]^{\Gamma_{R[G]}} & R[\mathcal{C}_G] \ar@^{^(->}[d]_{\mathrm{Tr}}
  \ar[r]^{ } & 1 ,  \\
   &   & {\mathrm{Hom}(R_G,\mathbb{C}_p^\times)} \ar[r]^{\Gamma_\mathrm{Hom}} & {\mathrm{Hom}(R_G,\mathbb{C}_p )}   &     }
\end{equation}

where $\Gamma_\mathrm{Hom}$ is defined as follows: Choose any continuous lift $F:\mathbb{C}_p \to
\mathbb{C}_p$ of the absolute Frobenius automorphism and denote by $\log:\mathbb{C}^\times \to
\mathbb{C}_p$ the usual $p$-adic logarithm. We write $\psi^p$ and $\psi_p$ for the $p$th Adams
operator, which is characterized by \[tr(g,\psi^p\rho)=tr(g^p,\rho) \mbox{ for all } g\in G,\] and
its adjoint, respectively. Also the rule $f(\rho) \mapsto F(f(\rho^{F^{-1}}))$ induces an operator
$\tilde{F}$ on $\mathrm{Hom}(R_G,\mathbb{C}_p^\times ) $ and $\mathrm{Hom}(R_G,\mathbb{C}_p ),$
which commutes obviously with $\psi_p.$ Now we set
 \[\Gamma_\mathrm{Hom}(f) =\frac{1}{p}(p-\tilde{F}\psi_p)(\log\circ f ).\]
 Finally, $\mathrm{Tr},$   the additive analog of $\mathrm{Det},$  is induced by \[\mathrm{Tr}(\lambda)(\rho)= \mathrm{tr} (\rho(\lambda)),
 \] where $\rho:R[G]\to M_n(\mathbb{C}_p)$ is the linear extension of $\rho:G\to
  GL_n(\mathbb{C}_p)$ keeping the same notation.  One easily checks that

 \begin{equation}\label{DET}
 \mathrm{Det}( F(\lambda))(\rho)=\tilde{F}(\mathrm{Det}(\lambda))(\rho)=F
 \mathrm{Det}(\lambda)(\rho^{F^{-1}})
 \end{equation}
and
\begin{equation}\label{TR}
 \mathrm{Tr}( \Psi(\lambda))(\rho)=\tilde{F}(\mathrm{Tr}(\psi_p\lambda))(\rho)=F
 \mathrm{Tr}(\lambda)(\psi_p\rho^{F^{-1}})
 \end{equation}

The Corollary will follow immediately from the following

\begin{Le}
The diagram \eqref{SK_1} commutes and $\mathrm{Det}$ restricted to $G^{ab}\times \mu_R$ is
injective.
\end{Le}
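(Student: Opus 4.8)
The plan is to establish the two assertions separately: the commutativity of the square in \eqref{SK_1}, and the injectivity of $\mathrm{Det}$ on $G^{ab}\times\mu_R$. For the commutativity I first note that both composites $\Gamma_{\mathrm{Hom}}\circ\mathrm{Det}$ and $\mathrm{Tr}\circ\Gamma_{R[G]}$ from $K_1(R[G])$ to $\mathrm{Hom}(R_G,\mathbb{C}_p)$ are group homomorphisms: $\mathrm{Det}$ is multiplicative, the $p$-adic $\log$ converts the multiplicative structure on $\mathrm{Hom}(R_G,\mathbb{C}_p^\times)$ into the additive one on $\mathrm{Hom}(R_G,\mathbb{C}_p)$, and $\Gamma_{R[G]}$, $\mathrm{Tr}$ and the operator $\frac{1}{p}(p-\tilde{F}\psi_p)$ are all additive. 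Using the decomposition $K_1(R[G])\cong K_1(R[G],I(R[G]))\oplus R^\times$ it therefore suffices to test the identity $\Gamma_{\mathrm{Hom}}(\mathrm{Det}(u))=\mathrm{Tr}(\Gamma_{R[G]}(u))$ on elements $u\in 1+I(R[G])$ and on the scalar factor $R^\times$, where the series $\mathrm{Log}$ converges and the integrality of $\frac{1}{p}(p-\Psi)\mathrm{Log}$ is imported from the framework recalled from \cite{CPT}.

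The heart of the matter is the classical logarithmic compatibility. For $u\in 1+I(R[G])$ and any representation $\rho\colon R[G]\to M_n(\mathbb{C}_p)$ one has $\log\det\rho(u)=\mathrm{tr}\log\rho(u)=\mathrm{tr}\big(\rho(\mathrm{Log}(u))\big)$, since $\rho$ is a ring homomorphism and hence commutes with the logarithm series; in the notation of the diagram this reads $\log\circ\mathrm{Det}(u)=\mathrm{Tr}(\mathrm{Log}(u))$, where $\mathrm{Tr}$ is viewed as factoring through $\phi$. Substituting into the definition of $\Gamma_{\mathrm{Hom}}$ gives
\[\Gamma_{\mathrm{Hom}}(\mathrm{Det}(u))=\frac{1}{p}(p-\tilde{F}\psi_p)\big(\mathrm{Tr}(\mathrm{Log}(u))\big),\]
while the definition of $\Gamma_{R[G]}$ together with $\mathrm{Tr}\circ\phi=\mathrm{Tr}$ yields
\[\mathrm{Tr}(\Gamma_{R[G]}(u))=\frac{1}{p}\big(p\,\mathrm{Tr}(\mathrm{Log}(u))-\mathrm{Tr}(\Psi\,\mathrm{Log}(u))\big).\]
These two expressions coincide precisely by the identity \eqref{TR}, applied to $\lambda=\mathrm{Log}(u)$, which states $\mathrm{Tr}(\Psi\lambda)=\tilde{F}\psi_p\big(\mathrm{Tr}(\lambda)\big)$. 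On the scalar factor the same computation applies with $\Psi$ replaced by $\varphi$ in $\Gamma_R$: for $u\in 1+\mathfrak{m}$ the identity element is fixed by $g\mapsto g^p$, so on $\mathrm{Log}(u)\in\mathfrak{m}\cdot 1$ the operator $\Psi$ reduces to $\varphi$ and $\psi_p$ acts trivially, matching $p-\varphi$ and the fact that $F$ restricts to $\varphi$ on $R$; and for $u\in\mu_R$ both sides vanish, since $\mathrm{Det}(u)(\rho)=u^{\dim\rho}$ is a root of unity annihilated by $\log$, while $\Gamma_R(u)=0$ by definition.

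For the injectivity of $\mathrm{Det}$ on $G^{ab}\times\mu_R$, I evaluate determinants on the one-dimensional representations of $G$, which factor through $G^{ab}$ and lie in $R_G$. For a linear character $\chi$ and $(g,\zeta)\in G^{ab}\times\mu_R$ one has $\mathrm{Det}(g\zeta)(\chi)=\zeta\,\chi(g)$. Suppose $\mathrm{Det}(g\zeta)=1$. Testing against the trivial character forces $\zeta=1$; then $\chi(g)=1$ for every linear character $\chi$, and since $G^{ab}$ is a finite abelian $p$-group and $\mathbb{C}_p$ contains all $p$-power roots of unity, Pontryagin duality gives $g=1$. Hence the restriction is injective. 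Combined with the commutative square and the injectivity of $\mathrm{Tr}$ (orthogonality of $\mathbb{C}_p$-valued characters), this is exactly what drives Corollary \ref{cr:32}: if $u\in SK_1(R[G])=\ker\mathrm{Det}$ then $\mathrm{Tr}(\Gamma_{R[G]}(u))=\Gamma_{\mathrm{Hom}}(1)=0$, so $u\in\ker\Gamma_{R[G]}=G^{ab}\times\mu_R$, whence $u=1$.

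The main obstacle is the bookkeeping in the commutativity argument rather than any single deep step: one must track the Frobenius-type operators $\Psi$, $\varphi$, $\tilde{F}$ and the Adams operator $\psi_p$ correctly through the trace, which is exactly the content encoded in \eqref{DET} and \eqref{TR}, and one must ensure that the convergence of $\mathrm{Log}$ and the integrality of $\frac{1}{p}(p-\Psi)\mathrm{Log}$ are legitimately imported so that the reduction to $1+I(R[G])$ and to scalars is valid. The most delicate point is the scalar factor, where $\Gamma_R$ is defined with $p-\varphi$ rather than $p-\Psi$: one must verify that these genuinely agree under $\mathrm{Tr}$ on the image of $R^\times$, and that the Teichm\"uller part $\mu_R$ is annihilated consistently on both sides.
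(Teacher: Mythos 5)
Your proof is correct and takes essentially the same route as the paper's: reduce along the decomposition $K_1(R[G])\cong K_1(R[G],I(R[G]))\oplus R^\times$, then on the augmentation part combine the identity $\log\circ\det=\mathrm{tr}\circ\log$ with the Frobenius/Adams compatibilities \eqref{DET} and \eqref{TR} to match $\mathrm{Tr}\circ\Gamma_{R[G]}$ with $\Gamma_{\mathrm{Hom}}\circ\mathrm{Det}$. The only difference is one of exposition: the paper dismisses the scalar summand as ``similar but easier'' and the injectivity on $G^{ab}\times\mu_R$ as well-known, both of which you write out correctly (evaluation on linear characters and duality for $G^{ab}$).
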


\begin{proof} The injectivity being well-known we only
check the commutativity similarly as in \cite[prop.\ 4.3.25]{Sn}. Since $K_1(R[G], I(R[G]))$ and $K_1(R)$ generate $K_1(R[G])$ as
has been observed above, it suffices to check this individually on each direct summand. The case of
$K_1(R)$ being similar but easier, we assume that $a$ belongs to $K_1(R[G], I(R[G]))$ and calculate
using the definitions, \eqref{TR}, the continuity of $\rho,$ the fact that $\log$ transforms $\det$
into $\mathrm{tr}$ and \eqref{DET}:
\begin{eqnarray*}
(\mathrm{Tr}\circ \Gamma (a))(\rho)&=&\mathrm{Tr}(\frac{1}{p} (p-\Psi) \log(a))(\rho) \\
&=& \mathrm{Tr}(\log(a))(\rho) -\frac{1}{p}  F \mathrm{Tr}(\log(a)(\psi_p \rho^{F^{-1}}) \\
&=& \mathrm{tr}(\log \rho(a))-\frac{1}{p}  F \mathrm{tr} \log(\psi_p \rho^{F^{-1}}(a))\\
&=& \log (\det(\rho(a)))-\frac{1}{p} F \log (\det(\psi_p \rho^{F^{-1}}(a))\\
&=&\frac{1}{p} \{ p \log \mathrm{Det}(a)(\rho) - F\log \mathrm{Det}(a)(\psi_p \rho^{F^{-1}})\}\\
&=&\frac{1}{p}(p-\tilde{F}\psi_p) \log \mathrm{Det}(a)(\rho)\\
&=& (\Gamma_\mathrm{Hom}\circ \mathrm{Det} (a))(\rho).
\end{eqnarray*}
\end{proof}

\begin{Rm}
The case $p=2$ should be treated separately, since
$\mathrm{Log}(R^{\times})=\mathrm{Log}(1+2R)=4R+2(1-\varphi)R\cong 2R$, so that we have to replace $R[\mathcal{C}_G]$ by $2R[\mathcal{C}_G]$ in the exact sequence \eqref{eq:41}, or, if we keep $R[\mathcal{C}_G]$ in \eqref{eq:41}, then we get a finite cokernel, which we denote by $\mu=\left\langle -1\right\rangle$. Doing required corrections we can proof
Corollaries \ref{cr:31} and \ref{cr:32} also in this case. Note, that the finite cokernel $\mu$ also appears in the first row of the commutative diagram in the proof of Corollary \ref{cr:31} (see \cite[thm.\ 6.6]{O.}).
\end{Rm}

\begin{Le}
The statement (i) of Theorem \ref{th:15} is true also for $R$ being the ring of Witt vectors of a $p$-closed algebraic extension $\kappa$ of $\mathbb{F}_{p}$.
\end{Le}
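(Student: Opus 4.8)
The plan is to reduce the assertion to a vanishing statement. By Corollary \ref{cr:32} we already know $SK_{1}(R[G])=1$ for $R=W(\kappa)$, so $i_{*}:SK_{1}(R[G])\to SK_{1}(S[G])$ has trivial source and is an isomorphism if and only if $SK_{1}(S[G])=1$. Thus the entire content is to prove $SK_{1}(S[G])=1$ for $S=\mathcal{O}_{L}$, where $L/K$ is a finite totally ramified extension of $K=\mathrm{Frac}(W(\kappa))$; here $S$ is again a complete discrete valuation ring with residue field $\kappa$, hence $p$-closed. I would emphasise at the outset that one cannot simply rerun the logarithmic proof of Corollary \ref{cr:32}, because a ramified $S$ carries in general no lift of Frobenius: such an $F$ would have to carry a uniformiser to a root, lying in $S$, of the $\varphi$-twisted Eisenstein polynomial, and this need not exist once $\varphi$ moves the coefficients, so the homomorphism $\Gamma_{S[G]}$ is unavailable. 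The strategy is therefore to descend to finite residue fields and then to climb back along the $p$-adic completion.

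For the descent, write $L=K(\pi)$ with $\pi$ a root of an Eisenstein polynomial $f\in W(\kappa)[x]$; its finitely many coefficients already lie in $W(\kappa_{0})$ for some finite subfield $\kappa_{0}\subseteq\kappa$. Writing $\kappa=\bigcup_{\lambda}\kappa_{\lambda}$ as the directed union of its finite subfields containing $\kappa_{0}$ and putting $S_{\lambda}:=W(\kappa_{\lambda})[x]/(f)$, each $S_{\lambda}$ is the ring of integers of a finite totally ramified extension $L_{\lambda}$ of $K_{\lambda}=\mathrm{Frac}(W(\kappa_{\lambda}))$ with finite residue field $\kappa_{\lambda}$. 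Their union $\mathcal{O}_{L'}:=\bigcup_{\lambda}S_{\lambda}$ is the ring of integers of the infinite algebraic extension $L'=\bigcup_{\lambda}L_{\lambda}$ of $\mathbb{Q}_{p}$, whose maximal unramified subextension has residue field $\kappa$; the $p$-closedness of $\kappa$ is exactly the statement $p^{\infty}\mid[(L')^{0}:\mathbb{Q}_{p}]$. Corollary \ref{cr:2} then yields $SK_{1}(\mathcal{O}_{L'}[G])=1$. Concretely this is the colimit $\varinjlim_{\lambda}SK_{1}(S_{\lambda}[G])$, each term being $\cong\mathrm{H}_{2}(G)/\mathrm{H}^{ab}_{2}(G)$ by Theorem \ref{th:15} and its part (i), while the transition maps, induced by the unramified extensions $L_{\lambda'}/L_{\lambda}$, are multiplication by the residue degrees $[\kappa_{\lambda'}:\kappa_{\lambda}]$ by Theorem \ref{th:21}; as these degrees are cofinally divisible by arbitrarily high powers of $p$, the colimit of this finite $p$-group vanishes.

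The remaining and principal difficulty is the completion step, passing from $\mathcal{O}_{L'}$ to its $p$-adic completion $S=\widehat{\mathcal{O}_{L'}}$. Here I would invoke continuity of $SK_{1}$ under $p$-adic completion, namely $SK_{1}(S[G])\cong\varprojlim_{n}SK_{1}((S/p^{n}S)[G])$ --- the $SK_{1}$-analogue of the $K_{1}$-continuity already used in Corollary \ref{cr:32} via \cite[prop.\ 1.3]{CPT} and Oliver's continuity results in \cite{O.}. Since completion does not alter the finite quotients, $S/p^{n}S=\mathcal{O}_{L'}/p^{n}\mathcal{O}_{L'}=\varinjlim_{\lambda}S_{\lambda}/p^{n}S_{\lambda}$, and as $SK_{1}$ commutes with direct limits (Remark \ref{rm:17}) one gets $SK_{1}((S/p^{n}S)[G])=\varinjlim_{\lambda}SK_{1}((S_{\lambda}/p^{n}S_{\lambda})[G])$; the same multiplication-by-$p$-power transition maps make each of these colimits vanish, whence $SK_{1}(S[G])=\varprojlim_{n}1=1$ and $i_{*}$ is an isomorphism. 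The delicate point, which I expect to demand the most care, is exactly this $SK_{1}$-continuity: in contrast to Corollary \ref{cr:32}, where the limit description of $K_{1}$ was obtained a posteriori from $SK_{1}=1$, here it must be secured independently, which comes down to the vanishing of the relevant $\varprojlim^{1}$-terms --- and this should follow from the finiteness of the groups $SK_{1}(S_{\lambda}[G])$ together with a Mittag--Leffler argument.
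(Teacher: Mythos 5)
Your opening reduction coincides with the paper's: by Corollary \ref{cr:32} the source $SK_1(R[G])$ is trivial, so the whole content is the surjectivity of $i_*$, i.e.\ the vanishing $SK_1(S[G])=1$. But your route to that vanishing collapses at exactly the step you yourself single out as delicate, the ``completion step'', and the failure is not a $\varprojlim^1$/Mittag--Leffler issue that more care could repair. The continuity statement you invoke, $SK_1(S[G])\cong\varprojlim_n SK_1((S/p^nS)[G])$, is not the $SK_1$-analogue of the continuity used in Corollary \ref{cr:32}; it is essentially its negation. The result that is actually available (\cite[prop.\ 1.3]{CPT}, as used in Corollary \ref{cr:32}, and likewise Oliver's continuity theorems) says that the natural map $K_1(S[G])\to\varprojlim_n K_1((S/p^nS)[G])$ is surjective with kernel precisely $SK_1(S[G])$: the finite quotients $S/p^nS$ compute the $\mathrm{Det}$-part and are blind to $SK_1$. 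This is exactly why the paper needs $SK_1(R[G])=1$ as an \emph{input} to conclude $K_1(R[G])\cong\varprojlim_n K_1(R/p^nR[G])$, and not conversely. Note also that $SK_1((S/p^nS)[G])$ is not even defined in the paper's framework, since $\mathrm{Det}$ requires coefficients in a characteristic-zero domain, and your subsequent claim that the transition maps on these undefined groups are again ``multiplication by residue degrees'' has no support (Theorem \ref{th:21} concerns the $p$-adic orders $S_{\lambda}[G]$, not their finite quotients). There is a structural sanity check confirming the gap: if the vanishing of $SK_1$ could be pushed through $p$-adic completion from the union $\bigcup_{\lambda}W(\kappa_{\lambda})$, then Corollary \ref{cr:32} itself would be an immediate consequence of Corollary \ref{cr:2}, and the paper's entire logarithm machinery (Lemma \ref{sur}, the homomorphism $\Gamma_{R[G]}$, the induction on $|G|$, the $\mathrm{Tr}/\mathrm{Det}$ diagram) would be superfluous. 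That machinery exists precisely because completion is the real obstacle; your proposal assumes it away.

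A secondary, fixable point: the coefficients of an Eisenstein polynomial $f\in W(\kappa)[x]$ need \emph{not} lie in $W(\kappa_0)$ for any finite subfield $\kappa_0\subseteq\kappa$. Since $\kappa$ is $p$-closed it is infinite, and $W(\kappa)$ is strictly larger than $\bigcup_{\lambda}W(\kappa_{\lambda})$ (it is its $p$-adic completion); one must first replace $f$ by a $p$-adically close Eisenstein polynomial with coefficients in the union, using Krasner's lemma, before your descent to finite residue fields makes sense. For comparison, the paper's proof is short and avoids both problems: injectivity of $i_*$ is trivial because $SK_1(R[G])=1$ by Corollary \ref{cr:32}, and surjectivity for a totally ramified extension follows by generalizing Proposition 15 of \cite{O. 2} to the present $R$ --- a direct argument carried out at the level of the complete rings themselves, with no passage from finite residue levels through a completion. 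If you want to salvage your strategy, it is exactly such a surjectivity (or vanishing) statement for the completed ring that you would have to prove; the colimit computation over finite subfields, however correct, cannot supply it.
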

\begin{proof}
The injectivity is obvious $SK_{1}(R[G])$ being trivial (see Corollary \ref{cr:32}) and the surjectivity follows from the generalized Proposition 15 in \cite{O. 2}.
\end{proof}

Corollary \ref{cr:32} and the lemma above imply
\begin{Cr} \label{cr:43}
$SK_{1}(S[G])=1$ for any totally ramified  integral extension $S$ of $R$, where $R$ is the ring of
Witt vectors of a $p$-closed algebraic extension $\kappa$ of $\mathbb{F}_{p}$.
\end{Cr}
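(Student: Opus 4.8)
The plan is to read this off from the two results it explicitly invokes: the vanishing $SK_1(R[G])=1$ of Corollary \ref{cr:32}, and the extension of part (i) of Theorem \ref{th:15} to $R=W(\kappa)$ furnished by the preceding lemma. First I would dispose of the case where $S/R$ is a \emph{finite} totally ramified extension. For such $S$ the lemma guarantees that the inclusion-induced map
\[ i_*: SK_1(R[G]) \longrightarrow SK_1(S[G]) \]
is an isomorphism. Since its source is trivial by Corollary \ref{cr:32}, its target is trivial as well, so $SK_1(S[G])=1$ in this case.

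Second, I would reduce the general (possibly infinite) integral totally ramified extension to the finite case by a direct-limit argument of the kind already used in Remark \ref{rm:5}. Setting $K=\mathrm{Frac}(R)$ and $L=\mathrm{Frac}(S)$, one writes $L$ as the union of its finite subextensions $L_i/K$ and correspondingly $S=\bigcup_i S_i$ as the union of the associated rings of integers $S_i=\mathcal{O}_{L_i}$. Each $S_i/R$ is again totally ramified, since every subextension of a totally ramified extension is totally ramified (the residue field $\kappa$ does not grow). By the finite case just treated we get $SK_1(S_i[G])=1$ for all $i$, and since $SK_1$ commutes with direct limits (Remark \ref{rm:17}) this yields
\[ SK_1(S[G]) = \varinjlim_i SK_1(S_i[G]) = 1. \]

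There is no genuine obstacle here, as the substance has already been packaged into Corollary \ref{cr:32} and the lemma, so the corollary is essentially formal. The only two points needing a word of care are that the finite subextensions $S_i$ are indeed totally ramified over $R$ (so that the lemma is applicable to each of them), and that the transition maps of the direct system $\{S_i[G]\}$ are precisely the inclusion-induced maps to which Remark \ref{rm:17} applies, so that passage to the limit is legitimate; both are immediate.
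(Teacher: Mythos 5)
Your proof is correct and is essentially the argument the paper intends: the paper derives Corollary \ref{cr:43} exactly from Corollary \ref{cr:32} together with the lemma extending Theorem \ref{th:15}(i) (giving the finite totally ramified case), with the passage to infinite extensions handled by the same direct-limit mechanism (Remark \ref{rm:17}) that you invoke. The two points of care you flag (finite subextensions remain totally ramified, and the transition maps are the inclusion-induced ones) are precisely what the paper leaves implicit, so nothing is missing.
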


Finally, we want to generalize Corollaries \ref{cr:2} and \ref{cr:43} to the case of an arbitrary finite group $G$. For this we need

\begin{Th} \label{th:32}
Fix a prime $p$ and a Dedekind domain $R$ with field of fractions $K$, such that
$\mathbb{Q}_{p}\subseteq K \subseteq \mathbb{C}_{p}$. For any finite group $G$, let
$g_{1},\ldots,g_{k}$ be $K$-conjugacy class representatives for elements in $G$ of order
$n_{i}=\mathrm{ord}(g_{i} )$ prime to $p$, and set
\begin{equation*}
N_{i}=N^{K}_{G}(g_{i})=\{x\in G:\;xg_{i}x^{-1}=g^{a}_{i} \text{, some } a\in
Gal(K(\zeta_{n_{i}})/K)\}\;\;
\end{equation*}
 and $Z_{i}=\mathcal{C}_{G}(g_{i})$. Then $ SK_{1}(R[G])$ is computable for induction from $p$-elementary subgroups of $G$ and there is an isomorphism
\begin{equation*}
 SK_{1}(R[G])\cong \bigoplus^{k}_{i=1} \mathrm{H}_{0}\big(N_{i}/Z_{i};\underset{H\in \mathcal{P}(Z_{i})}{\varinjlim} SK_{1}(R[\zeta_{n_{i}}][H])\big),
\end{equation*}
where $\mathcal{P}(Z_{i})$ is the set of $p$-subgroups of $Z_i$ and $R[\zeta_{n_{i}}]$ denotes the
integral closure of $R$ in $K(\zeta_{n_{i}})$.
\end{Th}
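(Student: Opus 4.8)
The plan is to follow Oliver's proof of the analogous structure theorem (the arguments behind Theorem 12.10 in \cite{O.} together with the induction theory of \cite{O. 3}) and to check that each ingredient survives the passage to a general Dedekind domain $R$ with $\mathbb{Q}_p\subseteq K\subseteq\mathbb{C}_p$. The argument proceeds in two layers: an induction theorem reducing the computation of $SK_1(R[G])$ to $p$-elementary subgroups, followed by an explicit evaluation of the resulting colimit in the $p$-elementary case, which yields the displayed direct sum.

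For the first layer I would regard $G\mapsto SK_1(R[G])$ as a module over the Frobenius functor of virtual $\mathbb{Q}_p$-representations, so that the Mackey-functor formalism and Dress's induction theorem become available. Dress's theorem requires the functor to take $p$-primary torsion values; this is the only point at which the generality of $R$ is really felt. For $R$ the ring of integers of a finite extension it is Oliver's Theorem 3.14 (which the paper has already noted holds for arbitrary finite $G$), and for the remaining $R$ I would obtain it by writing $K$ as the union of its finite subextensions and using that $SK_1$ commutes with the corresponding direct limits (Remark \ref{rm:17}); the completed cases, such as $\widehat{\mathbb{Z}_p^{ur}}$, are handled by running Oliver's logarithmic and localisation estimates directly over $R$, which is legitimate because $R$ is a discrete valuation ring satisfying the Hypothesis of \cite{CPT}. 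This establishes that $SK_1(R[-])$ is computable for induction from $p$-elementary subgroups.

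For the second layer I would evaluate the general formula for a functor computable from $p$-elementary subgroups. A $p$-elementary group has the form $C\rtimes P$ with $C$ cyclic of order prime to $p$ and $P$ a $p$-group, so the colimit over $p$-elementary subgroups splits according to the $p$-regular part, i.e.\ according to the $K$-conjugacy classes $[g_i]$ of $p$-regular elements. Exactly as in the Det-part (the decomposition \eqref{eq:3} and Theorem \ref{th:4}), base change along the cyclic factor $C$ produces the integral closure $R[\zeta_{n_i}]$, the components being indexed by the $Gal(K(\zeta_{n_i})/K)$-orbits of characters of $C$ --- this is precisely why $K$-conjugacy, rather than ordinary conjugacy, governs the indexing set. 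The remaining $p$-part is then controlled by $SK_1(R[\zeta_{n_i}][H])$ as $H$ ranges over the $p$-subgroups of the centraliser $Z_i=\mathcal{C}_G(g_i)$, which accounts for the inner colimit $\varinjlim_{H\in\mathcal{P}(Z_i)}$. Finally $N_i/Z_i$ acts through conjugation together with the induced Galois action on $\zeta_{n_i}$, and passing to this action is the formation of the coinvariants $H_0(N_i/Z_i;-)$. Summing the contributions over the classes $[g_i]$ gives the asserted isomorphism.

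The main obstacle is the first layer: verifying that Dress's induction theorem genuinely applies to our $R$, i.e.\ that $SK_1(R[G])$ is $p$-primary torsion and carries the requisite Green-module structure, without the finiteness of the residue field that Oliver exploits. For the union cases this is resolved cleanly by the fact that $SK_1$ commutes with direct limits (Remark \ref{rm:17}), reducing everything to Oliver's finite-extension results; for the completed cases one must re-examine Oliver's estimates over the discrete valuation ring $R$ at hand. Once computability is in place the $p$-elementary evaluation is essentially formal and runs parallel to the Det-part computations of Theorems \ref{th:4} and \ref{th:5} already carried out in this paper; combined with the vanishing of $SK_1$ for $p$-groups over $\widehat{\mathbb{Z}_p^{ur}}$ and its totally ramified extensions (Corollaries \ref{cr:32} and \ref{cr:43}), the theorem then feeds directly into the global vanishing statement announced in the introduction.
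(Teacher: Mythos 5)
Your overall route coincides with the paper's in substance, but you have reconstructed what the paper simply quotes: the paper's entire proof of Theorem \ref{th:32} is the citation of Theorems 11.8 and 12.5 of \cite{O.}, and your two ``layers'' are exactly the content of those two results --- Theorem 11.8 is the Dress-induction computability statement, Theorem 12.5 the explicit evaluation of the colimit as the displayed direct sum. The point you appear not to have registered is that Oliver formulates both theorems for precisely the class of coefficient rings in the statement: a Dedekind domain of characteristic zero whose field of fractions lies between $\mathbb{Q}_p$ and $\mathbb{C}_p$. The hypothesis of Theorem \ref{th:32} is lifted from \cite{O.}; it is not a generalization invented here, which is why a bare citation is a legitimate proof and no re-examination of Oliver's estimates is needed.

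This matters because, read as a self-contained argument, your first layer has a genuine hole exactly at the case the paper needs most. For the completed rings --- $W(\kappa)$ with $\kappa$ infinite, i.e.\ $\widehat{\mathbb{Z}_p^{ur}}$ and its finite totally ramified extensions, which is what Corollary \ref{cr:33} and the exact sequence in the introduction rest on --- you first assert that rerunning Oliver's arguments is ``legitimate because $R$ is a discrete valuation ring satisfying the Hypothesis of \cite{CPT}'', and later concede that this is ``the main obstacle'' which ``must be re-examined''. Both statements cannot stand together, and the first is in any case beside the point: the Hypothesis of \cite{CPT} (integral domain, $p$-adic completeness, Frobenius lift) is tailored to the integral logarithm and the Det-part, whereas Dress induction requires that $SK_1(R[-])$ carry the requisite Frobenius/Green-module structure and take $p$-primary torsion values, and the torsion statement is exactly where Oliver's finiteness arguments (e.g.\ his Theorem 3.14, which the paper invokes only for rings of integers in finite extensions) use finiteness of the residue field. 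Your direct-limit reduction cannot repair this either: $SK_1$ commutes with direct limits, not with completions, and $\widehat{\mathbb{Z}_p^{ur}}$ is not a union of rings of integers of finite extensions of $\mathbb{Q}_p$. So without quoting Oliver's theorems at their actual level of generality, the completed case remains unproved in your write-up. A smaller slip worth correcting: $p$-elementary means $C\times P$, a direct product; the semidirect products $C\rtimes P$ you describe are the $K$-$p$-elementary (hyperelementary) groups, and it is the passage from these, via the twisted-group-ring analysis as in Theorem \ref{th:4}, down to $p$-subgroups of centralizers that produces the coinvariants $\mathrm{H}_{0}(N_{i}/Z_{i};-)$ in the formula.
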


\begin{proof}
See \cite[thm.\ 11.8 and thm.\ 12.5 ]{O.}
\end{proof}

\begin{Cr} \label{cr:33}   Let $G$ be an arbitrary finite group. Let $S$ be $\mathcal{O}_{M}$, with $M$ as in Corollary \ref{cr:2}, but of finite absolute ramification index over $\mathbb{Q}_{p}$ (resp. $S$ be a finite totally ramified extension of $R$, where $R$ is the ring of Witt vectors of a $p$-closed algebraic extension $\kappa$ of $\mathbb{F}_{p}$), then $SK_{1}(S[G])=1$.
\end{Cr}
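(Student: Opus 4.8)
The plan is to deduce the statement for an arbitrary finite group $G$ from the $p$-group results in Corollary \ref{cr:2} and Corollary \ref{cr:43} by means of Oliver's induction formula in Theorem \ref{th:32}. To invoke that theorem I must first know that $S$ is a Dedekind domain whose fraction field $K$ lies between $\mathbb{Q}_p$ and $\mathbb{C}_p$. In the case where $S$ is a finite totally ramified extension of $R=W(\kappa)$, both $R$ and $S$ are complete discrete valuation rings, so this is clear. In the case $S=\mathcal{O}_M$ the hypothesis of finite absolute ramification index is exactly what is needed: the unique extension of the $p$-adic valuation to the algebraic field $M$ then has value group isomorphic to $\mathbb{Z}$, so $\mathcal{O}_M$ is a (possibly incomplete) discrete valuation ring, hence Dedekind, with $\mathrm{Frac}(\mathcal{O}_M)=M\subseteq\mathbb{C}_p$.

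With $R:=S$ and $K:=\mathrm{Frac}(S)$, Theorem \ref{th:32} then gives
\[SK_1(S[G])\cong\bigoplus_{i=1}^{k}\mathrm{H}_0\Big(N_i/Z_i;\,\varinjlim_{H\in\mathcal{P}(Z_i)}SK_1\big(S[\zeta_{n_i}][H]\big)\Big),\]
where each $H\in\mathcal{P}(Z_i)$ is a finite $p$-group and $S[\zeta_{n_i}]$ denotes the integral closure of $S$ in $K(\zeta_{n_i})$. Hence it suffices to show $SK_1(S[\zeta_{n_i}][H])=1$ for every such $p$-group $H$: the direct limits then consist of trivial groups, their $\mathrm{H}_0$ over $N_i/Z_i$ vanishes, and the whole sum collapses to give $SK_1(S[G])=1$.

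The heart of the argument, and the step I expect to be the main obstacle, is therefore to check that the coefficient ring $S[\zeta_{n_i}]$ still falls under the hypotheses of the appropriate $p$-group corollary. Crucially, $n_i$ is prime to $p$, so $K(\zeta_{n_i})/K$ is unramified. In the first case $S[\zeta_{n_i}]=\mathcal{O}_{M(\zeta_{n_i})}$, and $M(\zeta_{n_i})$ is again an infinite algebraic extension of $\mathbb{Q}_p$ whose maximal unramified subextension contains $M^0$; consequently $p^\infty$ still divides its degree over $\mathbb{Q}_p$, and Corollary \ref{cr:2} yields $SK_1(S[\zeta_{n_i}][H])=1$. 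In the second case $S[\zeta_{n_i}]$ is a finite totally ramified extension of the unramified ring $R[\zeta_{n_i}]=W(\kappa(\bar{\zeta}_{n_i}))$, so I must verify that the residue field $\kappa(\bar{\zeta}_{n_i})$ is again $p$-closed.

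For this I would argue that $p$-closedness is inherited by the algebraic extension $\kappa(\bar{\zeta}_{n_i})$ of $\kappa$: the absolute Galois group of an algebraic extension of $\mathbb{F}_p$ is a closed subgroup of $\mathrm{Gal}(\overline{\mathbb{F}_p}/\mathbb{F}_p)\cong\widehat{\mathbb{Z}}$, and $p$-closedness of $\kappa$ means this group has order prime to $p$. Passing from $\kappa$ to the extension $\kappa(\bar{\zeta}_{n_i})$ replaces it by a closed subgroup, whose order remains prime to $p$; hence $\kappa(\bar{\zeta}_{n_i})$ is $p$-closed, $R[\zeta_{n_i}]=W(\kappa(\bar{\zeta}_{n_i}))$ is a Witt-vector ring of a $p$-closed field, and $S[\zeta_{n_i}]$ is totally ramified over it. Corollary \ref{cr:43} then gives $SK_1(S[\zeta_{n_i}][H])=1$ for every $p$-group $H$. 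Substituting into the displayed decomposition finishes the proof; apart from the two hypothesis checks just described (discreteness of the valuation in the first case, inheritance of $p$-closedness in the second) the argument is a direct appeal to Theorem \ref{th:32}, Corollary \ref{cr:2} and Corollary \ref{cr:43}.
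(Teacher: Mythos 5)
Your proposal is correct and follows essentially the same route as the paper, which simply invokes Theorem \ref{th:32} together with Corollaries \ref{cr:2} and \ref{cr:43}, noting that the rings $S[\zeta_{n_i}]$ are finite unramified extensions of $S$. Your additional verifications (that finite absolute ramification index makes $\mathcal{O}_M$ a discrete valuation ring, hence Dedekind, and that $p$-closedness passes to $\kappa(\bar{\zeta}_{n_i})$) are exactly the hypothesis checks the paper leaves implicit.
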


\begin{proof}
Use Corollaries \ref{cr:2}, \ref{cr:43} and Theorem \ref{th:32}. Note also, that $R[\zeta_{n_{i}}]$
is a finite unramified extensions of $R$.
\end{proof}

\begin{flushleft}
\textbf{End of the $ SK_{1}$-part.}
\end{flushleft}

  For a finite group $G$ and $S$ being the ring of integers of either an
arbitrary tamely ramified extension $L$ of $\mathbb{Q}_p$ (type 1) or the completion of a tamely
ramified extension $L$ of finite absolute ramification index over $\mathbb{Q}_{p}$, whose residue
field is p-closed (type 2). Let $R=S^\Delta$ be the fixed ring of $S$, where $\Delta$ is an open
subgroup of $Gal(L/\mathbb{Q}_p)$ containing the inertia group. We write $K$ and $C$ for the kernel
and cokernel of the map \[i_*:SK_1(R[G])\to SK_1(S[G])^\Delta,\] respectively. Note that they are
finite $p$-primary abelian groups. By $K_1(R[G])_\mathbb{Q}$ we denote rational $K$-groups
$K_1(R[G])\otimes_\mathbb{Z}\mathbb{Q}$. The Snake-lemma, Theorem  \ref{th:34} and the
$SK_{1}$-part
 imply immediately

\begin{Th}\label{prop:4} Let $p^n$ be the $p$-part of $[L:L^{\Delta}]$ $(0\leq n\leq \infty)$.
\begin{enumerate}
\item The following sequence is exact \[\xymatrix@C=0.5cm{
  1 \ar[r] & K \ar[rr]^{ } && K_1(R[G]) \ar[rr]^{i_* } && K_1(S[G])^{\Delta} \ar[rr]^{ } && C \ar[r] & 1
  }\] and induces \[K_1(R[G])_\mathbb{Q} \cong K_1(S[G])_\mathbb{Q}^\Delta.\]

\item If $S$ is of type 1 and $n=0$, then
\[K=1\;\;\;\text{ and }\;\;\;C=1.\]

\item If $S$ is either of type 2 or of type 1 with $n=\infty$, then we have
\[K\cong SK_1(R[G])\;\;\;\text{ and }\;\;\;C=1 .\]

\item Let $G$ be a $p$-group and $S$ be of type 1 with $0 < n < \infty$, then
 \[K\cong SK_1(R[G])[p^n]\;\;\;\text{ and }\;\;\;C\cong SK_1(R[G])/p^n.\]
\end{enumerate}
\end{Th}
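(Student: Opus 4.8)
The plan is to apply the Snake-lemma to two short exact sequences of type \eqref{eq:2}, one for $R[G]$ and the $\Delta$-invariant version of the other for $S[G]$, linked by the base-change map $i_*$. First I would record the top row $1\to SK_1(R[G])\to K_1(R[G])\xrightarrow{\mathrm{Det}}\mathrm{Det}(R[G]^\times)\to 1$ and, below it, the left-exact sequence $1\to SK_1(S[G])^\Delta\to K_1(S[G])^\Delta\xrightarrow{\mathrm{Det}}\mathrm{Det}(S[G]^\times)^\Delta$ obtained by applying $(-)^\Delta$ to \eqref{eq:2}. To see that the bottom row is in fact short exact I would note that the composite $K_1(R[G])\twoheadrightarrow\mathrm{Det}(R[G]^\times)\xrightarrow{\sim}\mathrm{Det}(S[G]^\times)^\Delta$, whose second arrow is the isomorphism of Theorem \ref{th:34}, is surjective and factors through $K_1(S[G])^\Delta$. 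Since the right-hand vertical map is exactly the isomorphism of Theorem \ref{th:34}, its kernel and cokernel vanish, so the Snake-lemma identifies $\ker i_*$ on $K_1$ with $\ker i_*$ on $SK_1$ and the corresponding cokernels; this yields precisely the four-term sequence $1\to K\to K_1(R[G])\xrightarrow{i_*}K_1(S[G])^\Delta\to C\to 1$ of part (1). Since $K$ and $C$ are finite $p$-groups (as recorded in the setup), tensoring this sequence with the flat $\mathbb{Z}$-module $\mathbb{Q}$ annihilates them, and as $(-)^\Delta$ commutes with $-\otimes_{\mathbb{Z}}\mathbb{Q}$ one obtains the rational isomorphism $K_1(R[G])_\mathbb{Q}\cong K_1(S[G])_\mathbb{Q}^\Delta$.

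It then remains to compute $K=\ker i_*$ and $C=\mathrm{coker}\,i_*$ for the map $i_*:SK_1(R[G])\to SK_1(S[G])^\Delta$, and here everything reduces to the $SK_1$-part. Because $SK_1$ carries the trivial Galois action, we have $SK_1(S[G])^\Delta=SK_1(S[G])$. I would use that $\Delta$, being open in the compact group $Gal(L/\mathbb{Q}_p)$, has finite index, so $K=L^\Delta$ is a finite extension of $\mathbb{Q}_p$; since $\Delta$ contains the inertia group, $K$ is moreover unramified, and $SK_1(R[G])$ is a genuine finite $p$-group by Theorem \ref{th:15}. Factoring $L/K$ into its maximal unramified subextension $K'/K$ of residue degree $f$ and the totally tamely ramified part $L/K'$ of degree $e$ prime to $p$, the totally ramified step induces an isomorphism on $SK_1$ by Theorem \ref{th:21}(i) (resp. Theorem \ref{th:24}(i)), while for $p$-groups the unramified step corresponds to multiplication by $f$ on $SK_1(R[G])$ by Theorem \ref{th:21}. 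Note that $p^n$ is the $p$-part of $f$, since $e$ is prime to $p$.

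Parts (2)--(4) follow by inspecting this map. For part (2), $n=0$ forces $p\nmid f$, so for general $G$ the equality $i_*:SK_1(R[G])\cong SK_1(S[G])$ follows from the cases (i)--(ii) of Corollary \ref{cr:3} (and Remark \ref{rm:23} for infinite $L$), whence $K=C=1$. For part (3) one has $SK_1(S[G])=1$: this is Corollary \ref{cr:33} in the type 2 case, while for type 1 with $n=\infty$ it follows since $p^\infty$ then divides the residue degree of $L$, reducing via Theorem \ref{th:32} to the $p$-group case where $SK_1$ vanishes by Corollary \ref{cr:2}; hence $K\cong SK_1(R[G])$ and $C=1$. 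For part (4) I would pass to the direct limit $SK_1(S[G])=\varinjlim_i SK_1(\mathcal{O}_{L_i}[G])$ over the finite subextensions $L_i/K$ (Remarks \ref{rm:5}, \ref{rm:17}); the transition maps are multiplication by the residue degrees $f_i$, whose $p$-parts increase to the finite value $p^n$ and then stabilize, so all later transitions are prime-to-$p$ multiplications and hence automorphisms of the finite $p$-group. Consequently the structure map $i_*$ is, up to an automorphism, multiplication by $p^n$ on $SK_1(R[G])$, giving $K\cong SK_1(R[G])[p^n]$ and $C\cong SK_1(R[G])/p^n$.

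The main obstacle I expect is the bookkeeping in the infinite cases (parts (3) and (4)): one must verify carefully that the $p$-part of the residue degree of $L/K$ controls the colimit of the multiplication-by-$f_i$ system on the finite $p$-group $SK_1(R[G])$, and in particular that for $0<n<\infty$ this colimit is computed by a single multiplication by $p^n$ up to an automorphism. The restriction of part (4) to $p$-groups is essential, since for general $G$ Theorem \ref{th:24} provides only a monomorphism in the unramified prime-to-$p$ situation and $trf$ is merely surjective (Remark \ref{rm:22}), so the precise kernel and cokernel cannot be pinned down. The remaining points---surjectivity of the bottom row and the commutation of $(-)^\Delta$ with $-\otimes_{\mathbb{Z}}\mathbb{Q}$ for the possibly infinite profinite group $\Delta$---are routine.
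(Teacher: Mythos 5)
Your proposal is correct and follows essentially the same route as the paper, whose entire proof is the single sentence ``The Snake-lemma, Theorem \ref{th:34} and the $SK_{1}$-part imply immediately'': you apply the Snake lemma to the sequence \eqref{eq:2} for $R[G]$ and to the $\Delta$-invariants of the corresponding sequence for $S[G]$ (with surjectivity of the bottom row and the right-hand vertical isomorphism both supplied by Theorem \ref{th:34}), and then you identify $K$ and $C$ via the $SK_1$-part (Theorem \ref{th:21}, Corollaries \ref{cr:2}, \ref{cr:3}, \ref{cr:33}, Remark \ref{rm:23}) exactly as intended, including the multiplication-by-$p^n$ analysis in case (4). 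The only small caveat: the rational isomorphism is most naturally read as $K_1(R[G])_\mathbb{Q}\cong (K_1(S[G])^{\Delta})_\mathbb{Q}$, which follows directly from the four-term sequence, so your extra claim that $(-)^{\Delta}$ commutes with $-\otimes_{\mathbb{Z}}\mathbb{Q}$ for the profinite group $\Delta$ is unnecessary (and not obvious as stated).
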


\section{The case of residue class fields.}

\quad Let $\lambda$ be an arbitrary (not necessary finite) Galois extension of $\mathbb{F}_{p}$ and
$G$ be a finite group. Let $\phi$ denote the Frobenius automorphism on $\lambda$, which takes $x\in
\lambda$ to $x^{p}$, then $Gal(\lambda/\mathbb{F}_{p})=\overline{\left\langle \phi\right\rangle}$.
Moreover, if $\mathbb{F}_{p^{n}}\subset\lambda$, then
$\mathbb{F}_{p^{n}}=\lambda^{\overline{\left\langle \phi^{n}\right\rangle}}$. We fix such an $n,$
set $\kappa=\mathbb{F}_{p^{n}}$ and $\Delta=\overline{\left\langle \phi^{n}\right\rangle}$.  We are
going to prove the following

\begin{Th} \label{th:10}
With the notation as above, we have an exact sequence

\[\xymatrix@C=0.5cm{
  1 \ar[r] & K \ar[rr]^{ } && {K_{1}(\kappa[G])} \ar[rr]^{i_* } && {K_{1}(\lambda[G])^{\Delta}} \ar[rr]^{ } && C \ar[r] & 1,
  }\]
which induces
\begin{equation*}
K_{1}(\kappa[G])_\mathbb{Q}\cong K_{1}(\lambda[G])^{\Delta}_\mathbb{Q},
\end{equation*}
where $K$ and $C$ are as in Proposition \ref{prop:4} for $R$ and $S$ the unique unramified
extensions of $\mathbb{Z}_p$ lifting $\kappa$ and $\lambda,$ respectively. As usual  $\phi$ (resp.
$\phi^{n}$) acts on the $K_{1}$-groups coefficientwise.
\end{Th}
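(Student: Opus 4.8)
The plan is to prove Theorem~\ref{th:10} by transporting the descent analysis of Theorem~\ref{prop:4} from the unramified rings $R=W(\kappa)$ and $S=W(\lambda)$ down to their residue fields $\kappa$ and $\lambda$ via reduction modulo $p$. Since $\kappa=\mathbb{F}_{p^{n}}$ is finite, $R$ is the ring of integers of the unramified extension of degree $n$ and $S$ is the unramified lift with residue field $\lambda$, on which $\Delta=\overline{\langle\phi^{n}\rangle}=Gal(L^{0}/K)$ acts coefficientwise, fixing $R$ and $\kappa$. As everywhere in the paper I would first reduce to $G$ a finite $p$-group by the usual induction over $\mathbb{Q}$-$p$- and $\mathbb{Q}$-$l$-elementary subgroups (Theorems~\ref{th:5}--\ref{th:8}, and Theorem~\ref{th:32} for the $SK_{1}$-part), and then lift to arbitrary finite $G$.

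First I would set up the reduction sequences. Because $R[G]$ and $S[G]$ are semilocal with $pR[G]\subseteq rad(R[G])$ and $pS[G]\subseteq rad(S[G])$, reduction modulo $p$ is surjective on unit groups, hence on $K_{1}$ by Proposition~\ref{th:1}; as idempotents lift along a radical ideal, the relative $K_{0}$ vanishes and we obtain short exact sequences
\begin{equation*}
1\to \bar U_{R}\to K_{1}(R[G])\xrightarrow{\mathrm{red}}K_{1}(\kappa[G])\to 1,\qquad 1\to \bar U_{S}\to K_{1}(S[G])\xrightarrow{\mathrm{red}}K_{1}(\lambda[G])\to 1,
\end{equation*}
where $\bar U_{R},\bar U_{S}$ are the images of the relative groups $K_{1}(R[G],pR[G])$, $K_{1}(S[G],pS[G])$. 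Both are $i_{*}$-compatible and $\Delta$-equivariant. Taking $\Delta$-invariants of the second sequence keeps it short exact provided $\mathrm{H}^{1}(\Delta,\bar U_{S})=0$, the analogue of the cohomology computation in Corollary~\ref{cr:31}; I would deduce it from the surjectivity of $1-\varphi$ on $S$ (Lemma~\ref{sur}) together with the identification, via the integral logarithm, of $\bar U_{S}$ with a cohomologically trivial $S[\mathcal{C}_{G}]$-type module, exactly as in the proof of Corollary~\ref{cr:32}.

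Placing these two sequences in a commutative ladder with vertical maps $i_{*}$, the Snake lemma yields the six-term exact sequence
\begin{equation*}
0\to\ker(i_{*}|_{\bar U_{R}})\to K\to K'\to\mathrm{coker}(i_{*}|_{\bar U_{R}})\to C\to C'\to 0,
\end{equation*}
where $K,C$ are the (finite, $p$-primary) kernel and cokernel of $i_{*}$ on $K_{1}(R[G])$ supplied by Theorem~\ref{prop:4}, and $K',C'$ are those of $i_{*}$ on $K_{1}(\kappa[G])$. The theorem thus reduces to showing that the reduction kernels descend perfectly, i.e. that $i_{*}\colon\bar U_{R}\to\bar U_{S}^{\Delta}$ is an isomorphism: granting this, the first and fourth terms vanish, giving $K'\cong K$ and $C'\cong C$, which are precisely the groups of Proposition~\ref{prop:4}; and the rational isomorphism $K_{1}(\kappa[G])_{\mathbb{Q}}\cong K_{1}(\lambda[G])^{\Delta}_{\mathbb{Q}}$ follows at once from the finiteness of $K$ and $C$.

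The hard part is exactly this perfect descent $i_{*}\colon\bar U_{R}\xrightarrow{\sim}\bar U_{S}^{\Delta}$ (with $\mathrm{H}^{1}(\Delta,\bar U_{S})=0$), which I would attack as the Det-part was in Theorem~\ref{th:34}: through the integral logarithm one writes $\bar U_{R}$ as an extension of the reduction-kernel part of $SK_{1}(R[G])$ by the torsion-free image of $pR[\mathcal{C}_{G}]$, on which descent is governed by the surjectivity of $1-\varphi$ (Lemma~\ref{sur}) and the trace/normal-basis arguments already used for Theorem~\ref{th:34} and Corollary~\ref{cr:31}. The only genuinely delicate point is the $SK_{1}$-contribution inside $\bar U_{R}$ and $\bar U_{S}$: one must check that reduction modulo $p$ is compatible with the $SK_{1}$/$\mathrm{Det}$ decomposition underlying Theorem~\ref{prop:4}, which requires the residue-field analogues of Oliver's computations (Theorem~\ref{th:23}, Corollaries~\ref{cr:43} and \ref{cr:33}) and in particular the vanishing statements that already forced $SK_{1}(S[G])=1$ in the $p$-closed case. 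Once this compatibility is in place, the isomorphism $\bar U_{R}\cong\bar U_{S}^{\Delta}$ follows and the theorem drops out formally from the Snake sequence above; the case $p=2$ would be handled with the same correction as in the remark following \eqref{eq:41}.
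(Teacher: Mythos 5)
Your skeleton is the same as the paper's: a commutative ladder of mod-$p$ reduction sequences for $R[G]\to\kappa[G]$ and $S[G]\to\lambda[G]$, passage to $\Delta$-invariants, vanishing of the obstructing $\mathrm{H}^{1}$, and a diagram chase against the exact sequence of Theorem \ref{prop:4}. But the two facts you single out as ``the hard part'' are exactly where your proposal has gaps, and the paper resolves them differently. The paper does not work with abstract reduction kernels $\bar U_{R},\bar U_{S}$: it quotes the splitting theorem of \cite{SV}, which identifies the kernel of $K_{1}(R[G])\to K_{1}(\kappa[G])$ with the finitely generated \emph{free} module $R[\mathcal{C}_{G}]$ (and likewise $S[\mathcal{C}_{G}]$ for $S$, compatibly with $i_{*}$ and the $\Delta$-action), for an \emph{arbitrary} finite group $G$. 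With this identification, your ``perfect descent'' $\bar U_{R}\cong\bar U_{S}^{\Delta}$ is the triviality $R[\mathcal{C}_{G}]=(S[\mathcal{C}_{G}])^{\Delta}=S^{\Delta}[\mathcal{C}_{G}]$, and your worry about ``the $SK_{1}$-contribution inside $\bar U_{R}$ and $\bar U_{S}$'' is a phantom: since the kernel is torsion free, the finite group $SK_{1}(R[G])$ injects into $K_{1}(\kappa[G])$ and never enters the reduction kernel, so no residue-field analogue of Oliver's computations is needed. Conversely, without citing \cite{SV} your sketch (``via the integral logarithm\ldots exactly as in the proof of Corollary \ref{cr:32}'') amounts to re-proving that splitting theorem, which is a genuine theorem and not a routine consequence of Section 2; as written, this step is not established. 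Also, your preliminary reduction to $p$-groups via $\mathbb{Q}$-$p$- and $\mathbb{Q}$-$l$-elementary induction is unnecessary, since both the \cite{SV} sequence and Theorem \ref{prop:4} already hold for arbitrary finite $G$.

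The second, concrete error is your vanishing argument. You propose to deduce $\mathrm{H}^{1}(\Delta,\bar U_{S})=0$ from the surjectivity of $1-\varphi$ on $S$ via Lemma \ref{sur}. That lemma requires the residue field to be $p$-closed, whereas in Theorem \ref{th:10} the field $\lambda$ is an arbitrary Galois extension of $\mathbb{F}_{p}$ --- for instance finite, in which case $1-\phi^{n}$ is certainly not surjective on $S$ and your mechanism fails (note also that what is relevant is $1-\phi^{n}$, not $1-\varphi$, since $\Delta=\overline{\langle\phi^{n}\rangle}$). The vanishing $\mathrm{H}^{1}(\Delta,S[\mathcal{C}_{G}])=0$ is nevertheless true in general, but for a different reason: by \cite{FS}, $\mathcal{O}_{M_{1}}$ is cohomologically trivial for the Galois group of any finite unramified extension $M_{1}/M_{2}$ (in essence the additive normal basis theorem), and one passes to the limit over the finite subextensions of $\lambda/\kappa$ and the corresponding finite quotients of $\Delta$. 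This is what the paper does, and it is the correct replacement for your appeal to Lemma \ref{sur}. Once these two inputs are repaired --- the \cite{SV} identification of the kernels and the Furuta--Sawada vanishing --- your Snake-lemma conclusion and the rational isomorphism do follow exactly as you say.
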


\begin{proof}

From \cite{SV} we have an exact sequence
\begin{equation} \label{eq:13}
0\rightarrow \mathbb{Z}_{p}[\mathcal{C}_G]\rightarrow K_{1}(\mathbb{Z}_{p}[G])\rightarrow
K_{1}(\mathbb{F}_{p}[G])\rightarrow 1,
\end{equation}
where $\mathbb{Z}_{p}[\mathcal{C}_G]$ is a finitely generated free $\mathbb{Z}_{p}$-module over the
set of conjugacy classes in $G$.

With the same argument as in \cite{SV}, we can obtain \eqref{eq:13} for finite unramified extensions of $\mathbb{Q}_{p}$ and their rings of integers, and, since $K_{1}$ commutes with the direct limit, also for infinite unramified extensions. Using this fact we get the following commutative diagram with exact rows
\begin{equation*}
\begin{array}{ccccccccl}
& & & & 1 & & & & \\
& & & & \Big\downarrow & & & & \\
& & & & K & & & & \\
& & & & \Big\downarrow & & & & \\
0 & \rightarrow & R[\mathcal{C}_G] & \rightarrow & K_{1}(R[G]) & \rightarrow & K_{1}(\kappa[G]) & \rightarrow & 1\\
& & \Big\downarrow & & \Big\downarrow & & \Big\downarrow & & \\
0 & \rightarrow  & S[\mathcal{C}_G]^{\Delta} & \rightarrow & K_{1}(S[G])^{\Delta} & \rightarrow & K_{1}(\lambda[G])^{\Delta} & \rightarrow & \mathrm{H}^{1}(\Delta,S[\mathcal{C}_G]),\\
& & & & \Big\downarrow & & & & \\
& & & & C & & & & \\
& & & & \Big\downarrow & & & & \\
& & & & 1 & & & & \\
\end{array}
\end{equation*}
where the bottom row is the part of the long exact sequence in the group cohomology
associated to the short exact sequence of $\Delta$-modules, and $\Delta$ acts coefficientwise.

The left hand side vertical map is an isomorphism, as $R[\mathcal{C}_G]$ and $S[\mathcal{C}_G]$ are
finitely generated free $R$- and $S$-modules respectively. The middle column is exact by Prop.
\ref{prop:4}. Thus, to prove the theorem, it is enough to show, that
$\mathrm{H}^{1}(\Delta,S[\mathcal{C}_G])=1$.

From \cite{FS} we know, that $H^{1}(Gal(M_{1}/M_{2}),\mathcal{O}_{M_{1}})=1$ for every finite unramified extension
$M_{1}$ of $\mathbb{Q}_{p}$. Since $\Delta$ can be written as the inverse limit of finite groups
corresponding to the finite unramified subextensions of $S$, and $S[\mathcal{C}_G]$ as a
$\Delta$-module is isomorphic to the direct sum of copies of $S$, we get the statement above by
using standard properties of the cohomology of groups. \end{proof}


\begin{thebibliography}{11111}
\bibitem{AG} M. Auslander and O. Goldman, \textit{The Brauer group of a commutative ring}, Trans. Amer. Math. Soc. 97, 1960, pp. 367-409.
\bibitem{BV} T. Bouganis and O. Venjakob, \textit{On the non-commutative main conjecture for elliptic curves with complex multiplication}, preprint, 2010.
\bibitem{Br.} A. Brumer, \textit{Pseudocompact algebras, profinite groups and class formations}, Journal of Algebra 4, 1966, pp. 442-470.
\bibitem{CPT} T. Chinburg, G. Pappas and M. J. Taylor, \textit{K-1 of a p-adic group ring I. The determinantal image}, Journal of Algebra, Manuscript Draft, 2009.
\bibitem{CPT. 1} T. Chinburg, G. Pappas and M. J. Taylor, \textit{K-1 of a p-adic group ring II. The determinantal kernel SK-1}, to appear.
\bibitem{CR. 1} C. W. Curtis and I. Reiner, \textit{Methods of Representation Theory}, Vol 1, John Wiley and Sons, New York, 1987.
\bibitem{CR. 2} C. W. Curtis and I. Reiner, \textit{Methods of Representation Theory}, Vol. 2, John Wiley and Sons, New York, 1987.
\bibitem{Fr.} A. Froehlich, \textit{Galois module structure of algebraic integers}, Springer, Ergebnisse der Mathematik und ihrer Grenzgebiete 3. Folge, Band 1, 1983.
\bibitem{FK} T. Fukaya and K. Kato, \textit{A formulation of conjectures on p-adic zeta functions in non-commutative Iwasawa theory}, preprint, 2004.
\bibitem{FS} Y. Furuta and Y. Sawada, \textit{On the Galois cohomology group of the ring of integers in a global field and its Adele ring}, Nagoya Mathematical Journal Vol. 32, 1968, pp. 247-252.
\bibitem{L.} T.Y. Lam, \textit{A first course in noncommutative rings}, Springer, Graduate Texts in Mathematics Vol. 131, second edition, 2001.
\bibitem{Mi.} J. Milne, \textit{Etale cohomology}, Princeton: Princeton University Press, 1980.
\bibitem{NM} T. Nakayama und Y. Matsushima, \textit{Ueber die multiplikative Gruppe einer p-adischen Divisionsalgebra}, Proc. Imp. Acad. Tokyo 19, 1943, pp. 622-628.
\bibitem{NSW} J. Neukirch, A. Schmidt and K. Wingberg, \textit{Cohomology of number fields}, Springer, A Series of Comprehensive Studies in Mathematics Vol. 323, second edition, 2008.
\bibitem{O.} R. Oliver, \textit{Whitehead groups of finite groups}, London Mathematical Society, Lecture Note Series 132, 1988.
\bibitem{O. 2} R. Oliver, \textit{SK-1 for finite group rings II}, Math. Scand. 47, 1980, pp. 195-231.
\bibitem{O. 3} R. Oliver, \textit{SK-1 for finite group rings III}, Algebraic K-theory, Evanston, 1980.
\bibitem{Q.} J. Queyrut, \textit{S-Groupes des Classes d'un Ordre Arithm$\acute{e}$tique}, Journal of Algebra 76, 1982, pp. 234-260.
\bibitem{Re.} I. Reiner, \textit{Maximal orders}, Academic Press, 1975.
\bibitem{RW} J. Ritter and A. Weiss, \textit{Toward equivariant Iwasawa theory}, Springer, manuscripta math.\ 109, 2002, pp. 131--146.
\bibitem{R.} J. Rosenberg, \textit{Algebraic K-theory and Its Applications}, Springer, Graduate Texts in Mathematics 147, 1994.
\bibitem{SV} P. Schneider, O. Venjakob, \textit{A splitting for $K_1$ of completed group rings}, preprint, 2010.
\bibitem{S.} J.-P. Serre, \textit{Linear representations of finite groups}, Springer, Graduate Texts in Mathematics 42, 1977.
\bibitem{Sn} V.P.\ Snaith, \textit{Explicit Brauer Induction (with applications to algebra and
number theory)}, Cambridge studies in advanced mathematics 40, Cambridge University Press (1994).
\bibitem{T.} M. Taylor, \textit{Classgroups of Group Rings}, London Mathematical Society, Lecture Note Series 91, 1984.
\bibitem{W.} C.T.C. Wall, \textit{Norms of units in group rings}, Proc. L.M.S (3) 29, 1974, pp. 593-632.
\end{thebibliography}
\end{document}